\documentclass[a4paper,12pt]{article}

\usepackage{amsthm}
\usepackage{amsmath}
\usepackage{amssymb}
\usepackage{amsfonts}
\usepackage{latexsym}
\usepackage{mathrsfs}
\usepackage{mathtools} 

\usepackage[T1]{fontenc}
\usepackage{lmodern} 
\usepackage{bm} 

\numberwithin{equation}{section}

\allowdisplaybreaks[1]

\usepackage{enumitem}

\renewcommand{\Re}{\operatorname{Re}}



\DeclareMathOperator{\sgn}{sgn}


\newcommand{\ci}{\mathrm{i}}
\newcommand{\ce}{\mathrm{e}}
\newcommand{\cd}{\mathrm{d}}


\DeclarePairedDelimiter{\abs}{\lvert}{\rvert} 
\DeclarePairedDelimiter{\norm}{\lVert}{\rVert} 
\DeclarePairedDelimiter{\rbra}{(}{)} 
\DeclarePairedDelimiter{\cbra}{\{}{\}} 
\DeclarePairedDelimiter{\sbra}{[}{]} 


\newcommand{\bP}{\ensuremath{\mathbb{P}}}

\newcommand{\bR}{\ensuremath{\mathbb{R}}}

\newcommand{\cB}{\ensuremath{\mathcal{B}}}

\newcommand{\cD}{\ensuremath{\mathcal{D}}}

\newcommand{\cF}{\ensuremath{\mathcal{F}}}

\newcommand{\cH}{\ensuremath{\mathcal{H}}}

\theoremstyle{plain}
\newtheorem{Thm}{Theorem}[section]
\newtheorem{Lem}[Thm]{Lemma}
\newtheorem{Prop}[Thm]{Proposition}
\newtheorem{Cor}[Thm]{Corollary}

\theoremstyle{definition}

\theoremstyle{remark}

\theoremstyle{plain}
\newtheorem*{Thm*}{Theorem}
\newtheorem*{Lem*}{Lemma}
\newtheorem*{Prop*}{Proposition}
\newtheorem*{Cor*}{Corollary}
\newtheorem*{Conj*}{Conjecture}
\theoremstyle{definition}
\newtheorem*{Ass*}{Assumption}
\newtheorem*{Def*}{Definition}
\newtheorem*{Rem*}{Remark}
\theoremstyle{remark}
\newtheorem*{Eg*}{Example}

\setlength\topmargin{0mm}
\setlength\headheight{0mm}
\setlength\headsep{0mm}
\setlength\topskip{0mm}
\setlength\textheight{250mm}
\setlength\footskip{10mm}

\setlength\oddsidemargin{0mm}
\setlength\evensidemargin{0mm}
\setlength\textwidth{160mm}

\setlength\parindent{5mm}
\setlength\parskip{3mm}

\pagestyle{plain}


\makeatletter
\renewcommand\section{\@startsection {section}{1}{\z@}%
                                   {-3.5ex \@plus -1ex \@minus -.2ex}%
                                   {2.3ex \@plus.2ex}%
                                   {\normalfont\large\bf}}
\makeatother

\makeatletter
\renewcommand\subsection{\@startsection {subsection}{1}{\z@}%
                                   {-3.5ex \@plus -1ex \@minus -.2ex}%
                                   {2.3ex \@plus.2ex}%
                                   {\normalfont\normalsize\bf}}
\makeatother

\makeatletter
\renewcommand\subsubsection{\@startsection {subsubsection}{1}{\z@}%
                                   {-3.5ex \@plus -1ex \@minus -.2ex}%
                                   {2.3ex \@plus.2ex}%
                                   {\normalfont\normalsize\it}}
\makeatother

\usepackage[%
bookmarks=true,%
bookmarksdepth=3,%
bookmarksnumbered=true,%
setpagesize=false,%
pdftitle={Sample path behaviors of L\'{e}vy processes %
conditioned to avoid zero},%
pdfauthor={Shosei Takeda},%
pdfkeywords={one-dimensional L\'{e}vy process; %
conditioning; sample path properties; limit theorem}%
]{hyperref}

\title{\Large\textbf{Sample path behaviors of L\'{e}vy processes
conditioned to avoid zero}}
\author{Shosei Takeda\footnote{Rakunan High School,
Kyoto,
Japan. (email:
\texttt{takeda.shosei@gmail.com})}}
\date{}
\begin{document}
\maketitle
\begin{abstract}
  Takeda--Yano~\cite{me} determined
  the limit of L\'{e}vy processes conditioned to avoid zero via various random clocks
  in terms of Doob's \(h\)-transform, where
  the limit processes may differ according to the choice of random clocks.
  The purpose of this paper is to investigate sample path behaviors
  of the limit processes in long time and in short time.
\end{abstract}
{\small Keywords and phrases: one-dimensional L\'{e}vy process;
conditioning; sample path property; limit theorem \\
MSC 2020 subject classifications: 60G17 (60F05; 60G51; 60J25)}
\section{Introduction}
For a measure \(\mu\) and for a non-negative measurable
or an integrable function \(f\),
we write \(\mu\sbra{f}\) for the integral \(\int f\,\cd\mu\) for simplicity.

Let \(\cbra{\bP_x^{\mathrm{Bes}\, +}\colon x\ge 0}\)
(resp. \(\cbra{\bP_x^{\mathrm{Bes}\,-}\colon x\le 0}\))
denote the law of (resp.\ the negative of) the three-dimensional
Bessel process, starting from \(x\). For \(x\in\bR\),
let \(\bP_x^{\mathrm{sBes}}\) denote
the law of the symmetrized three-dimensional Bessel process
starting from \(x\), i.e., it holds that
\(\bP_x^{\mathrm{sBes}}=\bP_x^{\mathrm{Bes}\, +}\) for \(x>0\),
\(\bP_x^{\mathrm{sBes}}=\bP_x^{\mathrm{Bes}\, -}\) for \(x<0\)
and \(\bP_0^{\mathrm{sBes}}=\frac{1}{2}\bP_0^{\mathrm{Bes}\, +}
+\frac{1}{2}\bP_0^{\mathrm{Bes}\, -}\).
Let \(x\in \bR \) and
let \((X=(X_t,t\ge 0),\bP_x^B)\)
be the canonical representation of a standard Brownian motion starting from \(x\)
and \((\cF_t)\) denote the right-continuous filtration
generated by the natural filtration.
We denote by \(T_0=\inf\cbra{t>0\colon X_t=0}\) the first hitting time of the origin.
Then, in the case \(x\ne 0\),
we have the following conditioning limit theorem:
for any bounded \(\cF_t\)-measurable functional \(F_t\),
it holds that
\begin{align}\label{eq:intro}
  \lim_{s\to\infty}\bP_x^B\sbra{F_t| T_0>s} = \bP_x^{\mathrm{sBes}}\sbra{F_t}.
\end{align}
In the case \(x=0\), we have a similar conditioning limit theorem
if we replace \(\bP_0^B\) with the law of the Brownian meander.
This means that the Brownian motion conditioned to avoid zero
up to time \(t\) converges in law to the symmetrized three-dimensional Bessel process.
The left-hand side of~\eqref{eq:intro} can be regarded as the
\textit{Brownian motion conditioned to avoid zero}.
We remark that the three-dimensional Bessel process is transient
and never hits the origin.
For \(x\in\bR\setminus\cbra{0}\),
the process \(\bP_x^{\mathrm{sBes}}\) can be written
via Doob's \(h\)-transform with respect to
the non-negative harmonic function \(h(x)=\abs{x}\)
of the killed Brownian motion \(\cbra{\bP_x^{B,0}\colon x\in \bR\setminus\cbra{0}}\)
as follows:
\begin{align}
  \bP_x^{\mathrm{sBes}}|_{\cF_t}
  = \frac{\abs{X_t}}{\abs{x}}\bP_x^{B,0}|_{\cF_t},\quad t>0.
\end{align}
Let \(n^B\) stand for the Brownian excursion measure.
Then \(\bP_0^{\mathrm{sBes}}\) can also be written as
\begin{align}
  \bP_0^{\mathrm{sBes}}|_{\cF_t}
  = \frac{\abs{X_t}}{n^B\sbra{\abs{X_t}}}\cdot n^B|_{\cF_t},
  \quad t>0.
\end{align}

These results for Brownian motions were generalized to
one-dimensional L\'{e}vy processes.
Yano~\cite{MR2603019} constructed and investigated
one-dimensional L\'{e}vy processes conditioned to avoid zero
under the conditions that the process is symmetric and has no Gaussian part.
He also investigated path behaviors of the process.
Yano~\cite{MR3072331} extended their results
to asymmetric L\'{e}vy processes.
He also showed the existence of a non-negative harmonic function
for asymmetric killed L\'{e}vy processes
under some technical conditions.
Pant\'{\i}~\cite{MR3689384} and Tsukada~\cite{MR3838874}
showed the existence of the harmonic function
under more general conditions
and Pant\'{\i}~\cite{MR3689384} investigated asymmetric
L\'{e}vy processes conditioned to avoid zero using
\(h\)-transform with respect to its harmonic function.
Recently, Takeda--Yano~\cite{me} obtained
a family of harmonic functions \(h^{(\gamma)}\) parametrized by \(-1\le\gamma\le 1\)
for the killed L\'{e}vy process
under more general conditions.
They also constructed the L\'{e}vy process conditioned to avoid zero,
using the \(h^{(\gamma)}\)-transform.

In this paper, we investigate the path behaviors of
L\'{e}vy processes conditioned to avoid zero which is
constructed in Takeda--Yano~\cite{me}.

\subsection{L\'{e}vy processes conditioned to avoid zero}
We shall recall the construction of L\'{e}vy processes
conditioned to avoid zero in Takeda--Yano~\cite{me}.
For more details of the notation of this section,
see Section~\ref{Sec:preliminaries}.
Let \((X=(X_t,t\ge 0),(\bP_x)_{x\in\bR})\) denote the canonical representation
of a one-dimensional L\'{e}vy process
and we write \(\bP=\bP_0\).
Throughout this paper, we always assume
the following condition~\ref{item:assumption}:
\begin{enumerate}[label=\textbf{(\Alph*)}]
  \item The process \((X,\bP)\) is recurrent and, for each \(q > 0\), it holds that
        \begin{align}
          \int_0^\infty
          \abs*{\frac{1}{q+\varPsi(\lambda)}}
          \, \cd \lambda < \infty,
        \end{align}
        where \(\varPsi(\lambda)\) denotes the characteristic exponent
        given by \(\bP\sbra{\ce^{\ci \lambda X_t}}
        =\ce^{-t\varPsi(\lambda)}\).\label{item:assumption}
\end{enumerate}
Let \(T_A=\inf\cbra{t>0\colon X_t\in A}\) stand for the hitting time of
a set \(A\subset\bR\) and
we simply write \(T_a\coloneqq T_{\cbra{a}}\)
for the hitting time of a point \(a\in\bR\).
The condition~\ref{item:assumption} implies that
\(\bP(T_0=0)=1\) and \((X,\bP)\) is not
compound Poisson. In addition, there exists
the \(q\)-resolvent density \(r_q\) for \(q>0\).
For \(x\in\bR\), we define
\(h_q(x)=r_q(0)-r_q(-x)\ge 0\) and
\begin{align}
  h(x)= \lim_{q\to 0+}h_q(x), \quad x\in \bR,
\end{align}
which is called the \emph{renormalized zero resolvent};
see (\ref{Lem-item:h-exist}) of Lemma~\ref{Lem:h}.
The function \(h\) is subadditive;
see (\ref{Lem-item:h-subadditive}) of Lemma~\ref{Lem:h}.
We denote the second
moment of \(X_1\) by
\begin{align}
  m^2=\bP\sbra{X_1^2}\in (0,\infty].
\end{align}
For \(-1\le \gamma\le 1\), define
\begin{align}
  h^{(\gamma)}(x)=h(x)+\frac{\gamma}{m^2}x,\quad x\in\bR.
\end{align}
If \(m^2=\infty\), the functions \(h^{(\gamma)}\)
coincide with \(h\) for all \(-1\le\gamma\le 1\).
The function \(h^{(\gamma)}\) is non-negative
(see~\eqref{eq:h-g-plus}) and subadditive.
Let \(\bP_x^0\) denote the law under \(\bP_x\) of the killed process
\begin{align}
  X^0_t = \begin{cases}
    X_t    & \text{if \(t< T_0\)},   \\
    \Delta & \text{if \(t\ge T_0\)},
  \end{cases}
\end{align}
where \(\Delta\) stands for a cemetary point.
Let \(n\) denote It\^{o}'s excursion measure
normalized by the equation
\begin{align}\label{eq:n-normalize}
  n\sbra{1-\ce^{-qT_0}}=\frac{1}{r_q(0)},\quad q>0;
\end{align}
see Section~\ref{Sec:preliminaries}.
The next lemma says \(h^{(\gamma)}\) is harmonic for the killed process.
\begin{Lem}[\cite{me}]\label{Lem:harmonic}
  Assume the condition~\ref{item:assumption} is satisfied.
  For \(-1\le \gamma\le 1\) and \(x\in\bR\), it holds that
  \begin{align}\label{eq:harmonic}
    \bP^0_x\sbra{h^{(\gamma)}(X_t)} = h^{(\gamma)}(x)
    \quad\text{and}\quad
    n\sbra{h^{(\gamma)}(X_t)} = 1,\quad t>0.
  \end{align}
  In particular, the process \((h^{(\gamma)}(X_t),t> 0)\) is
  a non-negative \(\bP^0_x\)-martingale.
\end{Lem}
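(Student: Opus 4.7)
The plan is to decompose $h^{(\gamma)}(x)=h(x)+\frac{\gamma}{m^{2}}x$ and verify separately that each summand satisfies both identities in~\eqref{eq:harmonic}; the linear summand is vacuous when $m^{2}=\infty$. The martingale assertion then follows from the first identity via the Markov property of the killed process $(X^{0},\bP_{x}^{0})$ after extending $h^{(\gamma)}$ to the cemetery by $h^{(\gamma)}(\Delta)=0$.

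\emph{Linear part.} Assuming $m^{2}<\infty$, recurrence of $(X,\bP)$ together with the existence of the first moment forces $\bP[X_{1}]=0$ (otherwise the law of large numbers contradicts recurrence), so $t\mapsto X_{t}$ is a $\bP_{y}$-martingale for every $y\in\bR$. Decomposing at the stopping time $T_{0}$ via the strong Markov property,
\begin{align*}
x=\bP_{x}[X_{t}]=\bP_{x}^{0}[X_{t}]+\bP_{x}\sbra{\bP_{0}[X_{t-T_{0}}]\,\mathbf{1}_{T_{0}\le t}}=\bP_{x}^{0}[X_{t}],
\end{align*}
since $\bP_{0}[X_{s}]=0$. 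The companion claim $n[X_{t}]=0$ I would derive from the master formula relating $\bP$ to excursions away from $0$, which, after a Laplace transform in $t$, reduces to $\bP[X_{1}]=0$ together with the normalization~\eqref{eq:n-normalize}.

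\emph{Zero-resolvent part.} For $h$ itself I would first work with the approximants $h_{q}(x)=r_{q}(0)-r_{q}(-x)$ and establish the Laplace-transform identities
\begin{align*}
\int_{0}^{\infty}\ce^{-qt}\bP_{x}^{0}\sbra{h_{q}(X_{t})}\,\cd t=\frac{h_{q}(x)}{q},\qquad\int_{0}^{\infty}\ce^{-qt}n\sbra{h_{q}(X_{t})}\,\cd t=\frac{1}{q}.
\end{align*}
The first is obtained by writing $r_{q}(-y)=\int_{0}^{\infty}\ce^{-qs}p_{s}(-y)\,\cd s$, isolating $\bP_{x}^{0}[h_{q}(X_{t})]$ from $\bP_{x}[h_{q}(X_{t})]$ via strong Markov at $T_{0}$, and simplifying with the resolvent equation; the second is a direct consequence of the master formula and the normalization~\eqref{eq:n-normalize}. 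Passing $q\to 0+$ using subadditivity of $h$ from Lemma~\ref{Lem:h} to dominate, inverting the Laplace transform gives~\eqref{eq:harmonic} for a.e.\ $t>0$, and the semigroup property extends this to every $t>0$.

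The step I expect to be the main obstacle is the $q\to 0+$ passage under the infinite measure $n$: since $h$ is unbounded and $n\sbra{1}=\infty$, one cannot invoke dominated convergence directly. I would handle this by truncating the excursion space to $\cbra{T_{0}>\eps}$, on which $n$ is finite by~\eqref{eq:n-normalize}, controlling the truncation error with the explicit form of $q\mapsto n\sbra{1-\ce^{-qT_{0}}}$, and then sending $\eps\down 0$ after the Laplace inversion.
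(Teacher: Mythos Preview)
The paper does not give its own proof of Lemma~\ref{Lem:harmonic}; immediately after the statement it cites \cite[Theorem~8.1]{me} and \cite[(iii) of Theorem~2.2]{MR3689384}. So there is no in-paper argument to compare against, and your outline is effectively a sketch of what those references do. The decomposition \(h^{(\gamma)}=h+\gamma x/m^{2}\) and your treatment of the linear summand are correct and standard.

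For the \(h\)-part, however, both displayed Laplace identities are false as stated. Using~\eqref{eq:formula-duality} one computes
\[
\int_{0}^{\infty}\ce^{-qt}\,n\sbra{h_{q}(X_{t})}\,\cd t
=\int_{\bR}h_{q}(y)\,\frac{r_{q}(y)}{r_{q}(0)}\,\cd y
=\frac{1}{q}-\frac{1}{r_{q}(0)}\int_{\bR}r_{q}(-y)\,r_{q}(y)\,\cd y,
\]
and the last integral is strictly positive; a parallel computation with the killed resolvent \(r_{q}^{0}(x,y)=r_{q}(y-x)-r_{q}(-x)r_{q}(y)/r_{q}(0)\) shows the \(\bP_{x}^{0}\)-identity also carries a nonvanishing extra term. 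The point is that \(h_{q}\) is \emph{not} invariant for the killed semigroup---only the limit \(h\) is---so one cannot expect such clean formulas at positive \(q\). There is also a structural problem: you have tied the Laplace parameter to the index of \(h_{q}\), so after sending \(q\to 0+\) there is no Laplace transform left to invert, and the step ``pass \(q\to 0+\), then invert'' cannot produce a statement about individual \(t\). The fix is either to decouple the two roles of \(q\) (use a separate Laplace parameter \(p\), compute the integrals exactly with their correction terms, let \(q\to 0+\) at fixed \(p\), then invert in \(p\)), or---closer to what the cited references actually do---to show directly that \(\bP_{x}\sbra{h_{q}(X_{t})}-h_{q}(x)\) equals an explicit expression in \(\bP_{x}\sbra{L_{t}}\) and \(n\)-quantities, isolate the killed expectation from this, and pass \(q\to 0+\) without any Laplace inversion.
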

The proof of Lemma~\ref{Lem:harmonic} can be found in~\cite[Theorem 8.1]{me}
and~\cite[(iii) of Theorem 2.2]{MR3689384}.

For \(-1\le\gamma\le 1\),
define \(\cH^{(\gamma)} = \cbra{x\in\bR\colon h^{(\gamma)}(x)>0 }\) and
\(\cH^{(\gamma)}_0 = \cH^{(\gamma)} \cup \cbra{0}\).
If \(m^2=\infty\), we have \(\cH^{(\gamma)}=\cH^{(0)}\).
If \(m^2<\infty\), we have \(\cH^{(1)}_0\cap \cH^{(-1)}_0
\subset \cH^{(\gamma)}_0=\bR\) for \(-1<\gamma<1\) by~\eqref{eq:h-g-plus}.
Adopting Doob's \(h\)-transform approach,
we construct the \(h^{(\gamma)}\)-transform by
\begin{align}\label{eq:def-h-trans}
  \bP_x^{(\gamma)}|_{\cF_t}
  = \begin{dcases}
    \frac{h^{(\gamma)}(X_t)}{h^{(\gamma)}(x)}
    \cdot \bP^0_x|_{\cF_t}             & \text{if } x\in\cH^{(\gamma)}, \\
    h^{(\gamma)}(X_t) \cdot n|_{\cF_t} & \text{if } x = 0.
  \end{dcases}
\end{align}
Note that, if \(m^2=\infty\), we have \(\bP_x^{(\gamma)}=\bP_x^{(0)}\)
for all \(-1\le \gamma\le 1\).
By Lemma~\ref{Lem:harmonic}, we see that
\(\bP_x^{(\gamma)}|_{\cF_t}\) is consistent in \(t>0\)
and thus \(\bP_x^{(\gamma)}\) is well-defined
and is a probability measure on \(\cF_\infty\);
for more details, see Yano~\cite[Theorem 9.1]{yano2021universality}.
We can see \(\bP^{(\gamma)}_x(T_{\bR\setminus\cH^{(\gamma)}}>t)=1\) for all \(t>0\)
and consequently it holds that
\(\bP^{(\gamma)}_x(T_{\bR\setminus\cH^{(\gamma)}}=\infty)=1\).
Hence the process \((X, \bP^{(\gamma)}_x)\) never hits zero.
We remark that, for \(x\in\cH^{(\gamma)}\), the measure
\(\bP^{(\gamma)}_x\) is absolutely continuous on \(\cF_t\)
with respect to \(\bP_x\), but is singular on \(\cF_\infty\) to \(\bP_x\)
since
\(\bP_x^{(\gamma)}(T_0=\infty) = \bP_x(T_0<\infty)=1\).

The next theorem shows that for \(x\in \cH^{(\gamma)}\),
the measure \(\bP_x^{(\gamma)}\) can be obtained
as the limit measure of the L\'{e}vy process conditioned to avoid zero
via a random clock, i.e., a certain parametrized family of random times, going
to infinity.
Let \(\bm{e}\) be an independent exponential time with mean
\(1\) and we write \(\bm{e}_q=\bm{e}/q\).

\begin{Thm}[\cite{me}]\label{Thm:limit-meas}
  Assume the condition~\ref{item:assumption} is satisfied.
  Let \(t>0\) and \(F_t\) be a bounded \(\cF_t\)-measurable functional.
  Then the following assertions hold:
  \begin{enumerate}
    \item
          \(\displaystyle
          \lim_{q\to 0+}
          \bP_x\sbra{F_t|T_0>\bm{e}_q}=\bP_x^{(0)}\sbra{F_t}
          \), for \(x\in \cH^{(0)}\);\label{Thm-item:limit-meas-exp}
    \item
          \(\displaystyle
          \lim_{a\to\pm\infty}
          \bP_x\sbra{F_t|T_0>T_a}=\bP_x^{(\pm 1)}\sbra{F_t}
          \), for \(x\in \cH^{(\pm 1)}\);
    \item
          \(\displaystyle
          \lim_{\substack{a\to\infty,\,b\to\infty,\\ \frac{b-a}{a+b}\to \gamma}}
          \bP_x\sbra{F_t|T_0>T_{\cbra{a,-b}}}=\bP_x^{(\gamma)}\sbra{F_t}
          \), for \(-1\le \gamma\le 1\)
          and \(x\in \cH^{(\gamma)}\).\label{Thm-item:limit-meas-twohitting}
  \end{enumerate}
\end{Thm}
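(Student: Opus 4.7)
The common strategy is to express, for each random clock $\tau$,
\[
\bP_x\sbra{F_t \mid T_0 > \tau} = \frac{\bP_x\sbra{F_t;\, T_0 > \tau}}{\bP_x(T_0 > \tau)},
\]
split the numerator over $\cbra{T_0 > t}$ versus $\cbra{T_0 \le t}$, and apply the simple Markov property at time $t$ on the main piece. On $\cbra{T_0 > t}$ the conditional probability $\bP(\tau < T_0 \mid \cF_t)$ factors through $X_t$ modulo an explicit boundary term, so after dividing by $\bP_x(T_0 > \tau)$ the integrand should converge to $h^{(\gamma)}(X_t)/h^{(\gamma)}(x)$ in the stated limit, yielding $\bP_x^{(\gamma)}\sbra{F_t}$ by the definition~\eqref{eq:def-h-trans} and the harmonic property of Lemma~\ref{Lem:harmonic}.

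For~(\ref{Thm-item:limit-meas-exp}), independence and memorylessness of $\bm{e}_q$, together with the identity $\bP_y(\bm{e}_q < T_0) = 1 - \bP_y\sbra{\ce^{-qT_0}} = h_q(y)/r_q(0)$, give on $\cbra{T_0 > t}$
\[
\bP(\bm{e}_q < T_0 \mid \cF_t) = (1 - \ce^{-qt}) + \ce^{-qt}\,\frac{h_q(X_t)}{r_q(0)}.
\]
Dividing by $\bP_x(\bm{e}_q < T_0) = h_q(x)/r_q(0)$ and sending $q \to 0+$, the first summand is $O(q\, r_q(0))$ and vanishes because recurrence under~\ref{item:assumption} forces $q r_q(0) \to 0$; the second converges to $h(X_t)/h(x) = h^{(0)}(X_t)/h^{(0)}(x)$ via $h_q \up h$, with dominated convergence secured by the integrable majorant $h(X_t)$ under $\bP_x\sbra{\cdot\,;\, T_0 > t}$ (Lemma~\ref{Lem:harmonic}). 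The $\cbra{T_0 \le t}$-residual is bounded by $\norm{F_t}_\infty (1 - \ce^{-qt})$ and is likewise $O(q r_q(0)) \to 0$ after normalization.

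For~(ii) and~(\ref{Thm-item:limit-meas-twohitting}), let $\tau \in \cbra{T_a, T_{\cbra{a,-b}}}$. On $\cbra{T_0 \wedge \tau > t}$ the Markov property gives $\bP(\tau < T_0 \mid \cF_t) = \bP_{X_t}(\tau < T_0)$, while on $\cbra{\tau \le t < T_0}$ the indicator $\mathbf{1}_{\tau < T_0}$ equals $1$. After dividing by $\bP_x(\tau < T_0)$, the argument reduces to two ingredients: the pointwise hitting-probability asymptotic
\[
\frac{\bP_y(\tau < T_0)}{\bP_x(\tau < T_0)} \longrightarrow \frac{h^{(\gamma)}(y)}{h^{(\gamma)}(x)}, \quad y \in \bR,
\]
in the appropriate limit, supplemented by a dominator bound of the form $\bP_y(\tau < T_0)/\bP_x(\tau < T_0) \le C\, h^{(\gamma)}(y)$ (integrable by Lemma~\ref{Lem:harmonic}); and the negligibility $\bP_x(\tau \le t)/\bP_x(\tau < T_0) \to 0$ in the same limit, reflecting that the process cannot reach faraway points in bounded time at the same rate at which it reaches them at all before returning to $0$.

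The principal obstacle is this hitting-probability asymptotic, especially the two-point version in~(\ref{Thm-item:limit-meas-twohitting}). One must establish an expansion of $\bP_y(T_{\cbra{a,-b}} < T_0)$ which, after normalization, simultaneously isolates the subadditive renormalized resolvent $h(y)$ and the linear correction $\gamma y/m^2$, with the precise balance dictated by $(a - b)/(a + b) \to \gamma$. I expect this to follow from a fluctuation-theoretic decomposition of $\bP_y(T_{\cbra{a,-b}} < T_0)$ through the joint overshoot/undershoot distributions at $a$ and $-b$ together with the $q$-resolvent $r_q$ of the killed process, in the manner developed in Takeda--Yano~\cite{me} when constructing the family $h^{(\gamma)}$.
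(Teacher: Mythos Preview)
The paper does not prove this theorem itself; it is quoted from \cite{me} with the proof deferred to Corollary~8.2 there. Your overall strategy---apply the Markov property at time \(t\), rewrite the conditional probability through the ratio \(\bP_{X_t}(\tau<T_0)/\bP_x(\tau<T_0)\), and pass to the limit by dominated convergence---is the right one, and it parallels how the present paper proves the companion result Theorem~\ref{Thm:limit-P0} (for the starting point \(x=0\)) in Section~\ref{Sec:pf-meander}. There is, however, a concrete misidentification of the key technical input.

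For~(ii) and~(\ref{Thm-item:limit-meas-twohitting}) you expect the hitting-probability asymptotic to come from ``a fluctuation-theoretic decomposition through the joint overshoot/undershoot distributions.'' That is not how the argument goes. The actual tool is purely potential-theoretic: Lemma~\ref{Lem:hitting-h-repre} (taken from~\cite{me}) gives \emph{exact} closed-form expressions
\[
h^B(a)\,\bP_y(T_a<T_0)=h(y)+h(-a)-h(y-a),
\]
together with an analogous formula for \(\bP_y(T_{\{a,-b\}}<T_0)\), written entirely in terms of the renormalized zero resolvent \(h\). After dividing by the same expression at \(y=x\), the limit follows directly from the asymptotic \(h(z+y)-h(y)\to \pm z/m^2\) of Lemma~\ref{Lem:h}(\ref{Lem-item:h-diff-infty}); this is precisely what manufactures the linear correction \(\gamma y/m^2\) and yields \(h^{(\gamma)}(y)/h^{(\gamma)}(x)\). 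No Wiener--Hopf factorisation or overshoot analysis enters. The required domination \(h(y)+h(-a)-h(y-a)\le h(y)+h(-y)\) is immediate from subadditivity of \(h\), and the integrability of this majorant under \(\bP_x^0\) is what licenses dominated convergence (compare the role of Lemma~\ref{Lem:integ-h-} in the proof of Theorem~\ref{Thm:limit-P0}). Once the main term is shown to converge to \(\bP_x^{(\gamma)}[F_t]\) for every bounded \(F_t\), the residual on \(\{\tau\le t\}\) is forced to vanish simply by taking \(F_t\equiv 1\), so no separate rate estimate for \(\bP_x(\tau\le t)\) is needed.

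Two smaller points on~(\ref{Thm-item:limit-meas-exp}): the monotone convergence \(h_q\uparrow h\) you invoke is not established here and is not obviously true in general; the paper's proof of Theorem~\ref{Thm:limit-P0}(\ref{Thm-item:limit-P0-exp}) instead uses the dominator \(h(X_t)+h(-X_t)\). Your claim \(q\,r_q(0)\to 0\) is correct but merits a word of justification: via \(1/r_q(0)=n[1-\ce^{-qT_0}]\) it is equivalent to \(n[T_0]=\infty\), which holds because a recurrent L\'evy process on \(\bR\) is null-recurrent.
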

The proof of Theorem~\ref{Thm:limit-meas} can be found in Corollary 8.2 of~\cite{me}.
We remark, however, that
there are computational errors in the claim (iii) of Corollary 8.2 of~\cite{me};
see~\cite[Section 6]{iba2024}.
Claim (\ref{Thm-item:limit-meas-exp}) of Theorem~\ref{Thm:limit-meas}
is also proved in Pant\'{\i}~\cite[Theorem 2.7]{MR3689384}.
If \(m^2<\infty\), then the limit measure differs
according to the random clock.

We remark that the limit
\( \lim_{s\to\infty}
\bP_x\sbra{F_t|T_0>s}\) via constant clock is determined in the symmetric stable case
(see Yano--Yano--Yor~\cite{MR2552915})
but the limit is an open problem in the general L\'{e}vy case.

The left-hand side
of each of~(\ref{Thm-item:limit-meas-exp})--(\ref{Thm-item:limit-meas-twohitting})
of Theorem~\ref{Thm:limit-meas} can be regarded as
\emph{L\'{e}vy processes conditioned to avoid zero}
although the resulting processes may differ according to the choice of the clocks.
We remark that the resulting processes are characterized via Doob's \(h\)-transform.
For related studies, see
Chaumont~\cite{MR1419491} and
Chaumont--Doney~\cite{MR2164035} for
L\'{e}vy processes conditioned to stay positive.
Yano--Yano~\cite{MR3444297} for diffusions conditioned to avoid zero.

Let \(\cD\) denote the space of c\`{a}dl\`{a}g paths
\(\omega\colon [0,\infty)\to \bR\cup \cbra{\Delta}\).
We denote by \(\theta\) the shift operator and by \(k\) the killing operator, i.e.,
we define, for \(\omega\in \cD\) and \(t\ge 0\), \(\theta_t\omega(s)=\omega(s+t),\;
s\ge 0\), and define
\begin{align}
  k_t \omega(s)=\begin{cases}
    \omega(s) & \text{if \(s<t\),}    \\
    \Delta    & \text{if \(s\ge t\).}
  \end{cases}
\end{align}
For \(s>0\), we denote by \(g_s=\sup\cbra{u\le s\colon X_u=0}\)
the last hitting time of the origin up to time \(s\).
Then we have, for \(\tau>0\),
\begin{align}
  k_{\tau-g_\tau}\circ \theta_{g_\tau} \omega(s)=
  \begin{cases}
    \omega(g_\tau +s) & \text{if \(0\le s<\tau-g_\tau\)}, \\
    \Delta            & \text{if \(s\ge \tau-g_{\tau}\)}.
  \end{cases}\end{align}
The next theorem shows that for \(x=0\),
the measure \(\bP_x^{(\gamma)}=\bP_0^{(\gamma)}\) can be obtained
as the limit, via a random clock, of a measure similar to
the L\'{e}vy meander.

\begin{Thm}\label{Thm:limit-P0}
  Assume the condition~\ref{item:assumption} is satisfied.
  Let \(t>0\) and \(F_t\) be a bounded \(\cF_t\)-measurable functional.
  Then the following assertions hold:
  \begin{enumerate}
    \item
          \(\displaystyle
          \lim_{q\to 0+}
          \bP_0\sbra{F_t\circ k_{\bm{e}_q-g_{\bm{e}_q}}\circ \theta_{g_{\bm{e}_q}}}
          =\bP_0^{(0)}\sbra{F_t}
          \);\label{Thm-item:limit-P0-exp}
    \item
          \(\displaystyle
          \lim_{a\to\pm\infty}
          \bP_0\sbra{F_t\circ k_{{T_a}-g_{{T_a}}}\circ \theta_{g_{{T_a}}}}
          =\bP_0^{(\pm 1)}\sbra{F_t}
          \);\label{Thm-item:limit-P0-hitting}
    \item
          \(\displaystyle
          \lim_{\substack{a\to\infty,\,b\to\infty,\\ \frac{b-a}{a+b}\to \gamma}}
          \bP_0\sbra{F_t\circ k_{{T_{\cbra{a,-b}}}-g_{T_{\cbra{a,-b}}}}
          \circ \theta_{g_{T_{\cbra{a,-b}}}}}
          =\bP_0^{(\gamma)}\sbra{F_t}
          \), for \(-1\le \gamma\le 1\).\label{Thm-item:limit-P0-twohitting}
  \end{enumerate}
\end{Thm}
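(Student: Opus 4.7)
My plan is to use Itô's excursion theory to convert each left-hand side into an integral against the excursion measure $n$, and then invoke the same convergence mechanisms that underlie Theorem~\ref{Thm:limit-meas} to identify the limit with $\bP_0^{(\gamma)}\sbra{F_t}=n\sbra{F_t\,h^{(\gamma)}(X_t)}$. Under $\bP_0$, the excursions form a Poisson point process with intensity $n$, and the straddling excursion of each of our three clocks admits an explicit description in terms of $n$.

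For the hitting-time clocks in~(\ref{Thm-item:limit-P0-hitting}) and~(\ref{Thm-item:limit-P0-twohitting}), write $A$ for the target set ($\cbra{a}$ or $\cbra{a,-b}$). The first excursion entering $A$ is distributed as $n(\,\cdot\mid T_A<T_0)$, so the marking property of Poisson point processes yields
\[
\bP_0\sbra{F_t\circ k_{T_A-g_{T_A}}\circ\theta_{g_{T_A}}}
=\frac{n\sbra{F_t(k_{T_A}X);\,T_A<T_0}}{n\sbra{T_A<T_0}}.
\]
I would split the numerator by $\cbra{T_A<t}$ versus $\cbra{T_A\ge t}$. The first piece is bounded by $\norm{F_t}_\infty\cdot n\sbra{T_A<t}$, which vanishes as $A$ escapes to infinity since $n$-a.e.\ excursion is locally bounded. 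On $\cbra{T_A\ge t}$ one has $F_t(k_{T_A}X)=F_t$, and the Markov property of $n$ at time $t$ rewrites the piece as $n\sbra{F_t\,\bP_{X_t}\sbra{T_A<T_0};\,T_0>t,\,T_A>t}$. Dividing by $n\sbra{T_A<T_0}$, the integrand converges pointwise to $F_t\,h^{(\pm1)}(X_t)\mathbf{1}_{T_0>t}$ in case~(\ref{Thm-item:limit-P0-hitting}) and to $F_t\,h^{(\gamma)}(X_t)\mathbf{1}_{T_0>t}$ in case~(\ref{Thm-item:limit-P0-twohitting}), by the very limit theorems that~\cite{me} uses to construct the harmonic functions $h^{(\gamma)}$.

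For the exponential clock in~(\ref{Thm-item:limit-P0-exp}), the compensation formula for the excursion point process gives
\[
\bP_0\sbra{F_t\circ k_{\bm{e}_q-g_{\bm{e}_q}}\circ\theta_{g_{\bm{e}_q}}}
=r_q(0)\cdot n\!\left[\int_0^{T_0}qe^{-qu}F_t(k_uX)\,du\right].
\]
The $u<t$ portion of the inner integral is a routine error term to be controlled by $q\,r_q(0)\,\norm{F_t}_\infty\,n\sbra{T_0\wedge t}$ and the behavior of $q\,r_q(0)$. On $u\ge t$ we have $F_t(k_uX)=F_t$ and the $u$-integral equals $e^{-qt}-e^{-qT_0}$; applying the Markov property of $n$ at $t$ and the identity $\bP_x\sbra{e^{-qT_0}}=r_q(-x)/r_q(0)$ converts the weight $r_q(0)(e^{-qt}-e^{-qT_0})$ into $h_q(X_t)+r_q(0)(e^{-qt}-1)\mathbf{1}_{T_0>t}$. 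Since $h_q\to h=h^{(0)}$ by the defining property of the renormalized zero resolvent, and since the spurious linearized term vanishes in the limit (after absorbing it against the $u<t$ remainder), dominated convergence yields $n\sbra{F_t\,h^{(0)}(X_t)}=\bP_0^{(0)}\sbra{F_t}$.

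The main obstacle will be justifying dominated convergence uniformly in the limiting parameter. I will need an $n$-integrable majorant for the random weights $\bP_{X_t}\sbra{T_A<T_0}/n\sbra{T_A<T_0}$ (respectively for $h_q(X_t)$), and this is where the delicate input from~\cite{me} enters: the uniform bound $\bP_x\sbra{T_A<T_0}\le C\,n\sbra{T_A<T_0}\,h^{(\gamma)}(x)$ together with the normalization $n\sbra{h^{(\gamma)}(X_t)}=1$ from Lemma~\ref{Lem:harmonic} provides the required majorant. The residual contributions from excursions that hit $A$ before time $t$ in~(\ref{Thm-item:limit-P0-hitting}) and~(\ref{Thm-item:limit-P0-twohitting}), and from the $u<t$ portion of the $q$-integral in~(\ref{Thm-item:limit-P0-exp}), are then controlled by the $\sigma$-finiteness of $n$ on $\cbra{T_0>t}$ guaranteed by~\ref{item:assumption}.
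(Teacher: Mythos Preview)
Your overall architecture---compensation formula to rewrite the left-hand side as an $n$-integral, Markov property at time $t$, then dominated convergence---is exactly the paper's route, and your identification of the main term is correct. In case~(\ref{Thm-item:limit-P0-hitting}), for instance, the paper obtains (using $\bP_0\sbra{L_{T_a}}=1/n(T_a<T_0)=h(a)+h(-a)$ and Lemma~\ref{Lem:hitting-h-repre})
\[
\bP_0\sbra{L_{T_a}}\,n\sbra{F_t\,\bP_{X_t}(T_a<T_0);\,t<T_a}
= n\sbra{F_t\,\bigl(h(X_t)+h(-a)-h(X_t-a)\bigr);\,t<T_a},
\]
which is your main term in equivalent form.

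The gap is in the dominating function. The bound $\bP_x\sbra{T_A<T_0}\le C\,n\sbra{T_A<T_0}\,h^{(\gamma)}(x)$ that you invoke is not proved in~\cite{me} and is false in general. For a spectrally positive strictly stable process of index $\alpha\in(1,2)$ one has $m^2=\infty$, so $h^{(\gamma)}=h$ for every $\gamma$, and $h(x)=0$ for all $x>0$; yet for $0<x<a$ the quantity $\bP_x(T_a<T_0)/n(T_a<T_0)=h(x)+h(-a)-h(x-a)=c\bigl(a^{\alpha-1}-(a-x)^{\alpha-1}\bigr)>0$. The same example defeats the analogous bound $h_q\le C\,h$ you would need in~(\ref{Thm-item:limit-P0-exp}). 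The paper's majorant is instead $h(X_t)+h(-X_t)$: subadditivity of $h$ gives $h(-a)-h(x-a)\le h(-x)$, and for~(\ref{Thm-item:limit-P0-exp}) one has $h_q(x)\le h_q(x)+h_q(-x)\le h(x)+h(-x)$ since the symmetrized resolvent has a nonnegative integrand decreasing in $q$. The integrability $n\sbra{h(X_t)}=1$ is Lemma~\ref{Lem:harmonic}, but $n\sbra{h(-X_t)}<\infty$ is a separate lemma (Lemma~\ref{Lem:integ-h-}) that the paper proves via the duality formula~\eqref{eq:formula-duality} and an estimate from~\cite{MR3689384}; this is the missing ingredient in your plan.

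A smaller point: your control of the $\cbra{T_A<t}$ piece by $\norm{F_t}_\infty\,n\sbra{T_A<t}$ forgets the division by $n\sbra{T_A<T_0}$, which also tends to zero. The paper sidesteps this by (implicitly) taking $F_t$ to vanish on paths already at $\Delta$ by time $t$; alternatively, once the main term is shown to converge with $F_t\equiv 1$, the identity $\bP_0\sbra{L_{T_A}}\,n\sbra{T_A<T_0}=1$ forces the error to vanish.
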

The proof of Theorem~\ref{Thm:limit-P0} will be given in Section~\ref{Sec:pf-meander}.
Claim (\ref{Thm-item:limit-P0-exp}) of Theorem~\ref{Thm:limit-P0} is also proved
in Pant\'{\i}~\cite[Theorem 2.8]{MR3689384}.

\subsection{Main results}\label{Subsec:result}
Recall that we always assume the condition~\ref{item:assumption}.

\subsubsection{Long-time behaviors of the process \((X,\bP_x^{(\gamma)})\)}
The proofs of the following Theorems~\ref{Thm:p-gamma-abs-infty},~\ref{Thm:P-g-equi}
and~\ref{Thm:h-gamma-process-infty} will be given in Section~\ref{Sec:pf}.
\begin{Thm}\label{Thm:p-gamma-abs-infty}
  Let \(-1\le \gamma\le 1\) and \(x\in \cH_0^{(\gamma)}\). Then it holds that
  \begin{align}
    \bP_x^{(\gamma)}\rbra*{\lim_{t\to\infty} \abs{X_t}=\infty} =1.
  \end{align}
  Consequently, the process \((X, \bP_x^{(\gamma)})\) is transient.
\end{Thm}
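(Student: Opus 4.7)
The plan is to reduce the pathwise conclusion $|X_t|\to\infty$ to the seemingly stronger statement $h^{(\gamma)}(X_t)\to\infty$ $\bP_x^{(\gamma)}$-almost surely, and then convert back via local boundedness of $h^{(\gamma)}$. Suppose first that $x\in\cH^{(\gamma)}$; because $\bP_x^{(\gamma)}$-a.s.\ the process stays in $\cH^{(\gamma)}$, the quantity $M_t\coloneqq 1/h^{(\gamma)}(X_t)$ is well-defined and finite for every $t>0$, and I would take it as the central object.

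The key calculation is that $(M_t)_{t>0}$ is a non-negative $\bP_x^{(\gamma)}$-supermartingale. Combining the Markov property with the definition~\eqref{eq:def-h-trans} of the $h^{(\gamma)}$-transform, one computes
\begin{align}
  \bP_x^{(\gamma)}\sbra*{M_{t+s}\mid\cF_t}
  =\frac{\bP_{X_t}\rbra{T_0>s,\,X_s\in\cH^{(\gamma)}}}{h^{(\gamma)}(X_t)}
  \le M_t,
\end{align}
so $M_t$ converges $\bP_x^{(\gamma)}$-a.s.\ to some limit $M_\infty\ge 0$. Taking expectations starting from $x$ gives $\bP_x^{(\gamma)}\sbra{M_t}=\bP_x\rbra{T_0>t,\,X_t\in\cH^{(\gamma)}}/h^{(\gamma)}(x)\to 0$ as $t\to\infty$, since recurrence under condition~\ref{item:assumption} forces $\bP_x\rbra{T_0<\infty}=1$. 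Fatou's lemma then yields $\bP_x^{(\gamma)}\sbra{M_\infty}=0$, hence $M_\infty=0$ a.s., i.e.\ $h^{(\gamma)}(X_t)\to\infty$ a.s.

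To pass from $h^{(\gamma)}(X_t)\to\infty$ to $|X_t|\to\infty$, I would invoke the local boundedness of $h$, a standard consequence of the continuous resolvent density afforded by condition~\ref{item:assumption}, which should be recorded in Lemma~\ref{Lem:h}. Together with the linearity of the extra term $(\gamma/m^2)x$, this yields $\sup_{|y|\le M}h^{(\gamma)}(y)<\infty$ for every $M>0$, so that $h^{(\gamma)}(X_t)\to\infty$ forces $|X_t|>M$ eventually, whence $\liminf_{t\to\infty}|X_t|=\infty$. The remaining case $x=0$ follows by applying the Markov property at an arbitrary $\delta>0$: under $\bP_0^{(\gamma)}$ one has $X_\delta\in\cH^{(\gamma)}$ a.s.\ by the remarks preceding the theorem, reducing the problem to the case already settled.

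The step requiring the most care is the first equality in the supermartingale identity, where one must account for the fact that $h^{(\gamma)}$ vanishes outside $\cH_0^{(\gamma)}$, so that the indicator $\{X_s\in\cH^{(\gamma)}\}$ appears naturally when the $h^{(\gamma)}$-weighting cancels $1/h^{(\gamma)}(X_s)$. Beyond this bookkeeping, the argument is essentially automatic given Lemma~\ref{Lem:harmonic} and the local boundedness of $h$.
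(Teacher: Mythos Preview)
Your proposal is correct and follows essentially the same route as the paper: the supermartingale $1/h^{(\gamma)}(X_t)$, Fatou's lemma, and recurrence of $(X,\bP_x)$ to force the limit to vanish. The only cosmetic differences are that the paper packages the supermartingale property into a separate lemma (Lemma~\ref{Lem:P-g-mart}) and, for $x=0$, repeats the supermartingale argument directly with $n$ in place of $\bP_x^0$ rather than reducing to $x\in\cH^{(\gamma)}$ via the Markov property at a positive time; both variants are equally short.
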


We discuss the result
when \(m^2<\infty\).
In this case, recall that, by~\eqref{eq:h-g-plus}, we have \(\cH^{(\gamma)}_0=\bR\)
for \(-1<\gamma<1\).
\begin{Thm}\label{Thm:P-g-equi}
  Assume \(m^2<\infty\).
  Then, for \(x\in \bR\) and \(-1<\gamma<1\),
  the measure \(\bP_x^{(\gamma)}\) is
  equivalent to \(\bP_x^{(0)}\).
  Moreover, for \(x\in \cH_0^{(\pm 1)}\), the measure
  \(\bP_x^{(\pm 1)}\)
  is absolutely continuous
  with respect to \(\bP_x^{(0)}\).
\end{Thm}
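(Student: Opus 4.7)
The plan is to exhibit the explicit Radon--Nikodym density of $\bP_x^{(\gamma)}$ with respect to $\bP_x^{(0)}$ on each $\cF_t$ by directly comparing the two specifications in the $h$-transform formula~\eqref{eq:def-h-trans}, and then to check its positivity. I would first record that, by the remark preceding~\eqref{eq:def-h-trans}, since $m^2 < \infty$ and $-1 < \gamma < 1$ we have $\cH_0^{(\gamma)} = \bR$, so that $h(y) > 0$ and $h^{(\gamma)}(y) > 0$ for every $y \neq 0$.

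For $x \in \bR \setminus \{0\}$, dividing~\eqref{eq:def-h-trans} with parameter $\gamma$ by the same formula with parameter $0$ would yield
\[
\frac{d\bP_x^{(\gamma)}}{d\bP_x^{(0)}}\bigg|_{\cF_t}
= \frac{h(x)}{h^{(\gamma)}(x)}\cdot\frac{h^{(\gamma)}(X_t)}{h(X_t)},
\]
while for $x = 0$ the excursion-measure branch of~\eqref{eq:def-h-trans} would give
\[
\frac{d\bP_0^{(\gamma)}}{d\bP_0^{(0)}}\bigg|_{\cF_t}
= \frac{h^{(\gamma)}(X_t)}{h(X_t)}.
\]
Because the process $(X, \bP_x^{(0)})$ never visits $0$ at any positive time, one has $X_t \neq 0$ $\bP_x^{(0)}$-a.s., so for $-1 < \gamma < 1$ both numerator and denominator are $\bP_x^{(0)}$-a.s. strictly positive. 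This makes the density a.s. finite and a.s. strictly positive and thus yields equivalence on each $\cF_t$. For $\gamma = \pm 1$ and $x \in \cH_0^{(\pm 1)}$, the same calculation still produces a $\bP_x^{(0)}$-a.s. finite density, which gives absolute continuity; however the numerator $h^{(\pm 1)}(X_t)$ vanishes on $\{X_t \notin \cH^{(\pm 1)}_0\}$, a set that in general carries positive $\bP_x^{(0)}$-mass, so the reverse absolute continuity degenerates and only one-sided absolute continuity survives.

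The hard part will essentially be cosmetic: Lemma~\ref{Lem:harmonic} already ensures that each of the expressions above has $\bP_x^{(0)}$-expectation $1$, so they automatically define probability measures and no further estimates are needed. The only detail that should be spelled out is the $x = 0$ case, where one must switch from $\bP_x^0$ to the excursion measure $n$ in the definition of $\bP_0^{(\gamma)}$ before dividing; once that substitution is made, the cancellation of the common factor $n|_{\cF_t}$ is identical to the $x \neq 0$ case and the argument closes uniformly in $x$.
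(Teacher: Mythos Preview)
Your computation of the Radon--Nikodym density on each $\cF_t$ is correct and matches the paper's starting point, but there is a genuine gap: you only establish equivalence (respectively, absolute continuity) on each $\cF_t$, whereas the theorem concerns the measures on $\cF_\infty$. Equivalence on every $\cF_t$ does \emph{not} imply equivalence on $\cF_\infty$; the paper itself remarks just after~\eqref{eq:def-h-trans} that $\bP_x^{(\gamma)}$ is absolutely continuous with respect to $\bP_x$ on each $\cF_t$ yet singular on $\cF_\infty$. So ``density strictly positive on each $\cF_t$'' is not enough, and your sentence ``no further estimates are needed'' is exactly where the argument breaks.

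The missing ingredient is a uniform (in $t$) two-sided bound on the density. From~\eqref{eq:h-g-plus} with $\gamma=0$ one has $h(y)\ge |y|/m^2$ for all $y$, and since $h^{(\gamma)}(y)=h(y)+\gamma y/m^2$ this gives
\[
1-|\gamma|\;\le\;\frac{h^{(\gamma)}(y)}{h(y)}\;\le\;1+|\gamma|,\qquad y\neq 0.
\]
Hence $(1-|\gamma|)\,c_x\,\bP_x^{(0)}\le \bP_x^{(\gamma)}\le (1+|\gamma|)\,c_x\,\bP_x^{(0)}$ on every $\cF_t$ with $c_x=h(x)/h^{(\gamma)}(x)$ (and $c_0=1$), and these inequalities extend to $\cF_\infty$ by a monotone-class / extension argument; equivalently, the bounded $\bP_x^{(0)}$-martingale $h^{(\gamma)}(X_t)/h(X_t)$ is uniformly integrable, so it converges in $L^1$ to an $\cF_\infty$-measurable limit lying in $[1-|\gamma|,1+|\gamma|]$, which serves as the Radon--Nikodym derivative on $\cF_\infty$. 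This is precisely what the paper does. For $\gamma=\pm 1$ only the upper bound survives, yielding absolute continuity but not equivalence on $\cF_\infty$, which is the correct conclusion.
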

We discuss long-time behaviors of the process \((X,\bP^{(\gamma)}_x)\)
in the case \(m^2<\infty\).
Define
\begin{align}
  \Omega^{+}_\infty   & = \cbra*{\lim_{t\to\infty}X_t=\infty},                             \\
  \Omega^{-}_\infty   & = \cbra*{\lim_{t\to\infty}X_t=-\infty},                            \\
  \Omega^{+,-}_\infty & = \cbra*{\limsup_{t\to\infty}X_t=-\liminf_{t\to\infty}X_t=\infty}.
\end{align}
Then the sets \( \Omega^{+}_\infty\), \(\Omega^{-}_\infty \)
and \(\Omega^{+,-}_\infty\) are mutually disjoint and
\(\cbra{\lim_{t\to\infty}\abs{X_t}=\infty}
\subset \Omega^{+}_\infty\cup\Omega^{-}_\infty \cup \Omega^{+,-}_\infty\).
Hence by Theorem~\ref{Thm:p-gamma-abs-infty}, it holds that
\begin{align}\label{eq:Omega-union}
  \bP^{(\gamma)}_x(\Omega^{+}_\infty\cup \Omega^{-}_\infty\cup \Omega^{+,-}_\infty)=1.
\end{align}
If \(m^2<\infty\), the process \((X,\bP_x^{(\gamma)})\) drifts to
either \(+\infty\) or \(-\infty\)
with a certain probability.
\begin{Thm}\label{Thm:h-gamma-process-infty}
  Assume \(m^2<\infty\).
  Then, for \(-1\le \gamma\le 1\), it holds that
  \begin{align}
    \bP_x^{(\gamma)}\rbra{\Omega^{\pm}_\infty}
    = \begin{dcases}
      \frac{(1\pm\gamma)}{2}\frac{h^{(\pm 1)}(x)}{h^{(\gamma)}(x)}
       & \text{if \(x\in \cH^{(\gamma)}\),} \\
      \frac{1\pm\gamma}{2}
       & \text{if \(x=0\).}
    \end{dcases}
  \end{align}
  Consequently, for \(x\in\cH^{(1)}_0\cap\cH^{(-1)}_0 \),
  it holds that
  \begin{gather}
    \bP_x^{(\gamma)}
    \rbra{\Omega^{+}_\infty \cup \Omega^{-}_\infty}
    =
    \bP_x^{(1)}\rbra{\Omega^{+}_\infty}
    = \bP_x^{(-1)}\rbra{\Omega^{-}_\infty}
    =1,
  \end{gather}
  which implies that \(\bP_x^{(1)}\) and \(\bP_x^{(-1)}\)
  are mutually singular on \(\cF_{\infty}\).
\end{Thm}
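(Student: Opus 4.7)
The plan is to write $\bP_x^{(\gamma)}$ as a convex combination of the extremal measures $\bP_x^{(1)}$ and $\bP_x^{(-1)}$, and then to show that each of $\bP_x^{(\pm 1)}$ is concentrated on $\Omega^{\pm}_\infty$. Once these two pieces are in place, the formulas, the union statement, and the mutual singularity all drop out by linearity.

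For the mixture, I would start from the elementary identity
$$h^{(\gamma)}(y) = \frac{1+\gamma}{2}\, h^{(1)}(y) + \frac{1-\gamma}{2}\, h^{(-1)}(y),\qquad y\in\bR,$$
which is immediate from $h^{(\gamma)}(y)=h(y)+\gamma y/m^2$. Substituting into~\eqref{eq:def-h-trans} and splitting yields, for $x \in \cH^{(\gamma)}$ with $h^{(1)}(x), h^{(-1)}(x) > 0$,
$$\bP_x^{(\gamma)}|_{\cF_t} = \alpha_+(x)\, \bP_x^{(1)}|_{\cF_t} + \alpha_-(x)\, \bP_x^{(-1)}|_{\cF_t},$$
with $\alpha_\pm(x) := (1\pm\gamma)\, h^{(\pm 1)}(x)/(2 h^{(\gamma)}(x))$ and $\alpha_+ + \alpha_- = 1$. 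Because both sides are probability measures on $\cF_\infty$ agreeing on the $\pi$-system $\bigcup_{t>0}\cF_t$, Dynkin's theorem promotes this to equality on $\cF_\infty$. For $x=0$, the excursion-measure version of~\eqref{eq:def-h-trans} together with the normalization $n\sbra{h^{(\pm 1)}(X_t)}=1$ from Lemma~\ref{Lem:harmonic} gives the simpler $\bP_0^{(\gamma)} = \tfrac{1+\gamma}{2}\bP_0^{(1)} + \tfrac{1-\gamma}{2}\bP_0^{(-1)}$. Degenerate cases in which exactly one of $h^{(\pm 1)}(x)$ vanishes collapse the mixture to a single summand and are checked directly to match the claimed formula.

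The heart of the argument is the claim $\bP_x^{(1)}(\Omega^+_\infty)=1$ for $x\in\cH^{(1)}_0$; the $-1$ case is symmetric. The key device is
$$M_t := \frac{h^{(-1)}(X_t)}{h^{(1)}(X_t)},$$
which is a non-negative $\bP_x^{(1)}$-martingale: $h^{(-1)}(X_t)$ is a $\bP_x^0$-martingale by Lemma~\ref{Lem:harmonic}, while $h^{(1)}(X_t)/h^{(1)}(x)$ is the density $\cd\bP_x^{(1)}/\cd\bP_x^0$ on $\cF_t$, so the abstract change-of-measure principle makes $M_t$ a $\bP_x^{(1)}$-martingale of constant expectation $h^{(-1)}(x)/h^{(1)}(x)$. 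By Doob's theorem, $M_t \to M_\infty < \infty$ $\bP_x^{(1)}$-almost surely. I would then invoke the standard asymptotic $h(y)/|y|\to 1/m^2$ as $|y|\to\infty$ for centered L\'evy processes with finite variance, from which $M_t \to 0$ whenever $X_t\to+\infty$ and $M_t \to +\infty$ whenever $X_t\to-\infty$. Since $|X_t|\to\infty$ $\bP_x^{(1)}$-a.s.\ by Theorem~\ref{Thm:p-gamma-abs-infty}, the a.s.\ finite limit of $M_t$ rules out both $\Omega^-_\infty$ (on which $M_t\to\infty$) and $\Omega^{+,-}_\infty$ (on which $M_t$ oscillates between $0$ and $\infty$), forcing $\bP_x^{(1)}(\Omega^+_\infty)=1$.

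Putting the two ingredients together, the mixture decomposition gives $\bP_x^{(\gamma)}(\Omega^{\pm}_\infty) = \alpha_\pm(x)\cdot 1 + \alpha_\mp(x)\cdot 0 = \alpha_\pm(x)$ and similarly $\bP_0^{(\gamma)}(\Omega^{\pm}_\infty) = (1\pm\gamma)/2$, matching the stated formula; the identity $\alpha_+ + \alpha_- = 1$ then yields $\bP_x^{(\gamma)}(\Omega^+_\infty\cup\Omega^-_\infty)=1$ (in particular $\bP_x^{(\gamma)}(\Omega^{+,-}_\infty)=0$), and the mutual singularity of $\bP_x^{(1)}$ and $\bP_x^{(-1)}$ on $\cF_\infty$ is immediate from their concentration on the disjoint tail events $\Omega^+_\infty$ and $\Omega^-_\infty$. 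The main obstacle I anticipate is the asymptotic analysis of $h$ at $\pm\infty$ that controls the boundary behavior of $M_t$; this should be available either from~\cite{me} or through classical renewal theory applied to the ascending and descending ladder height processes of a centered finite-variance L\'evy process.
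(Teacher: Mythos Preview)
Your proposal is correct, and its core coincides with the paper's: both arguments hinge on the non-negative $\bP_x^{(\pm 1)}$-martingale $h^{(\mp 1)}(X_t)/h^{(\pm 1)}(X_t)$ from Lemma~\ref{Lem:P-g-mart}, whose a.s.\ finite limit, combined with the asymptotic $h(y)/|y|\to 1/m^2$ (this is exactly (\ref{Lem-item:h/x-infty}) of Lemma~\ref{Lem:h}, so the obstacle you flag at the end is already removed), rules out $\Omega^{\mp}_\infty$ and $\Omega^{+,-}_\infty$ under $\bP_x^{(\pm 1)}$. The two proofs diverge only in their treatment of intermediate $-1<\gamma<1$. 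The paper stays under $\bP_x^{(\gamma)}$ and uses the boundedness $h^{(\gamma_2)}(X_t)/h^{(\gamma)}(X_t)\le (1+|\gamma_2|)/(1-|\gamma|)$ to pass to the limit by dominated convergence, obtaining a linear equation in $\bP_x^{(\gamma)}(\Omega^{+}_\infty)$ and $\bP_x^{(\gamma)}(\Omega^{-}_\infty)$ which it solves together with their summing to $1$. You instead exploit the affine dependence of $h^{(\gamma)}$ on $\gamma$ to write $\bP_x^{(\gamma)}=\alpha_+\bP_x^{(1)}+\alpha_-\bP_x^{(-1)}$ on $\cF_\infty$, reducing the general case to the extremal ones already settled. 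Your route is a bit more structural and sidesteps the DCT step; the paper's route is slightly more self-contained in that it never needs to define $\bP_x^{(\pm 1)}$ at points where $h^{(\pm 1)}(x)=0$, so your degenerate-case bookkeeping does not arise.
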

Note that,
if \(m^2=\infty\), the process \(X\) can be oscillating
under \(\bP_x^{(\gamma)}=\bP_x^{(0)}\);
see, e.g., Theorem~\ref{Thm:stable-oscillate}.

\subsubsection{Short-time behaviors of the process \((X,\bP_x^{(\gamma)})\)}
The proofs of the
following Theorems~\ref{Thm:h^S/x},~\ref{Thm:gaussian-entrance},~\ref{Thm:entrance-one}
and~\ref{Thm:feller}
will be given in Section~\ref{Sec:short-time}.

We first deal with differential property at \(0\) of \(h\), which
is used for the discussion of short-time behaviors.
Since \(h\) and \(h^{(\gamma)}\) are subadditive, it holds that
\begin{align}
  h'(0\pm)                   & \coloneqq
  \lim_{x\to 0\pm}\frac{h(x)}{x}
  =\pm\sup_{x> 0}\frac{h(\pm x)}{x}      \\
  {h^{(\gamma)\prime}}(0\pm) & \coloneqq
  \lim_{x\to 0\pm}\frac{h^{(\gamma)}(x)}{x}
  =\pm\sup_{x> 0}\frac{h^{(\gamma)}(\pm x)}{x} =
  h'(0\pm)\pm \frac{\gamma}{m^2}.
\end{align}
By~(\ref{Lem-item:h/x-infty})
of Lemma~\ref{Lem:h}, we have
\begin{align}
  \abs{h'(0\pm)}                 & =\pm h'(0\pm) \in \sbra*{\frac{1}{m^2},\infty}, \\
  \abs{h^{(\gamma)\prime}(0\pm)} & =\pm h^{(\gamma)\prime}(0\pm)
  \in\sbra*{\frac{1\pm\gamma}{m^2},\infty}.
\end{align}
\begin{Thm}\label{Thm:h^S/x}
  It holds that
  \begin{align}\label{eq:h^S'}
    {h}'(0+)+\abs{{h}'(0-)}
    =\lim_{x\to 0+}\frac{h(x)+h(-x)}{x}=\frac{2}{\sigma^2}\in (0,\infty].
  \end{align}
  Consequently, for \(-1\le \gamma\le 1\), it holds that
  \begin{align}
    {h^{(\gamma)\prime}}(0+)+\abs{{h^{(\gamma)\prime}}(0-)}
    =\lim_{x\to 0+}\frac{h^{(\gamma)}(x)+h^{(\gamma)}(-x)}{x}
    =\frac{2}{\sigma^2}\in (0,\infty].
  \end{align}
\end{Thm}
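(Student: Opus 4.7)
The first equality is essentially tautological once one takes into account the definitions of $h'(0\pm)$ recalled just before the statement, together with subadditivity of $h$: for a nonnegative subadditive function on $(0,\infty)$, the limit at $0$ of $h(\pm x)/x$ coincides with the supremum, so $\lim_{x\to 0+} h(x)/x = h'(0+)$ and $\lim_{x\to 0+} h(-x)/x = \abs{h'(0-)}$. The corollary for $h^{(\gamma)}$ follows trivially since the linear correction $\gamma x/m^2$ is odd, so $h^{(\gamma)}(x)+h^{(\gamma)}(-x)=h(x)+h(-x)$. Thus the theorem reduces to the substantive claim
\begin{align}
  \lim_{x\to 0+}\frac{h(x)+h(-x)}{x}=\frac{2}{\sigma^2}.
\end{align}

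The approach is via the Fourier representation. Decomposing $\varPsi=\varPsi^S+\ci\varPsi^A$ with $\varPsi^S=\Re\varPsi\ge 0$, Fourier inversion of $r_q$ (available under assumption~\ref{item:assumption}) together with the definition $h_q(x)=r_q(0)-r_q(-x)$ gives, after symmetrizing $x$ and $-x$,
\begin{align}
  h_q(x)+h_q(-x)=\frac{1}{\pi}\int_\bR\frac{(q+\varPsi^S(\lambda))(1-\cos(\lambda x))}{\abs{q+\varPsi(\lambda)}^2}\,\cd\lambda.
\end{align}
Passing $q\to 0+$ yields the key identity
\begin{align}\label{eq:fourier-plan}
  h(x)+h(-x)=\frac{1}{\pi}\int_\bR\frac{\varPsi^S(\lambda)(1-\cos(\lambda x))}{\abs{\varPsi(\lambda)}^2}\,\cd\lambda,
\end{align}
and the scaling $\mu=\lambda x$ converts this into
\begin{align}
  \frac{h(x)+h(-x)}{x}=\frac{1}{\pi}\int_\bR(1-\cos\mu)\cdot\frac{\lambda^2\varPsi^S(\lambda)}{\mu^2\abs{\varPsi(\lambda)}^2}\,\cd\mu\qquad(\lambda=\mu/x).
\end{align}
By the L\'{e}vy--Khintchine form of $\varPsi$, one has $\varPsi(\lambda)/\lambda^2\to\sigma^2/2$ as $\abs{\lambda}\to\infty$, so the integrand converges pointwise in $\mu\ne 0$ to $2(1-\cos\mu)/(\sigma^2\mu^2)$. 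Combining with the standard identity $\int_\bR(1-\cos\mu)/\mu^2\,\cd\mu=\pi$ then gives the target value $2/\sigma^2$.

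The main obstacle is justifying the two limit interchanges. For the scaling step with $\sigma^2>0$: on $\abs{\lambda}\ge K$ (with $K$ chosen so that $\varPsi^S(\lambda)\ge(\sigma^2/4)\lambda^2$) one has $\varPsi^S/\abs{\varPsi}^2\le 4/(\sigma^2\lambda^2)$, which supplies a dominating function $C(1-\cos\mu)/\mu^2$; on the shrinking central region $\abs{\mu}\le Kx$ the bound $1-\cos\mu\le\mu^2/2$ absorbs the factor $\lambda^2=\mu^2/x^2$ and reduces matters to a local estimate of $\varPsi^S/\abs{\varPsi}^2$ near $\lambda=0$. When $\sigma^2=0$, the pointwise limit of the integrand is $+\infty$ and Fatou's lemma alone suffices to give $\liminf=+\infty=2/\sigma^2$. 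The same circle of estimates, together with the integrability granted by assumption~\ref{item:assumption}, handles the passage $q\to 0+$ in~\eqref{eq:fourier-plan}.
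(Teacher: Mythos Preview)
Your approach is correct and essentially identical to the paper's: the Fourier representation of $h(x)+h(-x)$ (which is exactly (\ref{Lem-item:h^S-repre}) of Lemma~\ref{Lem:h}, since $\Re(1/\varPsi)=\varPsi^S/\abs{\varPsi}^2$), the substitution $\xi=\lambda x$, Fatou's lemma when $\sigma^2=0$, and dominated convergence when $\sigma^2>0$. The only streamlining in the paper is in the DCT step: it uses the \emph{global} bound $\Re\varPsi(\lambda)\ge\sigma^2\lambda^2/2$ (valid for all $\lambda$, as the L\'evy-measure contribution to $\Re\varPsi$ is nonnegative), which immediately gives the single dominating function $2(1\wedge\xi^2)/(\sigma^2\xi^2)$ and avoids your split into far and central regions.
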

We remark that
Winkel~\cite[Lemma 1]{MR1894112} already showed that
\begin{align}
  \lim_{x\to 0+}\frac{h_q(x)+h_q(-x)}{x} = \frac{2}{\sigma^2}, \quad q > 0.
\end{align}
By Theorem~\ref{Thm:h^S/x}, we see that
\(\sigma^2>0\) implies \(\abs{{h^{\prime}}(0\pm)}\le 2/\sigma^2 <\infty\)
and that \(\sigma^2=0\) implies \({{h^{\prime}}(0+)}=\infty\)
or \(-{{h^{\prime}}(0-)}=\infty\).

Define
\begin{align}
  \Omega^{+}_0
   & = \cbra{\exists t_0>0 \text{ such that } 0<\forall t<t_0, \, X_t>0}, \\
  \Omega^{-}_0
   & = \cbra{\exists t_0>0 \text{ such that } 0<\forall t<t_0, \, X_t<0}, \\
  \Omega^{+,-}_0
   & = \cbra{\exists\cbra{t_n} \text{ with } t_n\to {0+} \text{ such that
  } \forall n,\,
  X_{t_n}X_{t_{n+1}}<0}.
\end{align}
Then \(\Omega^{+}_0\), \(\Omega^{-}_0\) and \(\Omega^{+,-}_0\) are
mutually disjoint and
we have \(\Omega^{+}_0\cup \Omega^{-}_0\cup \Omega^{+,-}_0 = \cD\).
\begin{Thm}\label{Thm:gaussian-entrance}
  Assume \(m^2<\infty\) and \(\sigma^2>0\).
  Then, for \(-1\le\gamma\le 1\), it holds that
  \(\bP_0^{(\gamma)}(\Omega^{+,-}_0)=0\),
  \begin{align}
    \bP_0^{(\gamma)}(\Omega^{+}_0) = \frac{\sigma^2}{2}{h^{(\gamma)\prime}}(0+)
    \quad\text{and}\quad
    \bP_0^{(\gamma)}(\Omega^{-}_0) = \frac{\sigma^2}{2}\abs{{h^{(\gamma)\prime}}(0-)}.
  \end{align}
\end{Thm}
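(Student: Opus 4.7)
The strategy is to use the $h$-transform identity $\bP_0^{(\gamma)}|_{\cF_t}=h^{(\gamma)}(X_t)\cdot n|_{\cF_t}$ to express the germ probabilities via the excursion measure, and then analyse the $t\to 0+$ asymptotics. For any $t>0$,
\[
\bP_0^{(\gamma)}(X_t>0)=n\sbra{h^{(\gamma)}(X_t)\mathbf{1}_{\cbra{X_t>0}}},
\qquad
\bP_0^{(\gamma)}(X_t<0)=n\sbra{h^{(\gamma)}(X_t)\mathbf{1}_{\cbra{X_t<0}}}.
\]
The plan is to (a) show these converge as $t\to 0+$ to $\tfrac{\sigma^2}{2}h^{(\gamma)\prime}(0+)$ and $\tfrac{\sigma^2}{2}\abs{h^{(\gamma)\prime}(0-)}$ respectively, (b) prove $\bP_0^{(\gamma)}(\Omega^{+,-}_0)=0$, and (c) identify the germ probabilities via bounded convergence.

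For (a), subadditivity gives $h^{(\gamma)}(y)\le h^{(\gamma)\prime}(0+)\,y$ for $y>0$, yielding the upper estimate
\(
n\sbra{h^{(\gamma)}(X_t)\mathbf{1}_{\cbra{X_t>0}}}\le h^{(\gamma)\prime}(0+)\cdot n\sbra{X_t\mathbf{1}_{\cbra{X_t>0}}}.
\)
The crucial lemma I would establish is
\[
\lim_{t\to0+}n\sbra{X_t\mathbf{1}_{\cbra{X_t>0}}}=\frac{\sigma^2}{2},
\]
which I would extract by Laplace-transforming in $t$, relating the transform to a one-sided analogue of Winkel's quantity $(h_q(x)+h_q(-x))/x$ as $x\to 0+$, and then applying an Abelian/Tauberian argument. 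For the matching lower bound I would combine $h^{(\gamma)}(y)/y\to h^{(\gamma)\prime}(0+)$ as $y\to 0+$ with a concentration estimate $\lim_{t\to0+}n\sbra{X_t\mathbf{1}_{\cbra{X_t>\delta}}}=0$ for each $\delta>0$, obtainable from standard excursion-theoretic control of $\sup_{s\le t}\abs{X_s}$.

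For (b), the aim is $n(\Omega^{+,-}_0)=0$, which immediately yields $\bP_0^{(\gamma)}(\Omega^{+,-}_0)=n\sbra{h^{(\gamma)}(X_t)\mathbf{1}_{\Omega^{+,-}_0}}=0$ via the $h$-transform identity (since $\Omega^{+,-}_0\in\cF_{0+}\subset\cF_t$). Heuristically, when $\sigma^2>0$ the initial segment of an excursion under $n$ is dominated by the Brownian part $\sigma B_t$, which forces a constant sign on some $(0,t_0)$: any sign change must occur via a jump across zero, but if such jumps accumulated at $t=0+$, the continuous Brownian part between consecutive jumps would force $X$ to hit zero, which under $n$ terminates the excursion. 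A rigorous version proceeds by truncating the small-jump component and coupling with the driving Brownian motion. This is the main obstacle of the proof.

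Given (a) and (b), on $\cD\setminus\Omega^{+,-}_0$ we have $\mathbf{1}_{\cbra{X_t>0}}\to\mathbf{1}_{\Omega^+_0}$ $\bP_0^{(\gamma)}$-almost surely as $t\to 0+$, so bounded convergence combined with (a) delivers $\bP_0^{(\gamma)}(\Omega^+_0)=\tfrac{\sigma^2}{2}h^{(\gamma)\prime}(0+)$; the calculation for $\Omega^-_0$ is symmetric. Theorem~\ref{Thm:h^S/x} provides the consistency check $\tfrac{\sigma^2}{2}(h^{(\gamma)\prime}(0+)+\abs{h^{(\gamma)\prime}(0-)})=1$, so the three germ probabilities indeed sum to $1$.
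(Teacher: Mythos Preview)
Your approach is genuinely different from the paper's, and the gap you identify at (b) is indeed the main obstacle---one the paper circumvents entirely with a much shorter argument. The paper does not analyse $n$ directly but instead exploits Lemma~\ref{Lem:P-g-mart}: for any $\gamma_1\ne\gamma_2$ in $[-1,1]$, the ratio $M_t=h^{(\gamma_2)}(X_t)/h^{(\gamma_1)}(X_t)$ is a non-negative $\bP_0^{(\gamma_1)}$-martingale of mean $1$. Backward martingale convergence forces $\lim_{t\to 0+}M_t$ to exist $\bP_0^{(\gamma_1)}$-a.s.\ (and in $L^1$). Since $X_t\to 0$ as $t\to 0+$ and, because $\sigma^2>0$, the function $h^{(\gamma_2)}/h^{(\gamma_1)}$ has \emph{distinct} one-sided limits at $0$, namely $(\abs{h'(0\pm)}\pm\gamma_2/m^2)/(\abs{h'(0\pm)}\pm\gamma_1/m^2)$, on $\Omega_0^{+,-}$ the ratio $M_t$ would have two different subsequential limits, contradicting convergence; hence $\bP_0^{(\gamma_1)}(\Omega_0^{+,-})=0$. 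The identity $\bP_0^{(\gamma_1)}\sbra{\lim_{t\to 0+}M_t}=1$ then gives a linear equation in $\bP_0^{(\gamma_1)}(\Omega_0^\pm)$ which, combined with $\bP_0^{(\gamma_1)}(\Omega_0^+)+\bP_0^{(\gamma_1)}(\Omega_0^-)=1$ and Theorem~\ref{Thm:h^S/x}, solves to the stated values.

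Your route, by contrast, requires two substantial inputs you do not establish: the asymptotic $n\sbra{X_t 1_{\cbra{X_t>0}}}\to\sigma^2/2$ (plausible, but the Tauberian argument is only sketched) and, more seriously, $n(\Omega_0^{+,-})=0$. The heuristic for the latter---that sign changes accumulating at $0$ would force the Brownian component to hit $0$---is not convincing as stated: a jump across zero followed by the Gaussian part drifting away need not produce a zero of $X$ before the next jump, and turning your truncation/coupling sketch into a proof would be real work. The paper's martingale trick sidesteps any pathwise analysis of the excursion and delivers both $\bP_0^{(\gamma)}(\Omega_0^{+,-})=0$ and the exact probabilities in a few lines.
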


\begin{Thm}\label{Thm:entrance-one}
  Assume \(m^2<\infty\).
  If
  \(h'(0+)=\infty\) and \(\abs{h'(0-)}<\infty\),
  then \(\bP_0^{(\gamma)}(\Omega^{+}_0)=1\) for \(-1\le\gamma\le 1\).
  If \(h'(0+)<\infty\) and \(\abs{h'(0-)}=\infty\),
  then \(\bP_0^{(\gamma)}(\Omega^{-}_0)=1\) for \(-1\le\gamma\le 1\).
\end{Thm}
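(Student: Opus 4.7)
The plan is to reduce the path-level claim to a short-time estimate under the excursion measure $n$, exploiting the linear upper bound on $h^{(\gamma)}$ that the finiteness of $|h^{(\gamma)\prime}(0-)|$ provides on the negative half-line.

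By the symmetry $X \leftrightarrow -X$, which swaps the roles of $h'(0+)$ and $|h'(0-)|$, it suffices to prove the first assertion. So I assume $h'(0+) = \infty$ and $|h'(0-)| < \infty$; Theorem~\ref{Thm:h^S/x} then forces $\sigma^2 = 0$, and Theorem~\ref{Thm:gaussian-entrance} is inapplicable. A direct calculation from $h^{(\gamma)}(x) = h(x) + \gamma x / m^2$ gives $h^{(\gamma)\prime}(0+) = \infty$ and $|h^{(\gamma)\prime}(0-)| = |h'(0-)| - \gamma/m^2 < \infty$ for every $\gamma \in [-1,1]$.

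Writing $\tau \coloneqq \inf\{s > 0 : X_s < 0\}$, one has $\{\tau > 0\} = \Omega_0^+$ modulo $\bP_0^{(\gamma)}$-null sets, since $\bP_0^{(\gamma)}$ never hits $0$. So proving $\bP_0^{(\gamma)}(\Omega_0^+) = 1$ reduces to $\lim_{t \to 0+} \bP_0^{(\gamma)}(\tau \le t) = 0$. Via $\bP_0^{(\gamma)}|_{\cF_t} = h^{(\gamma)}(X_t)\cdot n|_{\cF_t}$, the strong Markov property of $n$ at $\tau$, and the harmonicity of $h^{(\gamma)}$ for the killed semigroup (Lemma~\ref{Lem:harmonic}), this probability equals $n[h^{(\gamma)}(X_\tau);\, \tau \le t]$. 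On $\{\tau \le t\}$, either $X_\tau = 0$ (continuous creeping across $0$, which contributes nothing since $h^{(\gamma)}(0) = 0$) or $X_\tau < 0$ (a downward jump across $0$); in the latter case the supremum characterization $|h^{(\gamma)\prime}(0-)| = \sup_{y > 0} h^{(\gamma)}(-y)/y$ gives $h^{(\gamma)}(X_\tau) \le |h^{(\gamma)\prime}(0-)| \cdot |X_\tau|$. The task therefore reduces to showing the short-time undershoot bound
\[
n\bigl[|X_\tau|;\, \tau \le t,\, X_\tau < 0\bigr] \longrightarrow 0 \quad (t \to 0+).
\]

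The main obstacle is this undershoot estimate under the infinite measure $n$. My approach is to dominate $|X_\tau| \le |\Delta X_\tau|$ on the downward-jump event (since $X_{\tau-} \ge 0$ on $\{\tau > 0\}$), then apply the compensation formula for the jump measure under $n$ to bound the quantity by an explicit short-time integral that vanishes with $t$. The integrability required by the compensation formula is tied to the fluctuation-theoretic identification of $h$ with the downward ladder-height renewal function, for which $|h'(0-)| < \infty$ is precisely the hypothesis supplying the needed moment control. Combining the linear bound with the vanishing undershoot yields the first assertion; the second follows by the symmetry observation above.
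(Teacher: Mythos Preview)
Your route is quite different from the paper's. The paper never looks at undershoots or the compensation formula; it simply reruns the martingale argument from the proof of Theorem~\ref{Thm:gaussian-entrance}. For distinct $\gamma_1,\gamma_2\in[-1,1]$ the ratio $M_t=h^{(\gamma_2)}(X_t)/h^{(\gamma_1)}(X_t)$ is a non-negative $\bP_0^{(\gamma_1)}$-martingale of mean $1$ (Lemma~\ref{Lem:P-g-mart}), so $\lim_{t\to 0+}M_t$ exists a.s. Under the hypothesis $h'(0+)=\infty$ one computes $\lim_{x\to 0+}h^{(\gamma_2)}(x)/h^{(\gamma_1)}(x)=1$ while the limit from the left is some $c\ne 1$; existence of $\lim_{t\to 0+}M_t$ then forces $\bP_0^{(\gamma_1)}(\Omega_0^{+,-})=0$, and the mean-$1$ identity $1\cdot\bP_0^{(\gamma_1)}(\Omega_0^+)+c\cdot\bP_0^{(\gamma_1)}(\Omega_0^-)=1$ gives $\bP_0^{(\gamma_1)}(\Omega_0^-)=0$.

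Your argument, by contrast, is circular at the key step. The strong Markov property of $n$ at $\tau$ is available only on $\{0<\tau<T_0\}$; on $\{\tau=0\}$ there is no Markov transition to invoke (the post-$0$ process under $n$ is governed by $n$ itself, not by $\bP_0^0$). The correct decomposition is
\[
\bP_0^{(\gamma)}(\tau\le t)
= n\bigl[h^{(\gamma)}(X_\tau);\,0<\tau\le t\bigr]
+ n\bigl[h^{(\gamma)}(X_t);\,\tau=0\bigr].
\]
The first term equals $\bP_0^{(\gamma)}(0<\tau\le t)$, which tends to $0$ trivially since $\{0<\tau\le t\}\downarrow\emptyset$; your undershoot bound is aimed at this term and therefore does no real work. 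The second term is independent of $t$ and equals $\bP_0^{(\gamma)}(\tau=0)=\bP_0^{(\gamma)}\bigl((\Omega_0^+)^c\bigr)$, which is exactly the quantity you set out to show vanishes. Since $\{\tau=0\}=\Omega_0^-\cup\Omega_0^{+,-}$, the substantive task is to rule out short-time oscillation, and your excursion reduction never touches it. The paper's martingale detects oscillation through the nonexistence of $\lim_{t\to0+}M_t$, which is precisely why that approach succeeds where the excursion reduction stalls.
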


In the case \(\abs{h^{\prime}(0\pm)} =\infty\),
we do not obtain general properties.
We obtain the oscillating short-time behavior
under some technical assumptions.
\begin{Thm}\label{Thm:feller}
  Let \(-1\le\gamma\le 1\). Assume the following four assertions hold:
  \begin{enumerate}
    \item \(\displaystyle
          \liminf_{x\to\infty}h^{(\gamma)}(x)>0\)
          and \(\displaystyle
          \liminf_{x\to-\infty}h^{(\gamma)}(x)>0\);\label{item:cond:h-gamma-infty}
    \item \(\displaystyle\lim_{x\to
            0}\frac{h(x)}{\abs{x}}=\infty\),
          i.e., \(h'(0+)=-h'(0-)=\infty\);\label{item:cond:h/x-infty}
    \item \(\displaystyle\lim_{x\to 0}\frac{h_q(x+y)-h_q(y)}{h(x)}=1_{\cbra{y=0}}\) for
          all \(q>0\);\label{item:cond:h_q/h}
    \item \(\displaystyle0<\liminf_{x\to 0}\frac{h(-x)}{h(x)}\le \limsup_{x\to
            0}\frac{h(-x)}{h(x)}<\infty\).\label{item:cond:h-/h}
  \end{enumerate}
  Then it holds that \(\bP_0^{(\gamma)}(\Omega^{+,-}_0)=1\).
\end{Thm}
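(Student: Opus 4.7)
The plan is to combine a Feller continuity property for the family $(\bP_x^{(\gamma)})_{x\in\cH_0^{(\gamma)}}$ at $x=0$ with Blumenthal's 0--1 law, and then use conditions~(\ref{item:cond:h/x-infty}) and~(\ref{item:cond:h-/h}) to rule out the one-sided short-time events $\Omega_0^{\pm}$, leaving $\Omega_0^{+,-}$ with full $\bP_0^{(\gamma)}$-measure.

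First I would establish Feller continuity at $x=0$: for each $q>0$ and each bounded continuous $f$ on $\cH_0^{(\gamma)}$, the $q$-resolvent $U_q^{(\gamma)}f(x):=\int_0^\infty\ce^{-qt}\bP_x^{(\gamma)}\sbra{f(X_t)}\,\cd t$ converges to $U_q^{(\gamma)}f(0)$ as $x\to 0$ in $\cH^{(\gamma)}$. Using~\eqref{eq:def-h-trans}, the Hunt-type decomposition $r_q^0(x,y)=r_q(y-x)-\frac{r_q(-x)}{r_q(0)}r_q(y)$ of the killed resolvent density, and the identity $r_q(y-x)-r_q(y)=h_q(-y)-h_q(x-y)$, one splits $h^{(\gamma)}(x)U_q^{(\gamma)}f(x)$ into two integrals. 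The first contains $h_q(x-y)-h_q(-y)$, which by condition~(\ref{item:cond:h_q/h}) applied at $-y\neq 0$ is $o(h(x))$ pointwise and becomes negligible after dividing by $h^{(\gamma)}(x)$, using $h^{(\gamma)}(x)/h(x)\to 1$ from condition~(\ref{item:cond:h/x-infty}). The second converges to $\frac{1}{r_q(0)}\int r_q(y)h^{(\gamma)}(y)f(y)\,\cd y$, which matches $U_q^{(\gamma)}f(0)$ through the excursion formula and the normalization~\eqref{eq:n-normalize}. Condition~(\ref{item:cond:h-gamma-infty}) supplies the domination at infinity needed for dominated convergence.

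With Feller continuity and the right-continuity of $(\cF_t)$, Blumenthal's 0--1 law applies to $\bP_0^{(\gamma)}$: each of the three disjoint $\cF_{0+}$-events $\Omega_0^+$, $\Omega_0^-$, $\Omega_0^{+,-}$ partitioning $\cD$ has probability $0$ or $1$. Since $\bP_0^{(\gamma)}(X_s<0\;\forall s\in(0,t])\le n\sbra{h^{(\gamma)}(X_t)1_{\cbra{X_t<0}}}$ and the left-hand side increases to $\bP_0^{(\gamma)}(\Omega_0^-)$ as $t\to 0+$, one obtains $\bP_0^{(\gamma)}(\Omega_0^-)\le\liminf_{t\to 0+}n\sbra{h^{(\gamma)}(X_t)1_{\cbra{X_t<0}}}$. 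Because the two quantities $n\sbra{h^{(\gamma)}(X_t)1_{\cbra{X_t\gtrless 0}}}$ sum to $1$, ruling out $\bP_0^{(\gamma)}(\Omega_0^-)=1$ reduces to showing $\limsup_{t\to 0+}n\sbra{h^{(\gamma)}(X_t)1_{\cbra{X_t>0}}}>0$, and symmetrically for $\Omega_0^+$. Conditions~(\ref{item:cond:h/x-infty}) (giving $h^{(\gamma)}\sim h$ near $0$) and~(\ref{item:cond:h-/h}) (giving $h(-x)$ and $h(x)$ of comparable order as $x\to 0$), together with a quantitative estimate for the entrance law of $n$ extracted from the resolvent representation of Step~1, make both of these $\limsup$'s strictly positive. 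Therefore $\bP_0^{(\gamma)}(\Omega_0^\pm)=0$ and hence $\bP_0^{(\gamma)}(\Omega_0^{+,-})=1$.

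The main obstacle will be Step~1: at $x=0$ the harmonic function $h^{(\gamma)}$ vanishes, making the $h$-transform singular there, and condition~(\ref{item:cond:h_q/h})---a pointwise asymptotic---must be upgraded to sufficient uniformity in $y$ to permit dominated convergence under the integral, a task for which conditions~(\ref{item:cond:h-gamma-infty}) and~(\ref{item:cond:h-/h}) provide the needed bounds away from and near the origin respectively.
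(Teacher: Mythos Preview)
The paper's proof is entirely different and much shorter: it invokes a result of Ikeda--Watanabe stating that under \(\bP_x\) the process oscillates (changes sign infinitely often) immediately before \(T_0\); this passes to the excursion measure \(n\), and then time-reversal of excursion paths (via Chen--Fukushima--Ying~\cite{MR2397787}) converts ``oscillation just before \(T_0\)'' into ``oscillation just after time \(0\)''. This yields \(n((\Omega_0^{+,-})^c)=0\) directly, whence \(\bP_0^{(\gamma)}(\Omega_0^{+,-})=1\), with no appeal to the Feller property or Blumenthal's law.

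Your Step~1 is essentially the content of the paper's Appendix~A (Theorem~\ref{Thm:fellerp}), where conditions (\ref{item:cond:h-gamma-infty})--(\ref{item:cond:h-/h}) are used precisely to establish the Feller property of \((T_t^{(\gamma)})\); that step is sound, and Step~2 is standard. The genuine gap is Step~3. To rule out \(\bP_0^{(\gamma)}(\Omega_0^-)=1\) you must show \(\limsup_{t\to0+}\bP_0^{(\gamma)}(X_t>0)>0\), i.e.\ \(\limsup_{t\to0+}\int_{(0,\infty)} h^{(\gamma)}(y)\,n(X_t\in\cd y)>0\). Conditions (\ref{item:cond:h/x-infty}) and (\ref{item:cond:h-/h}) only control the ratio \(h^{(\gamma)}(y)/h^{(\gamma)}(-y)\) for \(y\) near \(0\); they say nothing about whether the \emph{entrance law} \(n(X_t\in\cd y)\) itself places comparable mass on both sides of \(0\) as \(t\to0+\). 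The resolvent representation you obtained in Step~1 gives only Laplace-transform information in \(t\), not a pointwise lower bound for small \(t\), and extracting such a bound by a Tauberian argument would require regularity not contained in the hypotheses. The paper's time-reversal argument sidesteps this entirely by working at the level of the \(\sigma\)-finite path measure \(n\) and never needing control of one-time marginals; what your route would buy, if Step~3 could be closed, is a proof that stays within the semigroup/resolvent framework and makes the role of each hypothesis transparent.
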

Note that,
If \(m^2<\infty\), (\ref{Lem-item:h/x-infty}) of Lemma~\ref{Lem:h}
implies that the condition~(\ref{item:cond:h-gamma-infty}) of Theorem~\ref{Thm:feller}
always holds for \(-1<\gamma<1\).

\subsection{Examples}
Before proceeding with the proofs of the results, we introduce some examples.
\subsubsection{Brownian motions}
Assume \((X,\bP)\) is a standard Brownian motion.
Then \(\sigma^2=m^2=1\) and \(h(x)=\abs{x}\).
By Theorems~\ref{Thm:h-gamma-process-infty} and~\ref{Thm:gaussian-entrance},
it holds that, for \(-1\le \gamma\le 1\),
\begin{align}
  \bP_0^{(\gamma)}(\Omega_\infty^+) & =
  \bP_0^{(\gamma)}(\Omega_0^+)=\frac{1+\gamma}{2},                                       \\
  \bP_0^{(\gamma)}(\Omega_\infty^-) & = \bP_0^{(\gamma)}(\Omega_0^-)=\frac{1-\gamma}{2}.
\end{align}
Since the Brownian motion has no jumps, the process \((X,\bP_x^{(\gamma)})\)
also has no jumps. Thus the avoiding zero process \((X,\bP_x^{(\gamma)})\)
does not change the sign. In fact, we have
\begin{align}
  \bP_0^{(\gamma)}= \frac{1+\gamma}{2}\bP_0^{\mathrm{Bes}\, +}
  +\frac{1-\gamma}{2}\bP_0^{\mathrm{Bes}\, -}.
\end{align}
Moreover, by Theorem~\ref{Thm:gaussian-entrance}, it holds that,
for \(-1\le \gamma\le 1\),
\begin{align}
  \bP_x^{(\gamma)}(\Omega_\infty^+)=1_{\cbra{x>0}},
  \quad
  \bP_x^{(\gamma)}(\Omega_\infty^-)=1_{\cbra{x<0}},
  \quad x\ne 0.
\end{align}
In fact, we have \(\bP_x^{(\gamma)} = \bP_x^{\mathrm{Bes}\, +}\) for \(x>0\)
and \(\bP_x^{(\gamma)} = \bP_x^{\mathrm{Bes}\, -}\) for \(x<0\).

\subsubsection{Stable processes}
Assume \((X,\bP)\) is strictly stable of index \(1<\alpha<2\).
Then \(m^2=\infty\) and the L\'{e}vy measure \(\nu\) can be written as
\begin{align}
  \nu(\cd x)=\begin{cases}
    c_{+} x^{-1-\alpha}\, \cd x      & \text{for \(x\in (0,\infty)\)},   \\
    c_{-} \abs{x}^{-1-\alpha}\,\cd x & \text{for \(x\in (-\infty, 0)\)},
  \end{cases}
\end{align}
where \(c_+\) and \(c_-\) are non-negative constants such that
\(c_++c_->0\).
The characteristic exponent \(\varPsi\) can be expressed as
\begin{align}
  \varPsi(\lambda)= c\abs{\lambda}^\alpha
  \rbra*{1-\ci \beta\sgn(\lambda)\tan \frac{\alpha\pi}{2}},
\end{align}
where \(c=-(c_++c_-)\Gamma(-\alpha)\cos(\pi\alpha/2)\)
and \(\beta = (c_+-c_-)/(c_++c_-)\);
see, e.g.,~\cite[Section 1.2]{MR3155252}.
We write \(c'=-c\beta \tan (\alpha\pi/2)\).
Then as a special case of~\eqref{eq:h_q}, it holds that
\begin{align}
  h_q(x)=\frac{1}{\pi}\int_0^\infty
  \Re \rbra*{\frac{1-e^{\ci \lambda x}}{q+(c+\ci c')\abs{\lambda}^\alpha}}
  \,\cd \lambda.
\end{align}
The dominated convergence theorem
implies that
\begin{align}\label{eq:stable-h-inte}
  h(x)=\frac{1}{\pi}\int_0^\infty
  \Re \rbra*{\frac{1-e^{\ci \lambda x}}{(c+\ci c')\abs{\lambda}^\alpha}}
  \,\cd \lambda.
\end{align}
Thus we have
\begin{align}\label{eq:h-repre-stable}
  h(x)-h_q(x)
  = \frac{1}{\pi} \int_0^\infty
  \Re\rbra*{\frac{1-e^{\ci \lambda x}}{(c+\ci c')\abs{\lambda}^\alpha}
  \frac{q}{q+(c+\ci c')\abs{\lambda}^\alpha}}
  \, \cd \lambda.
\end{align}
Hence it is obvious that
\begin{align}
  h_q'(x)=h'(x)+v(x), \quad x\in\bR\setminus\cbra{0},
\end{align}
where \(v(x)\) is a bounded continuous function.
Yano~\cite{MR3072331} calculated~\eqref{eq:stable-h-inte}
and obtained
\begin{align}\label{eq:stable-h-Yano}
  h(x)=-\frac{\alpha\Gamma(-\alpha)\sin(\pi\alpha/2)(1-\beta\sgn(x))}
  {c(1+\beta^2\tan^2(\pi\alpha/2))}
  \abs{x}^{\alpha-1};
\end{align}
see also Pant\'{\i}~\cite[Example 5.1]{MR3689384}.

Assume \((X,\bP)\) is spectrally positive (resp.\ negative), i.e.,
\(\beta=1\) (resp. \(\beta=-1\)). Then by~\eqref{eq:stable-h-Yano},
we have \(\cH_0^{(0)}=(-\infty, 0]\)
(resp. \(\cH_0^{(0)}=[0,\infty)\)).
On the other hand, assume \((X,\bP)\) is not spectrally one-sided, i.e., \(-1<\beta<1\).
Then the functions \(h\) and \(h_q\) satisfy the assumptions
of Theorem~\ref{Thm:feller}. Hence we obtain the following theorem:

\begin{Thm}
  Assume \((X,\bP)\) is a strictly stable process
  of index \(1<\alpha<2\).
  If \((X,\bP)\) is spectrally positive (resp.\ negative), it holds that
  \begin{align}
    \bP^{(0)}_0(\Omega_0^{-})=1 \quad \text{(resp. \( \bP^{(0)}_0(\Omega_0^{+})=1 \)).}
  \end{align}
  If \((X,\bP)\) is not spectrally one-sided, it holds that
  \begin{align}
    \bP^{(0)}_0(\Omega_0^{+,-})=1.
  \end{align}
\end{Thm}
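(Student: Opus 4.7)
Since $1<\alpha<2$ forces $m^2=\infty$, the measures $\bP_0^{(\gamma)}$ all coincide with $\bP_0^{(0)}$ for $-1\le\gamma\le 1$, so it suffices to analyze the case $\gamma=0$; the explicit formula~\eqref{eq:stable-h-Yano} for $h$ is the central tool throughout. For the spectrally one-sided case (say $\beta=1$),~\eqref{eq:stable-h-Yano} yields $h(x)=0$ for $x>0$ and $h(x)>0$ for $x<0$, so $\cH^{(0)}=(-\infty,0)$. From the identity $\bP_0^{(0)}|_{\cF_t}=h(X_t)\cdot n|_{\cF_t}$ one sees that $\bP_0^{(0)}(X_t>0)=0$ for each fixed $t>0$, since $h\equiv 0$ on $(0,\infty)$, and combined with the fact that the process avoids zero we obtain $X_t<0$ almost surely. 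Applying the Markov property at time $\varepsilon>0$ and using that $\bP_x^{(0)}$ for $x<0$ is supported on paths remaining in $\cH^{(0)}=(-\infty,0)$, one concludes $X_s<0$ for all $s\ge\varepsilon$ almost surely; letting $\varepsilon\downarrow 0$ along a countable sequence gives $\bP_0^{(0)}(\Omega_0^-)=1$. The case $\beta=-1$ is symmetric.

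For the non-spectrally one-sided case ($-1<\beta<1$), my strategy is to verify the four hypotheses of Theorem~\ref{Thm:feller} with $\gamma=0$, for then Theorem~\ref{Thm:feller} delivers $\bP_0^{(0)}(\Omega_0^{+,-})=1$ directly. From~\eqref{eq:stable-h-Yano} we have $h(x)=K_{\sgn(x)}\abs{x}^{\alpha-1}$ with both constants $K_\pm$ strictly positive when $\abs{\beta}<1$. Hypothesis~(\ref{item:cond:h-gamma-infty}) is immediate from $\alpha>1$, hypothesis~(\ref{item:cond:h/x-infty}) from $\alpha<2$, and hypothesis~(\ref{item:cond:h-/h}) follows at once since $h(-x)/h(x)\equiv K_-/K_+\in(0,\infty)$ for all $x>0$.

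The main obstacle is hypothesis~(\ref{item:cond:h_q/h}), namely $(h_q(x+y)-h_q(y))/h(x)\to 1_{\cbra{y=0}}$ as $x\to 0$ for every $q>0$. I would exploit the representation~\eqref{eq:h-repre-stable} together with the identity $h_q'(x)=h'(x)+v(x)$ for $x\neq 0$, where $v$ is a bounded continuous function (as noted just after~\eqref{eq:h-repre-stable}): integrating from $0$ and using $h_q(0)=h(0)=0$ yields $h_q(x)-h(x)=O(x)$ near the origin. For $y=0$ this gives $h_q(x)/h(x)=1+O(\abs{x}^{2-\alpha})\to 1$, while for $y\neq 0$ the $C^1$-smoothness of $h_q$ at $y$ gives $h_q(x+y)-h_q(y)=O(x)$, whose quotient with $h(x)\asymp\abs{x}^{\alpha-1}$ is $O(\abs{x}^{2-\alpha})\to 0$ since $\alpha<2$. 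This closes the argument.
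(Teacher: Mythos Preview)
Your proposal is correct and follows essentially the same approach as the paper. For the spectrally one-sided case the paper simply observes that~\eqref{eq:stable-h-Yano} forces $\cH_0^{(0)}$ to be a half-line, after which the fact (stated just after~\eqref{eq:def-h-trans}) that the $h^{(\gamma)}$-transformed process never exits $\cH^{(\gamma)}$ immediately gives $\bP_0^{(0)}(\Omega_0^\mp)=1$; your Markov-property argument reproves this confinement in the particular case at hand. For the case $-1<\beta<1$ the paper likewise reduces to verifying the hypotheses of Theorem~\ref{Thm:feller}, relying on~\eqref{eq:stable-h-Yano} and the relation $h_q'=h'+v$ with $v$ bounded continuous derived from~\eqref{eq:h-repre-stable}; your verification of condition~(\ref{item:cond:h_q/h}) via $h_q(x)-h(x)=\int_0^x v=O(x)$ and the $C^1$-smoothness of $h_q$ away from the origin is exactly the computation the paper leaves implicit.
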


Furthermore,
we obtain the following long-time behavior:
\begin{Thm}\label{Thm:stable-oscillate}
  Assume \((X,\bP)\) is a strictly stable process
  of index \(1<\alpha<2\).
  If \((X,\bP)\) is spectrally positive (resp.\ negative), it holds that
  \begin{align}
    \bP^{(0)}_0(\Omega_\infty^{-})=1 \quad
    \text{(resp. \( \bP^{(0)}_0(\Omega_\infty^{+})=1 \)).}
  \end{align}
  If \((X,\bP)\) is not spectrally one-sided, it holds that
  \begin{align}\label{eq:stable-oscillate}
    \bP^{(0)}_0(\Omega_\infty^{+,-})=1.
  \end{align}
\end{Thm}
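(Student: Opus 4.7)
The first two assertions will follow at once from the support of the $h$-transform. When $(X,\bP)$ is spectrally positive ($\beta=1$), equation~\eqref{eq:stable-h-Yano} gives $h(x)=0$ for $x>0$, so $\cH^{(0)}=(-\infty,0)$ and the density $h(X_t)$ appearing in~\eqref{eq:def-h-trans} forces $X_t\in(-\infty,0)$ a.s.\ under $\bP_0^{(0)}$ for every $t>0$; combined with $\abs{X_t}\to\infty$ a.s.\ from Theorem~\ref{Thm:p-gamma-abs-infty}, this forces $X_t\to-\infty$, yielding $\bP_0^{(0)}(\Omega_\infty^-)=1$. The spectrally negative case is symmetric.

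For the non-spectrally-one-sided case the plan has two steps: (A) establish that $(X,\bP_0^{(0)})$ is $1/\alpha$-self-similar, and (B) combine this with the short-time oscillation $\bP_0^{(0)}(\Omega_0^{+,-})=1$ from the previous theorem and with $\abs{X_t}\to\infty$ to transfer oscillation to infinity. For (A), fix $c>0$ and let $s_c\colon\cD\to\cD$ denote the scaling map $s_c\omega(t)=c^{-1/\alpha}\omega(ct)$. Three ingredients enter: (i) $(X,\bP)$ itself is $1/\alpha$-self-similar; (ii) by~\eqref{eq:stable-h-Yano}, $h(c^{1/\alpha}y)=c^{(\alpha-1)/\alpha}h(y)$ for $c>0$; and (iii) It\^o's excursion measure obeys $n\circ s_c^{-1}=c^{-(\alpha-1)/\alpha}n$, which I plan to verify by combining the stable semigroup scaling with the normalization~\eqref{eq:n-normalize} and the asymptotic $r_q(0)\asymp q^{1/\alpha-1}$. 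Assembling these for a bounded $\cF_s$-measurable functional $F_s$, and using $h(X_{cs})=c^{(\alpha-1)/\alpha}\,h(X_s)\circ s_c$, one computes
\begin{align*}
\bP_0^{(0)}\sbra{F_s\circ s_c}
 &= n\sbra{h(X_{cs})\,(F_s\circ s_c)}
  = c^{(\alpha-1)/\alpha}\,n\sbra{(h(X_s)\,F_s)\circ s_c} \\
 &= c^{(\alpha-1)/\alpha}\,(n\circ s_c^{-1})\sbra{h(X_s)\,F_s}
  = \bP_0^{(0)}\sbra{F_s},
\end{align*}
so that $s_c(X)$ and $X$ have the same law under $\bP_0^{(0)}$.

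For step (B), set $E_T=\cbra{\exists\,s,t>T:X_sX_t<0}$. Self-similarity gives $\bP_0^{(0)}(E_T)=\bP_0^{(0)}(E_{cT})$ for every $c>0$, so $T\mapsto\bP_0^{(0)}(E_T)$ is constant on $(0,\infty)$. As $T\downarrow 0$ the sets $E_T$ increase to $\cbra{\exists\,s,t>0:X_sX_t<0}\supset\Omega_0^{+,-}$, which has full $\bP_0^{(0)}$-measure by the previous theorem, so the constant equals $1$; hence $\bP_0^{(0)}(\bigcap_{T>0}E_T)=1$, meaning that almost surely both signs are attained at arbitrarily late times. Extracting subsequences $s_n,t_n\to\infty$ with $X_{s_n}>0$ and $X_{t_n}<0$ and invoking $\abs{X_t}\to\infty$ then forces $\limsup_{t\to\infty}X_t=+\infty$ and $\liminf_{t\to\infty}X_t=-\infty$, which is exactly $\Omega_\infty^{+,-}$. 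The main obstacle I anticipate is ingredient (iii), the scaling of It\^o's excursion measure under $s_c$; once this identity is secured, the remaining steps reduce to routine manipulations with the stable self-similarity and the definition~\eqref{eq:def-h-trans}.
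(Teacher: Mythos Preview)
Your proof is correct and follows the paper's route: for~\eqref{eq:stable-oscillate} the paper simply invokes the argument of~\cite[Corollary~1.4]{MR2603019}, which is precisely the self-similarity of $(X,\bP_0^{(0)})$ transferring the short-time oscillation of the preceding theorem to long-time oscillation, combined with $\abs{X_t}\to\infty$ from Theorem~\ref{Thm:p-gamma-abs-infty}. On your flagged obstacle~(iii), note that the normalization~\eqref{eq:n-normalize} together with $r_q(0)=Cq^{1/\alpha-1}$ only fixes the scaling constant once you already know that $n\circ s_c^{-1}$ is a scalar multiple of $n$; that proportionality follows from the self-similarity of the underlying stable process and the uniqueness (up to normalization) of It\^o's excursion measure, so you should make this step explicit.
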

To prove~\eqref{eq:stable-oscillate}, we use the same discussion as
the proof of~\cite[Corollary 1.4]{MR2603019}.
\subsubsection{Recurrent spectrally negative processes}
Let \((X,\bP)\) be a spectrally negative L\'{e}vy process, i.e.,
\(\nu(0,\infty)=0\), satisfying the assumption~\ref{item:assumption}.
Then,~\cite[Example 5.2]{MR3689384} says that
\begin{align}
  h(x) = W(x)-\frac{x}{m^2},
\end{align}
where \(W(x)\) stands for the scale function of \(X\).
Since \(W(x)=0\) for \(x\le 0\), we have
\(h(x)=\abs{x}/{m^2}\in [0,\infty)\) for \(x\le 0\).
If \(m^2=\infty\), we have \(\cH_0^{(0)}=[0,\infty)\)
and hence it holds that
\begin{align}
  \bP^{(0)}_x(\Omega_\infty^{+})=\bP^{(0)}_0(\Omega_0^{+})=1,\quad
  x\in [0,\infty).
\end{align}

\subsubsection{Symmetric processes}
We consider the case \((X,\bP)\) is symmetric and satisfies
the condition~\ref{item:assumption}.
Then \(h(x)=h(-x)\).
If \(\sigma^2>0\), we have
\(h'(0+)=-h'(0-)=1/\sigma^2\)
and hence, by Theorem~\ref{Thm:gaussian-entrance},
\begin{align}
  \bP_0^{(\gamma)}(\Omega_0^{+})=\bP_0^{(\gamma)}(\Omega_0^{-})=\frac{1}{2},
  \quad -1\le \gamma\le 1.
\end{align}
On the other hand, we assume \(\sigma^2=0\)
and \(\lambda \mapsto \Re \varPsi(\lambda)\) is eventually non-decreasing.
Then, Theorem~\ref{Thm:h^S/x} implies that
the function \(h\) satisfies
the assumption
(\ref{item:cond:h/x-infty})
of Theorem~\ref{Thm:feller}.
Moreover,
Lemma 4.4 and (i) of Lemma 6.2 of Yano~\cite{MR2603019} states that
\(h\) and \(h_q\) satisfy the
assumption~(\ref{item:cond:h_q/h})
of Theorem~\ref{Thm:feller}.
The function \(h\) obviously satisfies the
assumption~(\ref{item:cond:h-/h})
of Theorem~\ref{Thm:feller}.
Hence, if~(\ref{item:cond:h-gamma-infty}) of Theorem~\ref{Thm:feller}
also holds, it holds that
\begin{align}
  \bP_0^{(\gamma)}(\Omega_0^{+,-})=1,\quad -1\le\gamma\le 1.
\end{align}

\subsection{Outline of the paper}

The paper is organized as follows.
In Section~\ref{Sec:preliminaries}, we prepare
general properties of L\'{e}vy processes and
some preliminary facts of the renormalized zero resolvent \(h\).
In Sections~\ref{Sec:pf} and~\ref{Sec:short-time},
we prove the main results for long-time behaviors and
short-time behaviors, respectively.
In Section~\ref{Sec:resolvent} as an appendix, we investigate
the resolvent density under \(\bP_x^{(\gamma)}\).
In Section~\ref{Sec:pf-meander} as another appendix,
we will give the proof of Theorem~\ref{Thm:limit-P0}.

\section{Preliminaries}\label{Sec:preliminaries}
\subsection{General properties of L\'{e}vy processes}

Let \((X=(X_t,t\ge 0),\bP_x)\) denote the canonical representation
of a one-dimensional L\'{e}vy process starting from \(x\in\bR\)
on the c\`{a}dl\`{a}g path space \(\cD\)
and we write \(\bP=\bP_0\).
For \(t\ge 0\), we denote by \(\cF_t^X = \sigma(X_s, 0\le s\le t)\) the natural
filtration of \(X\) and we write \(\cF_t = \bigcap_{s>t} \cF_s^X\)
and \(\cF_\infty = \sigma(\bigcup_{t>0}\cF_t)\).
It is well-known that we have
\begin{align}
  \bP\sbra{\ce^{\ci\lambda X_t}}=\ce^{-t\varPsi(\lambda)},
  \quad \text{for \(t\ge 0\) and \(\lambda\in\bR\),}
\end{align}
where \(\varPsi(\lambda)\) denotes the characteristic exponent
of \(X\) given by the L\'{e}vy--Khintchine formula
\begin{align}
  \varPsi(\lambda)
  = \ci v \lambda
  + \frac{1}{2} \sigma^2 \lambda^2
  + \int_\bR \rbra*{1 - \ce^{\ci \lambda x} + \ci \lambda x 1_{\cbra{\abs{x}< 1}}}
  \nu(\cd x)
\end{align}
for some constants \(v \in \bR\) and \(\sigma^2 \ge 0\)
and a characteristic measure \(\nu\) on \(\bR\)
which satisfies \(\nu(\cbra{0})=0\) and
\begin{align}
  \int_\bR \rbra*{x^2 \wedge 1} \nu(\cd x) < \infty.
\end{align}
The measure \(\nu\) is called
a L\'{e}vy measure.
See, e.g.,~\cite{MR1406564,MR3155252}.

We consider the following four conditions:
\begin{enumerate}[label=\textbf{(A\arabic*)}]
  \item The process \((X,\bP)\) is not a compound Poisson process;\label{item:cond-not-CPP}
  \item \(0\) is regular for itself, i.e., \(\bP(T_0=0)=1\);\label{item:cond-regular}
  \item \(\displaystyle \int_\bR \Re\rbra*{\frac{1}{q+\varPsi(\lambda)}}\,\cd\lambda<\infty\)
        for all \(q>0\);\label{item:cond-Reinte}
  \item We have either \(\sigma^2>0\) or
        \(\displaystyle \int_{(-1,1)}\abs{x}\nu(\cd x)=\infty\).\label{item:cond-nu}
\end{enumerate}
Then the following lemma is well-known:
\begin{Lem}\label{Lem:equi-four-assump}
  The following three assertions hold:
  \begin{enumerate}
    \item The conditions~\ref{item:cond-not-CPP} and~\ref{item:cond-regular} hold
          if and only if the conditions~\ref{item:cond-Reinte}
          and~\ref{item:cond-nu} hold;\label{Lem-item:equi-1234}
    \item Under the condition~\ref{item:cond-Reinte},
          the condition~\ref{item:cond-regular}
          holds if and only if the
          condition~\ref{item:cond-nu} holds;\label{Lem-item:equi-24}
    \item   The condition~\ref{item:cond-Reinte} holds if and only if
          \((X,\bP)\) has the bounded \(q\)-resolvent density \(r_q\),
          which satisfies
          \begin{align}
            \int_\bR f(x) r_q(x) \, \cd x
            = \bP\sbra*{\int_0^\infty \ce^{-qt}f(X_t) \, \cd t},\quad q>0,
          \end{align}
          for all non-negative measurable functions \(f\).
          Moreover, under the condition~\ref{item:cond-Reinte},
          the condition~\ref{item:cond-regular} holds if and only if
          \(x \mapsto r_q(x)\) is continuous.\label{Lem-item:equi-resol}
  \end{enumerate}
\end{Lem}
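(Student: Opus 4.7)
The plan is to prove \ref{Lem-item:equi-resol} first, since it contains the analytic core, and then deduce \ref{Lem-item:equi-24} and \ref{Lem-item:equi-1234} by combining it with the classical Kesten--Bretagnolle criterion for regularity of \(0\).

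For \ref{Lem-item:equi-resol}, I would start from the elementary identity
\begin{align}
\int_\bR \ce^{\ci \lambda x} U^q(\cd x) = \bP\sbra*{\int_0^\infty \ce^{-q t} \ce^{\ci \lambda X_t} \,\cd t} = \frac{1}{q + \varPsi(\lambda)},
\end{align}
where \(U^q\) is the \(q\)-resolvent measure of \((X,\bP)\). Since \(U^q\) is a positive finite measure of total mass \(1/q\), its Fourier transform \(\hat U^q(\lambda) = 1/(q+\varPsi(\lambda))\) is bounded by \(1/q\). From the pointwise estimate
\begin{align}
\abs*{\hat U^q(\lambda)}^2 = \frac{1}{\abs{q+\varPsi(\lambda)}^2} \le \frac{1}{q}\, \Re \rbra*{\frac{1}{q+\varPsi(\lambda)}},
\end{align}
the assumption \ref{item:cond-Reinte} forces \(\hat U^q \in L^2(\bR)\), hence by Plancherel \(U^q\) has an \(L^2\) density, and the Fourier inversion \(r_q(x) = (2\pi)^{-1}\int \ce^{-\ci \lambda x}\,\cd\lambda / (q+\varPsi(\lambda))\) (which is absolutely convergent in the symmetrized pairing) produces a bounded representative. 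Conversely, boundedness of \(r_q\) implies \(\hat U^q \in L^2\), and the real-part identity above reverses to yield \ref{item:cond-Reinte}. The second assertion of \ref{Lem-item:equi-resol} is the classical theorem of Kesten--Bretagnolle--Hawkes: once \(r_q\) exists and is bounded, continuity of \(x \mapsto r_q(x)\) is equivalent to \(0\) being regular for itself, which is the statement \(\bP(T_0 = 0) = 1\). I would cite \cite{MR1406564} (Bertoin, Chap.~II) for this.

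For \ref{Lem-item:equi-24}, I would invoke Bretagnolle's theorem in the form: for a L\'{e}vy process that is not compound Poisson, \(0\) is regular for itself if and only if the integrability \ref{item:cond-Reinte} holds AND the process is not a drifted process with zero diffusion and \(\int_{(-1,1)} \abs{x} \nu(\cd x) < \infty\). Under \ref{item:cond-Reinte}, the resolvent density \(r_q\) already exists by \ref{Lem-item:equi-resol}, so the only obstruction to \ref{item:cond-regular} is of the form ruled out by \ref{item:cond-nu}. Concretely, if \(\sigma^2 = 0\) and \(\int_{(-1,1)} \abs{x} \nu(\cd x) < \infty\), the process has bounded variation with a genuine drift coefficient \(d \ne 0\) (the case \(d=0\) being incompatible with \ref{item:cond-Reinte} as the process would then be constant or reduce to a CPP), and a bounded-variation L\'{e}vy process with nonzero drift is well known to have \(\bP(T_0 > 0) = 1\); conversely under \ref{item:cond-nu} the process has unbounded variation, and one checks that \(r_q\) is continuous at \(0\), giving \ref{item:cond-regular}.

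Finally \ref{Lem-item:equi-1234} follows by a quick bootstrap: the implication \(\ref{item:cond-Reinte} + \ref{item:cond-nu} \Rightarrow \ref{item:cond-not-CPP} + \ref{item:cond-regular}\) uses \ref{Lem-item:equi-24} for \ref{item:cond-regular}, and notes that a CPP has \(\varPsi\) bounded so \(\int \Re(1/(q+\varPsi)) \,\cd\lambda = \infty\), contradicting \ref{item:cond-Reinte}. For the reverse, \ref{item:cond-not-CPP} + \ref{item:cond-regular} implies via Bretagnolle's theorem that \ref{item:cond-Reinte} holds, and then \ref{Lem-item:equi-24} yields \ref{item:cond-nu}. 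The main obstacle is the Kesten--Bretagnolle part, and since this is a standard result I would appeal to \cite{MR1406564,MR3155252} rather than reprove it.
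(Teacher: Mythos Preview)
The paper does not give its own proof of this lemma; it simply cites Kesten~\cite{MR0272059} and Bretagnolle~\cite{MR0368175} for parts~(\ref{Lem-item:equi-1234}) and~(\ref{Lem-item:equi-24}), and Theorems~II.16 and~II.19 of Bertoin~\cite{MR1406564} for part~(\ref{Lem-item:equi-resol}). Your proposal, which sketches the Fourier-analytic construction of \(r_q\) and then appeals to the Kesten--Bretagnolle criterion, is entirely in line with those references and hence with the paper's treatment; the only caveat is that your ``converse'' step in~(\ref{Lem-item:equi-resol}) (boundedness of \(r_q\) \(\Rightarrow\) \ref{item:cond-Reinte}) does not follow from \(\hat U^q\in L^2\) alone---one should instead symmetrize and use that \(r_q(x)+r_q(-x)\) is a bounded nonnegative integrable function whose Fourier transform \(2\Re(1/(q+\varPsi))\) is nonnegative, which forces the latter to be integrable.
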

For the proofs of~(\ref{Lem-item:equi-1234}) and~(\ref{Lem-item:equi-24})
of Lemma~\ref{Lem:equi-four-assump}, see Kesten~\cite{MR0272059} and
Bretagnolle~\cite{MR0368175}.
For the proof of~(\ref{Lem-item:equi-resol}) of Lemma~\ref{Lem:equi-four-assump},
see Theorems II.16 and II.19 of Bertoin~\cite{MR1406564}.

Throughout this paper, we always assume the condition~\ref{item:assumption}.
This implies that \((X,\bP)\) has the bounded continuous resolvent density
which is given by
\begin{align}
  r_q(x)
  =\frac{1}{\pi}\int_0^\infty
  \Re\rbra*{\frac{\ce^{-\ci\lambda x}}{q+\varPsi(\lambda)}}
  \, \cd \lambda
\end{align}
for all \(q > 0\) and \(x\in \bR\);
see, e.g., Winkel~\cite[Lemma 2]{MR1894112}
and Tsukada~\cite[Colorally 15.1]{MR3838874}.
Combining this and Lemma~\ref{Lem:equi-four-assump},
we see the condition~\ref{item:assumption}
implies~\ref{item:cond-not-CPP}--\ref{item:cond-nu}.
Tsukada~\cite[Lemma 15.5]{MR3838874} also proved that
the condition~\ref{item:assumption} implies
\begin{align}\label{eq:inte-of-varPsi}
  \int_0^\infty \abs*{\frac{1\wedge \lambda^2}{\varPsi(\lambda)}}
  \, \cd \lambda < \infty;
\end{align}
see also~\cite[Lemma 2.4]{me}.

Under the condition~\ref{item:assumption}, we denote
by \(L=(L_t, t\ge 0)\) local time at \(0\) normalized by
the equation
\begin{align}\label{eq:regularity-of-L}
  \bP_x\sbra*{\int_0^\infty \ce^{-qt} \cd L_t} = r_q(-x),
  \quad x \in \bR;
\end{align}
see, e.g.,~\cite[Section V]{MR1406564}.
Let \(n\) denote the characteristic measure of
excursions away from \(0\), called It\^{o}'s excursion measure
(see, e.g.,~\cite[Section IV.4]{MR1406564}).
Then the equation~\eqref{eq:n-normalize} holds.

\subsection{The renormalized zero resolvent}
We define
\begin{align}\label{eq:h_q}
  h_q(x)=r_q(0)-r_q(-x)
  =\frac{1}{\pi}\int_0^\infty
  \Re\rbra*{\frac{1-\ce^{\ci \lambda x}}{q+\varPsi(\lambda)}} \, \cd \lambda.
\end{align}
Since we have
\begin{align}\label{eq:-qT_0}
  \bP_x\sbra{\ce^{-qT_0}}=\frac{r_q(-x)}{r_q(0)} \ge 0
\end{align}
(see, e.g., Bertoin~\cite[Colorally II.18]{MR1406564}),
the function \(h_q\) is non-negative.
In addition, \(h_q\) is subadditive, i.e., \(h_q(x+y)\le h_q(x)+h_q(y)\)
for \(x,y\in \bR\); see, e.g., the proof of
Lemma 3.3 in~\cite{MR3689384} and
the proofs of (ii) and (iii) of Theorem 1.1 in~\cite{me}.

We denote the second moment of \(X_1\) by
\begin{align}
  m^2 = \bP\sbra{X_1^2}=\sigma^2+\int_\bR x^2\nu(\cd x)\in (0,\infty].
\end{align}

\begin{Lem}[The renormalized zero resolvent]\label{Lem:h}
  Assume the condition~\ref{item:assumption} is satisfied.
  Then the following assertions hold:
  \begin{enumerate}
    \item for \(x\in\bR\), the limit \( h(x)\coloneqq \lim_{q\to 0+} h_q(x)\) exists and is finite,
          which is called the \emph{renormalized zero resolvent};\label{Lem-item:h-exist}
    \item \(h\) is non-negative, continuous and subadditive \((h(x+y)\le h(x)+h(y)\) for \(x,y\in\bR)\)
          and \(h(0)=0\);\label{Lem-item:h-subadditive}
    \item \(\displaystyle h(x)+h(-x)
          =\frac{2}{\pi}\int_0^\infty
          \Re\rbra*{\frac{1-\cos \lambda x}{\varPsi(\lambda)}} \, \cd \lambda\),
          for \(x\in\bR\);\label{Lem-item:h^S-repre}
    \item if \(m^2<\infty\), it holds that
          \(\displaystyle h(x)=\frac{1}{\pi}\int_0^\infty
          \Re\rbra*{\frac{1-\ce^{\ci \lambda x}}{\varPsi(\lambda)}} \, \cd \lambda\),
          for \(x\in\bR\);
    \item \(\displaystyle \lim_{x\to\infty}\frac{h(x)}{\abs{x}}=\frac{1}{m^2}
          \in [0,\infty)\);\label{Lem-item:h/x-infty}
    \item \(\displaystyle \lim_{y\to\pm\infty}
          \cbra{h(x+y)-h(y)}=\pm \frac{x}{m^2}\in\bR\).\label{Lem-item:h-diff-infty}
  \end{enumerate}
\end{Lem}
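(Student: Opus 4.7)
The backbone of the proof is the Fourier representation~\eqref{eq:h_q}
\[
  h_q(x)=\frac{1}{\pi}\int_0^\infty \Re\!\left(\frac{1-\ce^{\ci\lambda x}}{q+\varPsi(\lambda)}\right)\cd\lambda,
\]
combined with the integrability estimate~\eqref{eq:inte-of-varPsi}.  Because $\Re\varPsi\ge 0$, one has $\abs{q+\varPsi(\lambda)}\ge\abs{\varPsi(\lambda)}$ for every $q>0$, so any majorant of the $q=0$ integrand controls the $q>0$ integrand uniformly.  The plan is to pass $q\to 0+$ inside the integral by dominated convergence wherever the integrand admits such a uniform majorant, and to fall back on the probabilistic identity $h_q(x)=r_q(0)\bP_x\sbra{1-\ce^{-qT_0}}$ from~\eqref{eq:-qT_0}, together with excursion theory, in the remaining cases.

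I would first prove~(\ref{Lem-item:h^S-repre}).  Summing $h_q(x)+h_q(-x)$ cancels the odd part of $1-\ce^{\ci\lambda x}$ and leaves the non-negative integrand $(1-\cos\lambda x)\Re\sbra{1/(q+\varPsi(\lambda))}$, dominated uniformly in $q$ by $(1-\cos\lambda x)/\abs{\varPsi(\lambda)}\le C_x(1\wedge\lambda^2)/\abs{\varPsi(\lambda)}$, which is integrable by~\eqref{eq:inte-of-varPsi}.  Dominated convergence gives~(\ref{Lem-item:h^S-repre}) and finiteness of $h(x)+h(-x)$.  For~(iv), $m^2<\infty$ combined with recurrence forces $\bP\sbra{X_1}=0$, whence $\Re\varPsi(\lambda)=\frac{m^2}{2}\lambda^2+o(\lambda^2)$ and $\Im\varPsi(\lambda)=o(\lambda^2)$ near~$0$; a Taylor expansion then shows the real part of the integrand is bounded near~$0$, so DCT applies directly to~\eqref{eq:h_q} to yield~(iv), and existence~(\ref{Lem-item:h-exist}) is then an immediate corollary when $m^2<\infty$.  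In general (including $m^2=\infty$), I would invoke the existence of the renormalized zero resolvent under~\ref{item:assumption} established in Yano~\cite{MR2603019}, Pant\'i~\cite{MR3689384}, Tsukada~\cite{MR3838874}, and~\cite[Lemma 2.4]{me}.  The properties of~(\ref{Lem-item:h-subadditive})---non-negativity, $h(0)=0$, subadditivity and continuity---then descend from the analogous properties of $h_q$ by passing to the limit uniformly on compact sets.

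For~(\ref{Lem-item:h/x-infty}) under $m^2<\infty$, I substitute $\mu=\lambda\abs{x}$ in~(iv): using $\varPsi(\mu/\abs{x})\sim\frac{m^2}{2}\mu^2/\abs{x}^2$ as $\abs{x}\to\infty$, the integrand converges pointwise to $2(1-\cos\mu)\abs{x}^2/(m^2\mu^2)$, and combining with the prefactor $1/(\pi\abs{x}^2)$ and the standard identity $\int_0^\infty(1-\cos\mu)/\mu^2\,\cd\mu=\pi/2$ delivers $h(x)/\abs{x}\to 1/m^2$.  When $m^2=\infty$, the same substitution applied to~(\ref{Lem-item:h^S-repre}), together with $\lambda^2/\abs{\varPsi(\lambda)}\to 0$ as $\lambda\to 0$ (which would otherwise yield $m^2<\infty$), produces $(h(x)+h(-x))/\abs{x}\to 0$, and the non-negativity of $h$ splits the sum.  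For~(\ref{Lem-item:h-diff-infty}), starting from $h(x+y)-h(y)=\frac{1}{\pi}\int_0^\infty\Re\sbra{\ce^{\ci\lambda y}(1-\ce^{\ci\lambda x})/\varPsi(\lambda)}\cd\lambda$, I isolate the $1/\lambda$-singularity $(1-\ce^{\ci\lambda x})/\varPsi(\lambda)\sim -2\ci x/(m^2\lambda)$ near~$0$ and use a Riemann--Lebesgue-type argument for the remainder, extracting the limit $\pm x/m^2$ as $y\to\pm\infty$.  The main obstacle throughout is the existence~(\ref{Lem-item:h-exist}): a direct Fourier DCT on $h_q(x)$ alone is defeated near $\lambda=0$ by the term $\sin(\lambda x)\,\Im\varPsi(\lambda)/\abs{\varPsi(\lambda)}^2$, which can be as large as $1/\abs{\lambda}$ in the absence of stronger moment hypotheses, and this is precisely where the probabilistic form through~\eqref{eq:-qT_0} or the cited existence results are needed.
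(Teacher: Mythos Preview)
The paper does not prove this lemma: immediately after the statement it writes ``The proof of Lemma~\ref{Lem:h} can be found in Theorems 1.1 and 1.2 and Lemma 3.3 of~\cite{me}.'' Your sketch is thus to be compared with the argument in~\cite{me}, and for the crucial existence~(\ref{Lem-item:h-exist}) you already defer to that same circle of references.

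The parts you do spell out have the right shape, but several dominated-convergence steps are asserted rather than justified. For~(iv) the claim that the real part of the integrand is \emph{bounded} near~$0$ is not correct as stated: under $m^2<\infty$ one only has $\Im\varPsi(\lambda)=o(\lambda^2)$, so the contribution $\sin(\lambda x)\,\Im\varPsi(\lambda)/\abs{\varPsi(\lambda)}^2$ is of order $o(1/\lambda)$, which need not be bounded. It \emph{is} integrable --- from $\int y^2\,\nu(\cd y)<\infty$ one obtains $\int_0^1\abs{\Im\varPsi(\lambda)}\lambda^{-3}\,\cd\lambda<\infty$ by bounding $\abs{\lambda y-\sin\lambda y}\le\min\rbra{\abs{\lambda y}^3/6,\,2\abs{\lambda y}}$ and applying Fubini --- but that is a genuine estimate, not a Taylor remark, and you yourself flag exactly this term as the obstruction to~(\ref{Lem-item:h-exist}). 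For~(\ref{Lem-item:h/x-infty}) your substitution identifies the correct pointwise limit, but unlike the paper's proof of Theorem~\ref{Thm:h^S/x} (which exploits $\Re\varPsi(\lambda)\ge\sigma^2\lambda^2/2$) there is no lower bound $\abs{\varPsi(\lambda)}\ge c\lambda^2$ available to serve as a majorant. For~(\ref{Lem-item:h-diff-infty}) your Fourier formula presupposes $m^2<\infty$, leaving the case $m^2=\infty$ (limit~$0$) unaddressed, and in~(\ref{Lem-item:h-subadditive}) the appeal to ``uniform convergence on compact sets'' to transfer continuity from $h_q$ to $h$ assumes what must be proved. None of these gaps is fatal, but closing them essentially reproduces the work in~\cite{me,MR3689384,MR3838874}, which is precisely why the paper cites rather than proves.
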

The proof of Lemma~\ref{Lem:h} can be found in
Theorems 1.1 and 1.2 and Lemma 3.3 of~\cite{me}.

We define, for \(-1\le \gamma\le 1\),
\begin{align}\label{eq:def-h-g}
  h^{(\gamma)}(x)=h(x)+\frac{\gamma}{m^2}x,\quad x\in\bR.
\end{align}
By Lemma~\ref{Lem:h}, the function
\(h^{(\gamma)}\) is subadditive, \(h^{(\gamma)}(0)=0\)
and
\begin{gather}\label{eq:h-g/x-infty}
  \lim_{x\to\pm\infty} \frac{h^{(\gamma)}(x)}{\abs{x}} = \frac{1\pm\gamma}{m^2}.
\end{gather}
By subadditivity of \(h^{(\gamma)}\) and by~\eqref{eq:h-g/x-infty}, we also have
\begin{align}\label{eq:h-g-plus}
  h^{(\gamma)}(\pm x)\ge \frac{1\pm \gamma}{m^2}{x} \ge 0, \quad \text{for all \(x\ge 0\)}.
\end{align}

For \(-1\le\gamma\le 1\),
we define \(\cH^{(\gamma)} = \cbra{x\in\bR\colon h^{(\gamma)}(x)>0 }\) and
\(\cH^{(\gamma)}_0 = \cH^{(\gamma)} \cup \cbra{0}\).
By recurrence of \(X\), continuity and subadditivity of \(h\) and~\eqref{eq:harmonic},
\(\cH^{(\gamma)}_0\) is either \(\bR\), \([0,\infty)\) or \((-\infty,0]\).
In addition, if \(m^2<\infty\) and \(-1<\gamma<1\), then~\eqref{eq:h-g-plus}
implies that \(\cH^{(\gamma)}_0=\bR\).

Then we can define the \(h^{(\gamma)}\)-transformed process
given by~\eqref{eq:def-h-trans}.


\section{The long-time behaviors}\label{Sec:pf}

We prepare some important \(\bP_x^{(\gamma)}\)-martingale
which is used for investigating the path behaviors of the process.
Recall that we always assume the condition~\ref{item:assumption}.
\begin{Lem}\label{Lem:P-g-mart}
  Let \(-1\le \gamma\le 1\) and \(x\in \cH_0^{(\gamma)}\).
  Then \((\frac{1}{h^{(\gamma)}(X_t)},t> 0)\)
  is a non-negative \(\bP_x^{(\gamma)}\)-supermartingale.
  Moreover,
  for \(\gamma_1, \gamma_2\in [-1,1]\),
  the process
  \((\frac{h^{(\gamma_2)}(X_t)}{h^{(\gamma_1)}(X_t)},t>0)\)
  is a non-negative \(\bP_x^{(\gamma_1)}\)-martingale,
  and its mean is \(\frac{h^{(\gamma_2)}(x)}{h^{(\gamma_1)}(x)}\)
  if \(x\in \cH^{(\gamma_1)}\)
  and is \(1\) if \(x=0\).
\end{Lem}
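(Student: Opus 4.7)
Both assertions reduce to direct calculations in which the defining Doob $h$-transform is combined with the harmonic identities supplied by Lemma~\ref{Lem:harmonic}. For the supermartingale claim, fix $0<s<t$ and a bounded non-negative $\cF_s$-measurable $G$; for $x\in\cH^{(\gamma)}$ the first step is to compute
\[
\bP_x^{(\gamma)}\sbra*{\frac{G}{h^{(\gamma)}(X_t)}}
= \bP_x^0\sbra*{\frac{G}{h^{(\gamma)}(X_t)}\cdot\frac{h^{(\gamma)}(X_t)}{h^{(\gamma)}(x)}}
= \frac{1}{h^{(\gamma)}(x)}\bP_x^0\sbra*{G\,1_{\cbra{X_t\in\cH^{(\gamma)}}}},
\]
and then apply the simple Markov property of the killed process at time $s$. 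The pointwise estimate $\bP_y^0\sbra{1_{\cbra{X_{t-s}\in\cH^{(\gamma)}}}}\le 1_{\cbra{y\in\cH^{(\gamma)}}}$, trivial for $y\in\cH^{(\gamma)}$ and, for $y\notin\cH^{(\gamma)}_0$, a consequence of $\bP_y^0\sbra{h^{(\gamma)}(X_{t-s})}=h^{(\gamma)}(y)=0$ together with non-negativity of $h^{(\gamma)}$, collapses the right-hand side into $\bP_x^{(\gamma)}\sbra{G/h^{(\gamma)}(X_s)}$, which is exactly the supermartingale inequality. The case $x=0$ goes through verbatim with It\^{o}'s excursion measure $n$ replacing $\bP_0^0$ and the Markov property of $n$ applied at time $s>0$.

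For the ratio martingale, assume $m^2<\infty$, so that $h^{(\gamma_2)}$ is non-negative on $\bR$. The same change of measure gives, for $x\in\cH^{(\gamma_1)}$,
\[
\bP_x^{(\gamma_1)}\sbra*{\frac{h^{(\gamma_2)}(X_t)}{h^{(\gamma_1)}(X_t)}\,G}
= \frac{1}{h^{(\gamma_1)}(x)}\bP_x^0\sbra*{h^{(\gamma_2)}(X_t)\,G},
\]
and the harmonic identity $\bP_x^0\sbra{h^{(\gamma_2)}(X_t)\mid\cF_s}=h^{(\gamma_2)}(X_s)$ from Lemma~\ref{Lem:harmonic} replaces $h^{(\gamma_2)}(X_t)$ by $h^{(\gamma_2)}(X_s)$; reversing the change of measure then yields the martingale identity $\bP_x^{(\gamma_1)}\sbra*{\frac{h^{(\gamma_2)}(X_t)}{h^{(\gamma_1)}(X_t)}\,G}=\bP_x^{(\gamma_1)}\sbra*{\frac{h^{(\gamma_2)}(X_s)}{h^{(\gamma_1)}(X_s)}\,G}$. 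The stated mean follows by taking $G\equiv 1$ and letting $s\downarrow 0$, or directly from $\bP_x^0\sbra{h^{(\gamma_2)}(X_t)}=h^{(\gamma_2)}(x)$; for $x=0$ the same argument with $n$ in place of $\bP_0^0$ uses the companion identity $n\sbra{h^{(\gamma_2)}(X_t)}=1$ to give the mean $1$.

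The only mild delicacy to watch is that $1/h^{(\gamma)}(X_t)$ is a priori defined only on $\cbra{X_t\in\cH^{(\gamma)}}$; however, since under $\bP_x^{(\gamma)}$ the process stays in $\cH^{(\gamma)}$ almost surely and the factor $h^{(\gamma)}(X_t)/h^{(\gamma)}(X_t)$ systematically collapses into the indicator $1_{\cbra{X_t\in\cH^{(\gamma)}}}$ in the computations above, this poses no genuine obstacle. Integrability is automatic: the supermartingale expectation is bounded by $1/h^{(\gamma)}(x)$ (or $n(T_0>t)<\infty$ if $x=0$) and the martingale expectation equals $h^{(\gamma_2)}(x)/h^{(\gamma_1)}(x)$ (or $1$ if $x=0$).
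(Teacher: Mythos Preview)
Your proof is correct and follows essentially the same route as the paper: undo the Doob $h$-transform to land in $\bP_x^0$ (or $n$), use the inclusion $\{T_0>t\}\subset\{T_0>s\}$ for the supermartingale inequality, and invoke the $\bP_x^0$-martingale property of $h^{(\gamma_2)}(X_\cdot)$ from Lemma~\ref{Lem:harmonic} for the ratio martingale. If anything, your explicit bookkeeping of the indicator $1_{\{X_t\in\cH^{(\gamma)}\}}$ via the estimate $\bP_y^0\bigl(X_{t-s}\in\cH^{(\gamma)}\bigr)\le 1_{\{y\in\cH^{(\gamma)}\}}$ is slightly more careful than the paper's own display, which writes $\bP_x^{(\gamma)}\sbra{F_s/h^{(\gamma)}(X_t)}=\frac{1}{h^{(\gamma)}(x)}\bP_x\sbra{F_s;\,T_0>t}$ without comment.
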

\begin{proof}
  Recall that we have \(P_x^{(\gamma)}(T_{\bR\setminus \cH^{(\gamma)}}=\infty)=1\)
  for \(x\in \cH^{(\gamma)}_0\); see just after~\eqref{eq:def-h-trans}.
  This implies \(h^{(\gamma)}(X_t)\ne 0\),
  \(\bP_x^{(\gamma)}\)-a.s.
  We first assume \(x\in \cH^{(\gamma)}\).
  Let \(0<s<t\) and let \(F_s\) be a non-negative bounded \(\cF_s\)-measurable functional.
  Then we have
  \begin{align}
    \bP_x^{(\gamma)}\sbra*{\frac{1}{h^{(\gamma)}(X_t)}F_s}
    = \frac{1}{h^{(\gamma)}(x)}\bP_x\sbra{F_s; T_0 > t}
    \le\frac{1}{h^{(\gamma)}(x)} \bP_x\sbra{F_s; T_0 > s}
    = \bP_x^{(\gamma)}\sbra*{\frac{1}{h^{(\gamma)}(X_s)}F_s},
  \end{align}
  which implies that \((\frac{1}{h^{(\gamma)}(X_t)}, t>0)\) is a
  non-negative
  \(\bP_x^{(\gamma)}\)-supermartingale.
  By Lemma~\ref{Lem:harmonic}, we have,
  for \(\gamma_1,\gamma_2\in [-1,1]\)
  and \(x\in \cH^{(\gamma_1)}\),
  \begin{align}\label{eq:h/h-gamma-mart}
    \bP_x^{(\gamma_1)}
    \sbra*{\frac{h^{(\gamma_2)}(X_t)}{h^{(\gamma_1)}(X_t)}F_s}
     & =\frac{1}{h^{(\gamma_1)}(x)}\bP_x\sbra*{h^{(\gamma_2)}(X_t)F_s;T_0>t} \\
     & =\frac{1}{h^{(\gamma_1)}(x)}\bP_x\sbra*{h^{(\gamma_2)}(X_s)F_s;T_0>s} \\
     & =  \bP_x^{(\gamma_1)}
    \sbra*{\frac{h^{(\gamma_2)}(X_s)}{h^{(\gamma_1)}(X_s)}F_s},
  \end{align}
  which implies that \((\frac{h^{(\gamma_2)}(X_t)}{h^{(\gamma_1)}(X_t)}, t> 0)\) is
  a non-negative \(\bP_x^{(\gamma_1)}\)-martingale with mean
  \(\frac{h^{(\gamma_2)}(x)}{h^{(\gamma_1)}(x)}\).

  We next assume \(x=0\). Then we have
  \begin{align}
    \bP_0^{(\gamma)}\sbra*{\frac{1}{h^{(\gamma)}(X_t)}F_s}
    = n\sbra{F_s; T_0> t }
    \le n\sbra{F_s; T_0 > s}
    = \bP_0^{(\gamma)}\sbra*{\frac{1}{h^{(\gamma)}(X_s)}F_s},
  \end{align}
  which implies that \((\frac{1}{h^{(\gamma)}(X_t)}, t>0)\) is a
  non-negative
  \(\bP_0^{(\gamma)}\)-supermartingale.
  By Lemma~\ref{Lem:harmonic}, we have
  \begin{align}\label{eq:h/h-gamma-mart-n}
    \bP_0^{(\gamma_1)}
    \sbra*{\frac{h^{(\gamma_2)}(X_t)}{h^{(\gamma_1)}(X_t)}F_s}
     & =n\sbra*{h^{(\gamma_2)}(X_t)F_s;T_0>t}
    = n\sbra*{h^{(\gamma_2)}(X_s)F_s;T_0>s}
    =  \bP_0^{(\gamma_1)}
    \sbra*{\frac{h^{(\gamma_2)}(X_s)}{h^{(\gamma_1)}(X_s)}F_s},
  \end{align}
  which implies that \((\frac{h^{(\gamma_2)}(X_t)}{h^{(\gamma_1)}(X_t)}, t> 0)\) is
  a non-negative \(\bP_0^{(\gamma_1)}\)-martingale with mean \(1\).
\end{proof}

\begin{proof}[Proof of Theorem~\ref{Thm:p-gamma-abs-infty}]
  We first prove Theorem~\ref{Thm:p-gamma-abs-infty}
  in the case \(x\in \cH^{(\gamma)}\).
  Let \(F_s\) be a
  non-negative bounded \(\cF_s\)-measurable functional. Then,
  since \((\frac{1}{h^{(\gamma)}(X_t)},t\ge 0)\) is a non-negative
  \(\bP_x^{(\gamma)}\)-martingale
  by Lemma~\ref{Lem:P-g-mart}, we see
  \( \lim_{t\to\infty}\frac{1}{h^{(\gamma)}(X_t)}\)
  exists and its limit is non-negative \(\bP_x^{(\gamma)}\)-a.s.
  By Fatou's lemma, we obtain
  \begin{align}
    \bP_x^{(\gamma)}\sbra*{ \lim_{t\to\infty}\frac{1}{h^{(\gamma)}(X_t)}}
    \le \liminf_{t\to\infty}\bP_x^{(\gamma)}\sbra*{\frac{1}{h^{(\gamma)}(X_t)}}
    =\frac{1}{h^{(\gamma)}(x)}\liminf_{t\to\infty}
    \bP_x\rbra{T_0>t}
    = 0,
  \end{align}
  here the last equality follows from the fact that \((X,\bP_x)\) is recurrent.
  Hence it holds that
  \(\lim_{t\to\infty}\frac{1}{h^{(\gamma)}(X_t)} = 0\), \(\bP_x^{(\gamma)}\)-a.s.
  This implies \(\lim_{t\to\infty}\abs{X_t} = \infty\), \(\bP_x^{(\gamma)}\)-a.s.
  The proof in the case \(x=0\) is similar. Hence we omit it.
\end{proof}

\begin{proof}[Proof of Theorem~\ref{Thm:P-g-equi}]
  We first consider the case \(x\in \cH^{(\gamma)}\).
  Let \(F_t\) be
  a non-negative bounded \(\cF_t\)-measurable
  functional. Then we have
  \begin{align}
    \bP_x^{(\gamma)}\sbra{F_t}
    = \bP_x\sbra*{\frac{h^{(\gamma)}(X_t)}{h^{(\gamma)}(x)}F_t; T_0>t}
    = \frac{h(x)}{h^{(\gamma)}(x)}
    \bP_x^{(0)}\sbra*{\frac{h^{(\gamma)}(X_t)}{h(X_t)}F_t}.
  \end{align}
  Since \(h(x)\ge \abs{x}/m^2\) and
  \(h^{(\gamma)}(x) = h(x) + \gamma x / m^2\), it holds that
  \begin{align}
    1- \abs{\gamma}
    \le \frac{h^{(\gamma)}(X_t)}{h(X_t)}
    \le 1 + \abs{\gamma}.
  \end{align}
  Thus, we have
  \begin{align}
    (1-\abs{\gamma}) \frac{h(x)}{h^{(\gamma)}(x)} \bP_x^{(0)}\sbra{F_t}
    \le \bP_x^{(\gamma)}\sbra{F_t}
    \le (1+\abs{\gamma}) \frac{h(x)}{h^{(\gamma)}(x)}\bP_x^{(0)}\sbra{F_t}.
  \end{align}
  By the extension theorem, it holds that
  \begin{align}
    (1-\abs{\gamma}) \frac{h(x)}{h^{(\gamma)}(x)} \bP_x^{(0)}
    \le \bP_x^{(\gamma)}
    \le (1+\abs{\gamma}) \frac{h(x)}{h^{(\gamma)}(x)}\bP_x^{(0)},
    \quad \text{on \(\cF_\infty\).}
  \end{align}
  By the similar discussion, we also have
  \begin{align}
    (1-\abs{\gamma}) \bP_0^{(0)}
    \le \bP_0^{(\gamma)}
    \le (1+\abs{\gamma})\bP_0^{(0)},
    \quad \text{on \(\cF_\infty\).}
  \end{align}
  Therefore we obtain the desired result.
\end{proof}

\begin{proof}[Proof of Theorem~\ref{Thm:h-gamma-process-infty}]
  We first consider the case \(x\in \cH^{(\gamma)}\).
  Let \(\gamma_1,\gamma_2 \in [-1, 1]\) be
  different constants.
  Since \((\frac{h^{(\gamma_2)}(X_t)}{h^{(\gamma_1)}(X_t)}, t> 0)\) is
  a non-negative \(\bP_x^{(\gamma_1)}\)-martingale by Lemma~\ref{Lem:P-g-mart},
  the limit \(\lim_{t\to\infty}\frac{h^{(\gamma_2)}(X_t)}{h^{(\gamma_1)}(X_t)}\)
  exists and is finite
  \(\bP_0^{(\gamma_1)}\)-a.s.
  By
  (\ref{Lem-item:h/x-infty}) of Lemma~\ref{Lem:h},
  we see
  \begin{align}\label{eq:lim-hg'/hg}
    \lim_{x\to\pm\infty} \frac{h^{(\gamma_2)}(x)}{h^{(\gamma_1)}(x)}
    = \frac{1\pm \gamma_2}{1\pm \gamma_1}
    \in [0, \infty].
  \end{align}
  Hence the limits
  \(\lim_{x\to\infty} \frac{h^{(\gamma_2)}(x)}{h^{(\gamma_1)}(x)}\)
  and \(\lim_{x\to -\infty} \frac{h^{(\gamma_2)}(x)}{h^{(\gamma_1)}(x)}\)
  are different.
  Combining this and~\eqref{eq:Omega-union}, we have
  \(\bP^{(\gamma_1)}_x(\Omega^{+}_\infty\cup \Omega^{-}_\infty)=1\).
  Since \(\lim_{t\to\infty}\frac{h^{(\gamma_2)}(X_t)}{h^{(\gamma_1)}(X_t)}\)
  is finite \(\bP_x^{(\gamma_1)}\)-a.s.,
  and \( \lim_{x\to\pm\infty} \frac{h^{(\gamma_2)}(x)}{h^{(\mp 1)}(x)}
  = \infty\) for \(\gamma_2 \ne \mp 1\), we have
  \(\bP^{(1)}_x(\Omega^{+}_\infty)=\bP^{(-1)}_x(\Omega^{-}_\infty)=1\).
  Suppose \(-1<\gamma_1<1\).
  Then, since
  \(\frac{h^{(\gamma_2)}(X_t)}{h^{(\gamma_1)}(X_t)} \le \frac{1+\abs{\gamma_2}}{1-\abs{\gamma_1}}\),
  we may apply the dominated convergence theorem to obtain
  \begin{align}\label{eq:lim-inf-mart}
    \bP_x^{(\gamma_1)}\sbra*{\lim_{t\to\infty}\frac{h^{(\gamma_2)}(X_t)}{h^{(\gamma_1)}(X_t)}}
    = \lim_{t\to\infty} \bP_x^{(\gamma_1)}\sbra*{\frac{h^{(\gamma_2)}(X_t)}{h^{(\gamma_1)}(X_t)}}
    = \frac{h^{(\gamma_2)}(x)}{h^{(\gamma_1)}(x)}.
  \end{align}
  By~\eqref{eq:lim-hg'/hg} and~\eqref{eq:lim-inf-mart},
  we have
  \begin{align}\label{eq:p-g1-p-g2}
    \frac{1+\gamma_2}{1+\gamma_1}\bP_x^{(\gamma_1)}(\Omega^{+}_\infty)
    + \frac{1-\gamma_2}{1-\gamma_1}\bP_x^{(\gamma_1)}(\Omega^{-}_\infty)
    = \frac{h^{(\gamma_2)}(x)}{h^{(\gamma_1)}(x)}.
  \end{align}
  Since \(\bP_x^{(\gamma_1)}(\Omega^{+}_\infty)+
  \bP_x^{(\gamma_1)}(\Omega^{-}_\infty)=1\),~\eqref{eq:p-g1-p-g2} implies that
  \begin{align}
    \bP_x^{(\gamma_1)}(\Omega^{+}_\infty) = \frac{1+\gamma_1}{2}\frac{h^{(1)}(x)}{h^{(\gamma_1)}(x)}
    \quad\text{and}\quad
    \bP_x^{(\gamma_1)}(\Omega^{-}_\infty) = \frac{1-\gamma_1}{2}\frac{h^{(-1)}(x)}{h^{(\gamma_1)}(x)}.
  \end{align}
  The proof in the case \(x=0\) is similar. So we omit it.
\end{proof}

\section{The short-time behaviors}\label{Sec:short-time}

First, we offer the proof of Theorem~\ref{Thm:h^S/x}.

\begin{proof}[Proof of Theorem~\ref{Thm:h^S/x}]
  We write \(h^S(x)=h(x)+h(-x)\).
  Since we have
  \begin{align}
    \Re \varPsi(\lambda) = \frac{1}{2} \sigma^2 \lambda^2
    + \int_\bR \rbra*{1 - \cos \lambda x} \nu(\cd x)\ge 0,\label{eq:theta}
  \end{align}
  it holds that \(\Re(\frac{1}{\varPsi(\lambda)})\ge 0\).
  In addition, it holds that
  \(\lim_{x\to 0+}x^2 \varPsi\rbra{\lambda/x} =\sigma^2\lambda^2/2\).
  By (\ref{Lem-item:h^S-repre}) of Lemma~\ref{Lem:h}, we have
  \begin{align}
    \frac{h^S(x)}{x}
     & = \frac{2}{\pi x} \int_0^\infty
    \Re\rbra*{\frac{1-\cos\lambda x}{\varPsi(\lambda)}}\, \cd \lambda
    =  \frac{2}{\pi} \int_0^\infty
    \Re\rbra*{\frac{1-\cos\xi}{x^2\varPsi(\xi/x)}}\, \cd\xi.
  \end{align}
  We first assume \(\sigma^2=0\). By Fatou's lemma, we obtain
  \begin{align}
    \liminf_{x\to 0+}\frac{h^S(x)}{x}
     & = \liminf_{x\to 0+} \frac{2}{\pi} \int_0^\infty
    \Re\rbra*{\frac{1-\cos\xi}{x^2\varPsi(\xi/x)}}\, \cd\xi \\
     & \ge \frac{2}{\pi} \int_0^\infty\liminf_{x\to 0+}
    \Re\rbra*{\frac{1-\cos\xi}{x^2\varPsi(\xi/x)}}\, \cd\xi \\
     & =\infty,
  \end{align}
  which implies~\eqref{eq:h^S'}.

  We next assume \(\sigma^2>0\).
  Since \(\abs{x^2\varPsi(\xi/x)}\ge \abs{\Re (x^2\varPsi(\xi/x))}\ge \sigma^2\xi^2/2\),
  we have
  \begin{align}
    \abs*{\Re\rbra*{\frac{1-\cos\xi}{x^2\varPsi(\xi/x)}}}
    \le \abs*{\frac{1-\cos\xi}{x^2\varPsi(\xi/x)}}
    \le \frac{2(1\wedge\xi^2)}{ \sigma^2\xi^2},
  \end{align}
  which is integrable in \(\xi>0\).
  Hence we may apply the dominated convergence theorem to obtain
  \begin{align}
    \lim_{x\to 0+}\frac{h^S(x)}{x}
    =\frac{4}{\pi\sigma^2} \int_0^\infty
    \frac{1-\cos\xi}{\xi^2}\, \cd\xi=\frac{2}{\sigma^2},
  \end{align}
  which implies~\eqref{eq:h^S'}.
\end{proof}

\begin{proof}[Proof of Theorem~\ref{Thm:gaussian-entrance}]
  Let \(\gamma_1,\gamma_2 \in [-1, 1]\)
  be different constants.
  Then, since \(\rbra{\frac{h^{(\gamma_2)}(X_t)}{h^{(\gamma_1)}(X_t)}, t> 0}\)
  is a non-negative \(\bP_0^{(\gamma_1)}\)-martingale
  by Lemma~\ref{Lem:P-g-mart}, the limit
  \(\lim_{t\to 0+}\frac{h^{(\gamma_2)}(X_t)}{h^{(\gamma_1)}(X_t)}\) exists
  \(\bP_0^{(\gamma_1)}\)-a.s.\ for all \(t>0\).
  We have
  \begin{align}
    \lim_{x\to 0\pm}\frac{h^{(\gamma_2)}(x)}{h^{(\gamma_1)}(x)}
    = \frac{\abs{h^{(\gamma_2)\prime}(0\pm)}}{\abs{h^{(\gamma_1)\prime}(0\pm)}}
    =\frac{\abs{h'(0\pm)}\pm \gamma_2/m^2}{\abs{h'(0\pm)}\pm \gamma_1/m^2}
    \in [0,\infty].
  \end{align}
  Consequently, the limits
  \( \lim_{x\to 0+}\frac{h^{(\gamma_2)}(x)}{h^{(\gamma_1)}(x)}\)
  and \( \lim_{x\to 0-}\frac{h^{(\gamma_2)}(x)}{h^{(\gamma_1)}(x)}\)
  are different,
  which yields that \(\bP_0^{(\gamma)}(\Omega_0^{+,-})=0\).
  Since the \(\bP_0^{(\gamma_1)}\)-martingale
  \(\rbra{\frac{h^{(\gamma_2)}(X_t)}{h^{(\gamma_1)}(X_t)}, t>0}\)
  has mean \(1\),
  it holds that
  \begin{align}\label{eq:zero-mean1}
    \frac{{h^{(\gamma_2)\prime}}(0+)}{{h^{(\gamma_1)\prime}}(0+)}
    \bP_0^{(\gamma_1)}(\Omega_0^+)
    + \frac{\abs{h^{(\gamma_2)\prime}(0-)}}{\abs{h^{(\gamma_1)\prime}(0-)}}
    \bP_0^{(\gamma_1)}(\Omega_0^-)=1.
  \end{align}
  Since \(\bP_0^{(\gamma_1)}(\Omega_0^+)+ \bP_0^{(\gamma_1)}(\Omega_0^-)=1\)
  and by Theorem~\ref{Thm:h^S/x}, we obtain
  \begin{align}
    \bP_0^{(\gamma_1)}(\Omega_0^+)
    = \frac{\sigma^2}{2}{h^{(\gamma_1)\prime}}(0+)
    \quad\text{and}\quad
    \bP_0^{(\gamma_1)}(\Omega_0^-)=\frac{\sigma^2}{2}\abs{{h^{(\gamma_1)\prime}}(0-)}.
  \end{align}
  Hence the proof is complete.
\end{proof}

\begin{proof}[Proof of Theorem~\ref{Thm:entrance-one}]
  Assume \(h'(0+)=\infty\) and \(\abs{h'(0-)}<\infty\).
  Let \(\gamma_1,\gamma_2 \in [-1, 1]\)
  be different constants.
  By the same discussion as the proof of Theorem~\ref{Thm:gaussian-entrance},
  we obtain~\eqref{eq:zero-mean1}. By the assumption, we also have
  \(\frac{h^{(\gamma_2)\prime}(0+)}{h^{(\gamma_1)\prime}(0+)}=1\)
  for all \(-1\le\gamma_2\le 1\). Thus we have
  \begin{align}
    \bP_0^{(\gamma_1)}(\Omega_0^+)
    = 1
    \quad\text{and}\quad
    \bP_0^{(\gamma_1)}(\Omega_0^-)=0.
  \end{align}
  The proof in the case \(h'(0+)<\infty\) and \(\abs{h'(0-)}=\infty\)
  is similar. So we omit it.
\end{proof}

\begin{proof}[Proof of Theorem~\ref{Thm:feller}]
  Ikeda--Watanabe~\cite[Theorem 3.3]{MR0451425} proved that
  \begin{align}
    \bP_x\rbra{\Omega_1^{+,-}| T_0<\infty} = 1,
    \quad x\in \bR\setminus\cbra{0},
  \end{align}
  where
  \begin{align}
    \Omega_1^{+,-}\coloneqq
    \cbra*{\text{\(\exists \cbra{t_n}\)
    with \(t_n\to {T_0-}\) such that \(\forall n,\, X_{t_n}X_{t_{n+1}}<0\)}}.
  \end{align}
  This implies that
  \begin{align}
    n((\Omega_1^{+,-})^c\cap \cbra{T_0<\infty})=0.
  \end{align}
  By time reversal property of excursion paths (see~\cite[Lemma 5.2]{MR2397787}),
  it holds that
  \begin{align}
    n((\Omega_0^{+,-})^c\cap \cbra{T_0<\infty})=0.
  \end{align}
  Since \((X,\bP)\) is recurrent,
  it holds that
  \(n(\cbra{T_0<\infty}^c)=0\). Thus we have \(n((\Omega_0^{+,-})^c)=0\),
  which implies
  \( \bP_0^{(\gamma)}(\Omega_0^{+,-})=1\).
\end{proof}
\section{Appendix A: Resolvent density under \(\bP_x^{(\gamma)}\)}\label{Sec:resolvent}
We calculate the resolvent density under \(\bP_x^{(\gamma)}\)
and show some Feller property.
Recall that we always assume the assumption~\ref{item:assumption}.

Let \(p_t(\cd x)\) denote the transition law of \(X_t\) under \(\bP\)
and let \(p_t^0(x, \cd y)\) denote
the transition law of \(X_t\) under \(\bP_x^0\).
By the Markov property,
we have, for \(x, y\in \bR\setminus\cbra{0}\),
\begin{align}\label{eq:p_t^0}
  p_t^0(x, \cd y)= p_t(\cd y-x) -\int_{[0,t]} \bP_x(T_0\in \cd s)p_{t-s}(\cd y).
\end{align}
For \(t,q>0\) and \(x,y \in \bR\setminus\cbra{0}\),
we denote the \(q\)-resolvent for killed process by
\begin{align}
  r_q^0(x,y) & = \int_0^\infty \ce^{-qt}p_t^0(x,\cd y)\,\cd t/\cd y        \\
             & = r_q(y-x)-\frac{r_q(-x)r_q(y)}{r_q(0)}                     \\
             & = h_q(x)+h_q(-y) - h_q(x-y) - \frac{h_q(x)h_q(-y)}{r_q(0)}.
\end{align}
Note that the second identity follows from~\eqref{eq:p_t^0}
and~\eqref{eq:-qT_0}.
This implies that the killed process
\((X,\bP_x^0)\) has the continuous \(q\)-resolvent density.

Let \(-1\le \gamma \le 1\).
For \(x\in \cH^{(\gamma)}_0\) and
\(y\in \cH^{(\gamma)}\), we denote the transition law of \(X_t\) under
\(\bP^{(\gamma)}_x\)
by
\begin{align}
  p_t^{(\gamma)}(x,\cd y) =
  \begin{dcases}
    \frac{h^{(\gamma)}(y)}{h^{(\gamma)}(x)}
    p_t^0(x,\cd y)                 & x\in\cH^{(\gamma)}, \\
    h^{(\gamma)}(y) n(X_t\in\cd y) & x = 0.
  \end{dcases}
\end{align}
Then the \(q\)-resolvent density
\(r_q^{(\gamma)}(x,y)\) of \((X,\bP_x^{(\gamma)})\) can be expressed as follows:
if \(x\in \cH^{(\gamma)}\),
\begin{align}
  r_q^{(\gamma)}(x,y)
   & = \int_0^\infty \ce^{-qt} p_t^{(\gamma)}(x,\cd y) \, \cd t /\cd y \\
   & = \frac{h^{(\gamma)}(y)}{h^{(\gamma)}(x)}
  \rbra*{h_q(x)+h_q(-y)-h_q(x-y)-\frac{h_q(x)h_q(-y)}{r_q(0)}},
\end{align}
and
\begin{align}
  r_q^{(\gamma)}(0, y)
   & = \int_0^\infty \ce^{-qt} p_t^{(\gamma)}(0,\cd y) \, \cd t /\cd y
  = \frac{h^{(\gamma)}(y)r_q(y)}{r_q(0)}
  =  h^{(\gamma)}(y)\rbra*{1-\frac{h_q(-y)}{r_q(0)}},
\end{align}
where the second identity follows from the formula:
for any non-negative measurable function \(f\), it holds that
\begin{align}\label{eq:formula-duality}
  \int_0^\infty \ce^{-qt} n\sbra{f(X_t)}\, \cd t
  = \int_\bR f(x) \widehat{\bP}_x\sbra{\ce^{-qT_0}}\, \cd x,
\end{align}
where \(\bP_x\) and \(\widehat{\bP}_x\) are in weak duality, i.e.,
the probability measure \(\widehat{\bP}_x\) denotes the law of \((-X_t, t\ge 0)\)
under \(\bP_{-x}\).
For more details, see Chen--Fukushima--Ying~\cite{MR2397787}
and Fitzsimmons--Getoor~\cite{MR2247835}.
See also Yano--Yano--Yor~\cite[Theorem 3.3]{MR2599211}.

Summarizing the above computations, we obtain the following results:
\begin{Prop}\label{Prop:gamma-q-resolvent}
  Let \(q>0\) and \(-1\le\gamma\le 1\). Let \(r_q^{(\gamma)}(x,y)\) denote the \(q\)-resolvent
  density of \((X,\bP_x^{(\gamma)})\). Then,
  for \(y\in \cH^{(\gamma)}\), it holds that
  \begin{align}\label{eq:r_q^g}
    r_q^{(\gamma)}(x,y)
    =\begin{dcases}
      \frac{h^{(\gamma)}(y)}{h^{(\gamma)}(x)}
      \rbra*{h_q(x)+h_q(-y)-h_q(x-y)-\frac{h_q(x)h_q(-y)}{r_q(0)}}
       & \text{if \(x\in \cH^{(\gamma)}\)}, \\
      h^{(\gamma)}(y)\rbra*{1-\frac{h_q(-y)}{r_q(0)}}
       & \text{if \(x=0\).}
    \end{dcases}
  \end{align}
\end{Prop}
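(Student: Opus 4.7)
The plan is to derive the formula directly from the definition of the $h^{(\gamma)}$-transform, reducing everything to the $q$-resolvent density of the killed process and then rewriting the expressions in terms of $h_q$.

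First I would handle the killed-process case. Using the strong Markov property at $T_0$, the transition law decomposes as
\begin{align}
  p_t^0(x,\cd y) = p_t(\cd y-x) - \int_{[0,t]} \bP_x(T_0 \in \cd s)\,p_{t-s}(\cd y).
\end{align}
Taking the Laplace transform in $t$ and using $\bP_x[\ce^{-qT_0}] = r_q(-x)/r_q(0)$ yields
\begin{align}
  r_q^0(x,y) = r_q(y-x) - \frac{r_q(-x)\,r_q(y)}{r_q(0)}.
\end{align}
Substituting $r_q(z) = r_q(0) - h_q(-z)$ and simplifying gives the identity
\begin{align}
  r_q^0(x,y) = h_q(x) + h_q(-y) - h_q(x-y) - \frac{h_q(x)\,h_q(-y)}{r_q(0)},
\end{align}
which is a routine algebraic manipulation.

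Next, for $x \in \cH^{(\gamma)}$, I would use the very definition~\eqref{eq:def-h-trans} of the $h^{(\gamma)}$-transform: $p_t^{(\gamma)}(x,\cd y) = \frac{h^{(\gamma)}(y)}{h^{(\gamma)}(x)} p_t^0(x,\cd y)$. Integrating against $\ce^{-qt}\,\cd t$ and pulling the factor $h^{(\gamma)}(y)/h^{(\gamma)}(x)$ outside immediately yields the first line of~\eqref{eq:r_q^g}.

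For $x=0$, I would apply~\eqref{eq:def-h-trans} under the excursion measure: $p_t^{(\gamma)}(0,\cd y) = h^{(\gamma)}(y)\, n(X_t \in \cd y)$. The duality formula~\eqref{eq:formula-duality} with $f(z) = \ce^{-qt}\,\mathbf{1}_{\{z=y\}}$ (made rigorous by approximation against test functions) gives
\begin{align}
  \int_0^\infty \ce^{-qt}\,n(X_t \in \cd y)\,\cd t = \widehat{\bP}_y[\ce^{-qT_0}]\,\cd y = \frac{r_q(y)}{r_q(0)}\,\cd y,
\end{align}
where the last step uses~\eqref{eq:-qT_0} applied to the dual process $(-X, \bP_{-x})$ so that $\widehat{\bP}_y[\ce^{-qT_0}] = r_q(y)/r_q(0)$. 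Multiplying by $h^{(\gamma)}(y)$ and rewriting $r_q(y) = r_q(0) - h_q(-y)$ yields the second line of~\eqref{eq:r_q^g}.

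The computation is essentially bookkeeping; the main subtlety I anticipate is justifying the use of the duality formula~\eqref{eq:formula-duality} in the $x=0$ case, since one needs to identify the dual of $\bP_x$ under which $T_0$ has the stated Laplace transform, and to interpret the formal substitution into an equality of densities (as opposed to measures). Once that identification is made, the rest is direct substitution using the definition $h_q(x) = r_q(0) - r_q(-x)$.
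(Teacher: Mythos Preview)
Your proposal is correct and follows essentially the same route as the paper: decompose $p_t^0$ via the strong Markov property at $T_0$, take Laplace transforms to get $r_q^0(x,y)=r_q(y-x)-r_q(-x)r_q(y)/r_q(0)$, rewrite in terms of $h_q$, then multiply by the $h^{(\gamma)}$-ratio for $x\in\cH^{(\gamma)}$ and use the duality formula~\eqref{eq:formula-duality} for $x=0$. The only point you flag as subtle---identifying $\widehat{\bP}_y[\ce^{-qT_0}]=r_q(y)/r_q(0)$ and passing from measures to densities---is handled in the paper exactly as you outline.
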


Letting \(q\to 0+\) in~\eqref{eq:r_q^g},
we obtain the zero resolvent
\begin{align}
  r_0^{(\gamma)}(x,y)\coloneqq \lim_{q\to 0+} r_q^{(\gamma)}(x,y).
\end{align}
Note that it holds that \(\lim_{q\to 0+}\frac{1}{r_q(0)}=0\)
if \((X,\bP)\) is recurrent;
see, e.g.,~\cite[Theorem I.17]{MR1406564}
and~\cite[Theorem 37.5]{MR1739520}.
(Recall that we always assume \((X,\bP)\) is recurrent.)
\begin{Cor}
  For \(y\in \cH^{(\gamma)}\), it holds that
  \begin{align}
    r_0^{(\gamma)}(x,y)
    =\begin{dcases}
      \frac{h^{(\gamma)}(y)}{h^{(\gamma)}(x)}
      \rbra*{h(x)+h(-y)-h(x-y)}
                      & \text{if \(x\in \cH^{(\gamma)}\),} \\
      h^{(\gamma)}(y) & \text{if \(x=0\).}
    \end{dcases}
  \end{align}
\end{Cor}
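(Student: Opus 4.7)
The plan is elementary: directly pass to the limit $q\to 0+$ in the two formulas provided by Proposition~\ref{Prop:gamma-q-resolvent}. Since the corollary defines
\[
  r_0^{(\gamma)}(x,y) \coloneqq \lim_{q\to 0+} r_q^{(\gamma)}(x,y),
\]
the task reduces to computing the pointwise limit of each factor appearing on the right-hand side of~\eqref{eq:r_q^g}.

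Two ingredients suffice. First, by part (\ref{Lem-item:h-exist}) of Lemma~\ref{Lem:h}, for every $z\in\bR$ one has $\lim_{q\to 0+} h_q(z) = h(z)$, and the limit is finite. Second, by recurrence of $(X,\bP)$, the zero resolvent of the original process is infinite in the sense that $\lim_{q\to 0+} r_q(0) = \infty$, equivalently
\[
  \lim_{q\to 0+}\frac{1}{r_q(0)} = 0,
\]
as cited from Bertoin~\cite[Theorem I.17]{MR1406564} and Sato~\cite[Theorem 37.5]{MR1739520}.

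Granted these, the case $x\in\cH^{(\gamma)}$ is handled by taking the limit of the four terms inside the parentheses in~\eqref{eq:r_q^g}: the first three converge to $h(x)$, $h(-y)$, $h(x-y)$ respectively, while the remainder
\[
  \frac{h_q(x)\, h_q(-y)}{r_q(0)}
  \;\xrightarrow[q\to 0+]{}\; h(x)\cdot h(-y)\cdot 0 = 0,
\]
using that the two numerators tend to finite limits and the denominator blows up. Multiplying by the $q$-independent prefactor $h^{(\gamma)}(y)/h^{(\gamma)}(x)$ yields the claimed formula. Similarly, for $x=0$ one has $h_q(-y)/r_q(0)\to 0$, so the bracket tends to $1$ and $r_0^{(\gamma)}(0,y) = h^{(\gamma)}(y)$.

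There is no real obstacle: all the substantive work sits in Proposition~\ref{Prop:gamma-q-resolvent} (which derives the $q$-resolvent formula via the killed-process Markov property and the duality identity~\eqref{eq:formula-duality}) and in Lemma~\ref{Lem:h} (existence of $h$). The corollary itself is just the verification that these two pieces combine cleanly in the limit $q\to 0+$; the only point worth flagging explicitly is why $h_q(x)h_q(-y)/r_q(0)\to 0$, which follows from recurrence of $(X,\bP)$.
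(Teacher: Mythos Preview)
Your proposal is correct and matches the paper's own argument exactly: the paper simply says ``letting $q\to 0+$ in~\eqref{eq:r_q^g}'' and notes that $\lim_{q\to 0+}\frac{1}{r_q(0)}=0$ by recurrence (citing the same references), with $h_q\to h$ coming from Lemma~\ref{Lem:h}. There is nothing to add.
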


Set
\begin{align}
  T_t^{(\gamma)}f(x)= \bP_x^{(\gamma)}\sbra{f(X_t)},
  \quad t\ge 0,\; f\in \cB_{+, b}(\cH_0^{(\gamma)}),
\end{align}
where \(\cB_{+, b}(\cH_0^{(\gamma)})\) denotes
the set of non-negative bounded measurable functions.
Then the family \(\cbra{T_t^{(\gamma)},t\ge 0}\)
forms a transition semigroup.
We define the resolvent operator of the semigroup \(T_t^{(\gamma)}\) as
\begin{align}
  R_q^{(\gamma)}f(x) = \int_0^\infty \ce^{-qt}T_t^{(\gamma)}f(x)\, \cd t,\quad
  q>0,\; f\in \cB_{+, b}(\cH_0^{(\gamma)}).
\end{align}
For Theorem~\ref{Thm:fellerp}, we are inspired by
Yano~\cite[Theorem 1.5]{MR2603019}.
\begin{Thm}\label{Thm:fellerp}
  Assume the conditions (\ref{item:cond:h-gamma-infty})--(\ref{item:cond:h-/h})
  of Theorem~\ref{Thm:feller} hold.
  (Note that~(\ref{item:cond:h-gamma-infty}) of Theorem~\ref{Thm:feller}
  and subadditivity of \(h^{(\gamma)}\)
  imply that \(\cH_0^{(\gamma)}=\bR\).)
  Then the semigroup \(\rbra{T_t^{(\gamma)}}_{t\ge 0}\) enjoys Feller property, i.e.,
  \begin{enumerate}[label=\textbf{(F\arabic*)},series=feller]
    \item \(T^{(\gamma)}_tC_0(\bR)\subset C_0(\bR)\);\label{item:feller1}
    \item \(\norm{T_t^{(\gamma)}f - f}\to 0\) as \(t\to 0+\) for
          all \(f\in C_0(\bR)\),\label{item:feller2}
  \end{enumerate}
\end{Thm}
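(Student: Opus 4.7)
Plan: I would establish the Feller property in two stages: first verify that the resolvent $R_q^{(\gamma)}$ maps $C_0(\bR)$ into itself for every $q>0$, then upgrade this to (F1) and (F2) via the standard Hille--Yosida machinery.

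For the first stage, fix $q>0$ and $f\in C_0(\bR)$, and exploit the explicit formula of Proposition~\ref{Prop:gamma-q-resolvent}. \emph{Continuity on $\bR\setminus\cbra{0}$} is immediate from the continuity and positivity of $h^{(\gamma)}$ off zero and the continuity of $h_q$, combined with honesty of the semigroup ($\int r_q^{(\gamma)}(x,y)\,\cd y = 1/q$ for every $x$) and Scheff\'{e}'s lemma. \emph{Continuity at $x=0$} is the crux. Rewriting
\begin{equation*}
r_q^0(x,y) = h_q(x)\rbra*{1-\frac{h_q(-y)}{r_q(0)}} + \bigl(h_q(-y) - h_q(x-y)\bigr),
\end{equation*}
so that $r_q^{(\gamma)}(x,y) = h^{(\gamma)}(y)\,r_q^0(x,y)/h^{(\gamma)}(x)$, I would pass to the limit $x\to 0$ using: condition~(\ref{item:cond:h/x-infty}), giving $h^{(\gamma)}(x)/h(x)\to 1$; condition~(\ref{item:cond:h_q/h}) at argument $0$, giving $h_q(x)/h(x)\to 1$; condition~(\ref{item:cond:h_q/h}) at argument $-y$ for fixed $y\ne 0$, giving $(h_q(x-y)-h_q(-y))/h(x)\to 0$; and condition~(\ref{item:cond:h-/h}), controlling the asymmetry of $h$ near $0$. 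Together these yield the pointwise convergence $r_q^{(\gamma)}(x,y)\to r_q^{(\gamma)}(0,y)$ for every $y\ne 0$, and Scheff\'{e}'s lemma upgrades this to $L^1$-convergence of densities, hence continuity of $R_q^{(\gamma)}f$ at $0$. \emph{Vanishing at infinity} follows by taking $f\in C_c(\bR)$ with support in a compact set $K$; assumption~\ref{item:assumption} combined with Riemann--Lebesgue forces $r_q(z)\to 0$ as $\abs{z}\to\infty$, hence $r_q^0(x,y)\to 0$ uniformly in $y\in K$ as $\abs{x}\to\infty$; condition~(\ref{item:cond:h-gamma-infty}) then bounds $1/h^{(\gamma)}(x)$ away from infinity, giving $R_q^{(\gamma)}f(x)\to 0$, and the general case follows by density.

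For the second stage, since $(R_q^{(\gamma)})_{q>0}$ is a positive contraction resolvent mapping $C_0(\bR)$ into itself, the Hille--Yosida theorem identifies it as the resolvent of a Feller semigroup on $C_0(\bR)$, which must coincide with $T_t^{(\gamma)}$ by uniqueness of Laplace transforms; this yields~\ref{item:feller1}. For~\ref{item:feller2}, I would use the identity $q R_q^{(\gamma)}f(x) = \bP_x^{(\gamma)}\sbra{f(X_{\bm{e}_q})}$ together with right-continuity of the paths under $\bP_x^{(\gamma)}$ to obtain $q R_q^{(\gamma)}f(x)\to f(x)$ pointwise as $q\to\infty$ for every $f\in C_0(\bR)$, and then conclude uniform convergence on $\bR$ by the standard Yosida approximation argument.

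The principal obstacle I expect is the continuity at $x=0$ in stage one. The vanishing $h^{(\gamma)}(0)=0$ creates a $0/0$ limit whose resolution requires all four hypotheses~(\ref{item:cond:h-gamma-infty})--(\ref{item:cond:h-/h}); exploiting honesty of the conditioned process via Scheff\'{e}'s lemma is the key device that circumvents having to construct a delicate uniform dominating function.
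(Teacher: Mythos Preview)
Your proposal is correct and follows the same overall strategy as the paper: reduce the Feller property to showing \(R_q^{(\gamma)}C_0(\bR)\subset C_0(\bR)\) together with pointwise convergence \(T_t^{(\gamma)}f\to f\), then invoke standard semigroup theory (the paper cites \cite[Proposition~III.2.4]{RevuzYor}, which packages exactly your Hille--Yosida/Yosida-approximation step). The differences are purely in the stage-one details. For continuity at \(x=0\) you use Scheff\'{e}'s lemma via honesty of the conditioned semigroup, whereas the paper uses dominated convergence with the subadditivity bound \(r_q^{(\gamma)}(x,y)\le h^{(\gamma)}(y)\,\frac{h_q(x)+h_q(-x)}{h^{(\gamma)}(x)}\); the boundedness of this majorant near \(x=0\) is precisely where condition~(\ref{item:cond:h-/h}) is consumed. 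Your Scheff\'{e} route is slicker and in fact makes~(\ref{item:cond:h-/h}) superfluous at this step, since the pointwise convergence \(r_q^{(\gamma)}(x,y)\to r_q^{(\gamma)}(0,y)\) needs only~(\ref{item:cond:h/x-infty}) and~(\ref{item:cond:h_q/h}) (you invoke~(\ref{item:cond:h-/h}) but do not actually use it). For vanishing at infinity you appeal to Riemann--Lebesgue (\(r_q(z)\to 0\) as \(\abs{z}\to\infty\)) plus condition~(\ref{item:cond:h-gamma-infty}); the paper instead bounds \(R_q^{(\gamma)}f(x)\) by \(\frac{\sup_A h^{(\gamma)}\cdot\norm{f}}{q\,h^{(\gamma)}(x)}\,\bP_x\sbra{\ce^{-qT_A}}\) and again uses~(\ref{item:cond:h-gamma-infty}). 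Both arguments are valid.
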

where \(C_0(\bR)\) stands for the class of
continuous functions vanishing at infinity.
\begin{proof}
  Note that the condition~(\ref{item:cond:h/x-infty}) implies that
  \begin{align}
    \lim_{x\to 0}\frac{h^{(\gamma)}(x)}{\abs{x}}= \infty
    \quad\text{and}\quad
    \lim_{x\to 0}\frac{h^{(\gamma)}(x)}{h(x)}= 1,
  \end{align}
  for \(-1\le\gamma \le 1\).
  To show Feller property,
  it is sufficient to show that
  \begin{enumerate}[resume*=feller]
    \item \(T_t^{(\gamma)}f(x)\to f(x)\) as \(t\to 0+\) for all
          \(x\in\bR,\; f\in C_0(\bR)\),\label{item:feller3}
    \item \(R_q^{(\gamma)}C_0(\bR)\subset C_0(\bR)\).\label{item:feller4}
  \end{enumerate}
  For more details see~\cite[Proposition III.2.4]{RevuzYor}.
  Since \(T_t^{(\gamma)}f(x)=\bP_x^{(\gamma)}\sbra{f(X_t)}\)
  and since \(\bP_x^{(\gamma)}\) is a probability measure on the
  c\`{a}dl\`{a}g space,~\ref{item:feller3} is obvious.
  We proceed the proof of~\ref{item:feller4}.
  Let \(C_c(\bR)\) stand for the set of continuous functions with compact support
  on \(\bR\).
  Since \(\norm{qR_q^{(\gamma)}f}\le
  \norm{f}\)
  and since the closure of \({C_c(\bR)}\) is \(C_0(\bR) \),
  it is sufficient to show that \(R_q^{(\gamma)}C_c(\bR)\subset C_0(\bR)\).
  Recall that, for \(x\in\bR\),
  \begin{align}
    R_q^{(\gamma)}f(x)
    = \int_{\cH^{(\gamma)}} f(y)r_q^{(\gamma)}(x,y)\, \cd y,
    \quad f\in C_c(\bR).
  \end{align}
  Since \(f\) has compact support and is
  continuous, and \(r_q^{(\gamma)}\) is continuous in
  \((x,y)\in \cH^{(\gamma)}\times \cH^{(\gamma)}\),
  the function \(R_q^{(\gamma)}f(x)\) is continuous
  in \(x\in\cH^{(\gamma)}\).

  Let the set \(A\subset\bR\) stand for the support of \(f\).
  Since \(r_q^{(\gamma)}(x,y)=\frac{h^{(\gamma)}(y)}{h^{(\gamma)}(x)}r_q^0(x,y)\),
  it holds that
  \begin{align}
    R_q^{(\gamma)}f(x)
     & = \frac{1}{h^{(\gamma)}(x)}\int_{\cH^{(\gamma)}} f(y)h^{(\gamma)}(y) r_q^0(x,y)
    \, \cd y                                                                           \\
     & \le \sup_{y\in A} h^{(\gamma)}(y) \frac{1}{q
    h^{(\gamma)}(x)} \int_{\cH^{(\gamma)}}  f(y) r_q^0(x,y)\, \cd y                    \\
     & \le \sup_{y\in A} h^{(\gamma)}(y) \frac{\norm{f}}{q
    h^{(\gamma)}(x)}\bP_x\sbra{\ce^{-qT_A}}                                            \\
     & \to 0 \quad \text{as \(x\to\pm\infty\).}
  \end{align}
  Here we used the assumption~(\ref{item:cond:h-gamma-infty}).
  Hence \(R_q^{(\gamma)}f(x)\) vanishes at infinity.

  We have to prove \(R_q^{(\gamma)}f(x)\) is continuous at \(x=0\).
  By Proposition~\ref{Prop:gamma-q-resolvent}, and
  assumptions~(\ref{item:cond:h/x-infty})
  and~(\ref{item:cond:h_q/h}),
  we have \(r_q(x,y)\to r_q(0,y)\) as \(x\to 0\).
  Moreover, since \(h_q\) is subadditive,
  it holds that, for \(x,y\in \cH^{(\gamma)}\),
  \begin{align}
    r_q^{(\gamma)}(x,y)\le h^{(\gamma)}(y)
    \frac{h_q(x)+h_q(-x)}{h^{(\gamma)}(x)}.
    \label{eq:r_q-bdd}
  \end{align}
  The conditions~(\ref{item:cond:h_q/h}) and~(\ref{item:cond:h-/h}) implies that
  the right hand side of~\eqref{eq:r_q-bdd} is bounded near \(x=0\)
  and \(y\in A\).
  Thus we may apply the dominated convergence theorem to deduce that
  \(R_q^{(\gamma)}f(x)\) is also continuous at \(x=0\).
  Hence \((T_t^{(\gamma)})\) has Feller property.
\end{proof}
\section{Appendix B: Proof of Theorem~\ref{Thm:limit-P0}}\label{Sec:pf-meander}
Recall that we always assume the assumption~\ref{item:assumption}.
\begin{Lem}\label{Lem:integ-h-}
  For any \(t>0\), it holds that \(n\sbra{h(-X_t)}<\infty\).
\end{Lem}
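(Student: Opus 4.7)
The plan is to reduce to the symmetrized quantity $h^{S}(x):=h(x)+h(-x)$. Since $n\sbra{h(X_t)}=1$ by Lemma~\ref{Lem:harmonic}, one has $n\sbra{h(-X_t)}=n\sbra{h^{S}(X_t);T_0>t}-1$, so it suffices to prove $n\sbra{h^{S}(X_t);T_0>t}<\infty$. The decisive advantage of working with $h^{S}$ is the non-negative Fourier representation from Lemma~\ref{Lem:h}(\ref{Lem-item:h^S-repre}),
\begin{align*}
  h^{S}(x)=\frac{2}{\pi}\int_0^\infty\Re\rbra*{\frac{1}{\varPsi(\lambda)}}(1-\cos\lambda x)\,\cd\lambda,
\end{align*}
which, unlike item~(iv) of the same lemma, requires no moment assumption on $X_1$. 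Tonelli yields
\begin{align*}
  n\sbra{h^{S}(X_t);T_0>t}=\frac{2}{\pi}\int_0^\infty\Re\rbra*{\frac{1}{\varPsi(\lambda)}}\,n\sbra{1-\cos\lambda X_t;T_0>t}\,\cd\lambda.
\end{align*}

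For the inner $n$-integral I would take the Laplace transform in $t$ and apply the excursion master formula
\(\int_0^\infty\ce^{-qt}n\sbra{f(X_t);T_0>t}\,\cd t=r_q(0)^{-1}\int f(y)r_q(y)\,\cd y\), a standard consequence of decomposing the resolvent $\bP_0\sbra{\int_0^\infty\ce^{-qt}f(X_t)\,\cd t}$ along excursions away from $0$ with the normalization~\eqref{eq:n-normalize}. Specializing to $f(y)=1-\cos\lambda y$ and using $\int\ce^{\ci\lambda y}r_q(y)\,\cd y=1/(q+\varPsi(\lambda))$,
\begin{align*}
  \int_0^\infty\ce^{-qt}n\sbra{1-\cos\lambda X_t;T_0>t}\,\cd t
  =\frac{1}{r_q(0)}\sbra*{\frac{1}{q}-\Re\rbra*{\frac{1}{q+\varPsi(\lambda)}}}
  =\frac{q\Re\varPsi(\lambda)+\abs{\varPsi(\lambda)}^2}{q\,r_q(0)\,\abs{q+\varPsi(\lambda)}^2}.
\end{align*}
Multiplying by $\Re(1/\varPsi(\lambda))=\Re\varPsi/\abs{\varPsi}^2$, the $\lambda$-integrand splits as $(\Re\varPsi)^2/(\abs{\varPsi}^2\abs{q+\varPsi}^2)+\Re\varPsi/(q\abs{q+\varPsi}^2)$, bounded respectively by $1/\abs{q+\varPsi}^2$ and $1/(q\abs{q+\varPsi})$. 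Condition~\ref{item:assumption} gives $\int 1/\abs{q+\varPsi}\,\cd\lambda<\infty$ and hence $\int 1/\abs{q+\varPsi}^2\,\cd\lambda\le q^{-1}\int 1/\abs{q+\varPsi}\,\cd\lambda<\infty$, so $\int_0^\infty\ce^{-qt}n\sbra{h^{S}(X_t);T_0>t}\,\cd t<\infty$ for every $q>0$ and $n\sbra{h^{S}(X_t);T_0>t}<\infty$ for Lebesgue-a.e.\ $t>0$.

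To upgrade to every $t>0$, fix $t>0$ and pick $s_0\in(0,t)$ at which the above finiteness holds (the set of such $s_0$ is dense). The Markov property of $n$ at $s_0$ gives
\begin{align*}
  n\sbra{h^{S}(X_t);T_0>t}=n\sbra{\bP^{0}_{X_{s_0}}\sbra{h^{S}(X_{t-s_0})};T_0>s_0},
\end{align*}
and subadditivity of $h^{S}$ (inherited from that of $h$) yields $\bP_y\sbra{h^{S}(X_u)}\le h^{S}(y)+\bP_0\sbra{h^{S}(X_u)}$. Repeating the Fourier/Tonelli computation with the elementary bound $\abs{1-\ce^{-u\varPsi(\lambda)}}\le\min(u\abs{\varPsi(\lambda)},2)$ reduces $\bP_0\sbra{h^{S}(X_u)}$ to $\int\min(u,2/\abs{\varPsi(\lambda)})\,\cd\lambda$, which is finite again by condition~\ref{item:assumption}. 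Combining with $n(T_0>s_0)<\infty$ closes the argument and gives $n\sbra{h^{S}(X_t);T_0>t}<\infty$ for every $t>0$.

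The hardest step is the integrability verification in the two Fourier-Tonelli calculations (for $n\sbra{h^{S}(X_t);T_0>t}$ and for $\bP_0\sbra{h^{S}(X_u)}$) uniformly in the moment behavior of $X_1$; Lemma~\ref{Lem:h}(\ref{Lem-item:h^S-repre}) is indispensable because it supplies a representation of $h^{S}$ valid simultaneously in the $m^2<\infty$ and $m^2=\infty$ regimes, whereas working directly with $h(-\cdot)$ would require the representation from item~(iv) and thus $m^2<\infty$.
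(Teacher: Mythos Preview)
Your proof is correct and follows the same overall architecture as the paper's: first show that the Laplace transform in $t$ of the quantity of interest is finite, deduce finiteness for Lebesgue-a.e.\ $t$, then upgrade to all $t>0$ via the Markov property of $n$ combined with subadditivity and finiteness of $\bP_0[\,\cdot\,(X_u)]$. The paper carries this out for $\hat h(x)=h(-x)$ directly, writing the Laplace transform as $\int_\bR \hat h(x)\,r_q(x)/r_q(0)\,\cd x$ via the duality formula~\eqref{eq:formula-duality} and then invoking Pant\'{\i}~\cite[(3.20)]{MR3689384} for its finiteness, and similarly quoting Tsukada~\cite{MR3838874} for $\bP_0[\hat h(X_u)]<\infty$ in the upgrade step. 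Your route instead passes to the symmetrized function $h^S$ and exploits its non-negative Fourier representation (Lemma~\ref{Lem:h}(\ref{Lem-item:h^S-repre})) to compute and bound everything explicitly from condition~\ref{item:assumption} (and its consequence~\eqref{eq:inte-of-varPsi}), without appealing to outside estimates. The payoff of your approach is self-containment and transparency of the constants; the payoff of the paper's approach is brevity, since the needed bounds are already in the literature. Both are perfectly sound, and in fact your Fourier estimate $\int_0^\infty\min(u,2/\abs{\varPsi(\lambda)})\,\cd\lambda<\infty$ follows cleanly from~\eqref{eq:inte-of-varPsi} (split at $\lambda=1$) or, equivalently, from $\min(u,2/\abs{\varPsi})\le 4/(2/u+\abs{\varPsi})$ together with $\abs{q+\varPsi}\le q+\abs{\varPsi}$ and condition~\ref{item:assumption}.
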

Recall that \(n\sbra{h(X_t)}=1\) for all \(t>0\); see
Lemma~\ref{Lem:harmonic}.
\begin{proof}[Proof of Lemma~\ref{Lem:integ-h-}]
  We write \(\hat{h}(x)=h(-x)\).
  By the formula~\eqref{eq:formula-duality}
  and by~\eqref{eq:-qT_0},
  we have
  \begin{align}
    \int_0^\infty \ce^{-qt} n\sbra{\hat{h}(X_t)}\,\cd t
     & = \int_\bR \hat{h}(x)\widehat{\bP}_x\sbra{\ce^{-qT_0}}\,\cd x           \\
     & = \int_\bR \hat{h}(x)\frac{r_q(x)}{r_q(0)}\,\cd x.\label{eq:h-hat-inte}
  \end{align}
  By~\cite[(3.20)]{MR3689384}, the equation~\eqref{eq:h-hat-inte} is finite.
  (Note that the assumptions in~\cite{MR3689384} is stronger,
  but this remains true since its proof is valid if Lemma~\ref{Lem:h} holds.)
  Hence, for almost any \(t>0\),
  it holds that \(n\sbra{\hat{h}(X_t)}<\infty\).
  Thus for any \(t>0\), there exists \(0<s<t\) such that
  \(n\sbra{\hat{h}(X_s)}<\infty\).
  By the Markov property of the excursion measure \(n\), we have
  \begin{align}
    n\sbra{\hat{h}(X_t)}
     & = n\sbra{\bP_{X_s}\sbra{\hat{h}(X_{t-s}); T_0>t-s}}                   \\
     & \le  n\sbra{\bP_{X_s}\sbra{\hat{h}(X_{t-s})}}                         \\
     & = n\sbra{\widetilde{\bP}_{0}\sbra{\hat{h}(X_s+\widetilde{X}_{t-s})}},
  \end{align}
  where the symbol \(\widetilde{\hspace{12pt}}\) means independence.
  Since \(\hat{h}\) is subadditive,
  we obtain
  \begin{align}
    n\sbra{\hat{h}(X_t)}\le
    n\sbra{\hat{h}(X_s)}+\bP_{0}\sbra{\hat{h}(X_{t-s})}.
  \end{align}
  By Tsukada~\cite[Proof of Theorem 15.2]{MR3838874}
  (see also Takeda--Yano~\cite[Lemma 4.3]{me}),
  we have \(\bP_{0}\sbra{\hat{h}(X_{t-s})}<\infty\).
  Consequently, it holds that
  \(n\sbra{\hat{h}(X_t)}<\infty\) for all \(t>0\).
\end{proof}

For the proof of Theorem~\ref{Thm:limit-P0},
we introduce the following lemma, whose proof
is in~\cite[Lemmas 3.4 and 6.2]{me}.
\begin{Lem}[\cite{me}]\label{Lem:hitting-h-repre}
  For \(a,b\in \bR\setminus\cbra{0}\) and \(a\ne b\), it holds that
  \begin{align}
     & h^B(a)\coloneqq  \bP_0\sbra{L_{T_a}}
    = h(a)+h(-a),                                                \\
     & h^B(a)\bP_x(T_a<T_0)
    = h(x)+h(-a)-h(x-a),                                         \\
     & \bP_0\sbra{L_{T_{\cbra{a,-b}}}}\bP_x(T_{\cbra{a,-b}}<T_0) \\
     & =h(x) + \frac{1}{h^B(a+b)}
    \cbra*{\begin{multlined}
        \rbra[\big]{h(-a)-h(x-a)}h(a+b)
        + \rbra[\big]{h(b)-h(x+b)}h(-a-b)\\
        -\rbra[\big]{h(a)-h(-b)}\rbra[\big]{h(-a)-h(x-a)-h(b)+h(x+b)}
      \end{multlined}}.
  \end{align}
\end{Lem}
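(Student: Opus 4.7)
The plan is to derive all three identities from the strong Markov property applied to $q$-Laplace transforms of hitting times, together with $\bP_x\sbra{\ce^{-qT_0}}=r_q(-x)/r_q(0)$ (equation~\eqref{eq:-qT_0}), the definition $h_q(x)=r_q(0)-r_q(-x)$, and the limit $h(x)=\lim_{q\to 0+}h_q(x)$ from Lemma~\ref{Lem:h}. In each case I set up a finite linear system in Laplace transforms, solve it, substitute $r_q(y)=r_q(0)-h_q(-y)$, and let $q\to 0+$, exploiting recurrence in the form $r_q(0)\to\infty$ to kill the correction terms of order $1/r_q(0)$.

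For the first identity, the local-time normalization~\eqref{eq:regularity-of-L} gives $\bP_0\sbra{\int_0^\infty\ce^{-qt}\,\cd L_t}=r_q(0)$. Splitting this integral at $T_a$ and using the strong Markov property yields
\begin{align*}
\bP_0\sbra*{\int_0^{T_a}\ce^{-qt}\,\cd L_t}
= r_q(0)-\bP_0\sbra{\ce^{-qT_a}}\,r_q(-a)
= r_q(0)-\frac{r_q(a)\,r_q(-a)}{r_q(0)},
\end{align*}
since $\bP_0\sbra{\ce^{-qT_a}}=\bP_{-a}\sbra{\ce^{-qT_0}}=r_q(a)/r_q(0)$ by spatial homogeneity. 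Substituting $r_q(\pm a)=r_q(0)-h_q(\mp a)$ collapses the right side to $h_q(a)+h_q(-a)-h_q(a)h_q(-a)/r_q(0)$, and monotone convergence on the left combined with $r_q(0)\to\infty$ gives $\bP_0\sbra{L_{T_a}}=h(a)+h(-a)$.

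For the second identity, set $A=\bP_x\sbra{\ce^{-qT_{\cbra{0,a}}};X_{T_{\cbra{0,a}}}=0}$ and $B=\bP_x\sbra{\ce^{-qT_{\cbra{0,a}}};X_{T_{\cbra{0,a}}}=a}$. Strong Markov at $T_{\cbra{0,a}}$ applied in turn to $\ce^{-qT_0}$ and $\ce^{-qT_a}$ gives the $2\times 2$ system
\begin{align*}
\frac{r_q(-x)}{r_q(0)}=A+B\,\frac{r_q(-a)}{r_q(0)},\qquad
\frac{r_q(a-x)}{r_q(0)}=A\,\frac{r_q(a)}{r_q(0)}+B.
\end{align*}
Solving for $B$ and substituting $r_q(y)=r_q(0)-h_q(-y)$ yields
\begin{align*}
B=\frac{r_q(0)\sbra*{h_q(x)+h_q(-a)-h_q(x-a)}-h_q(x)h_q(-a)}{r_q(0)\sbra*{h_q(a)+h_q(-a)}-h_q(a)h_q(-a)}.
\end{align*}
As $q\to 0+$, $B\to\bP_x(T_a<T_0)$ by monotone convergence and the ratio tends to $[h(x)+h(-a)-h(x-a)]/[h(a)+h(-a)]$, which together with the first identity is the second.

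For the third identity I repeat the program with the three-point set $\cbra{0,a,-b}$. Writing $A,B,C$ for the probability-weighted $q$-transforms associated with the three possible values of $X_{T_{\cbra{0,a,-b}}}$, strong Markov applied to $\ce^{-qT_0},\ce^{-qT_a},\ce^{-qT_{-b}}$ produces a $3\times 3$ linear system whose entries are $r_q(\cdot)/r_q(0)$ values at pairwise differences of $0,a,-b,x$. Cramer's rule yields $B+C\to\bP_x(T_{\cbra{a,-b}}<T_0)$ as $q\to 0+$, while $\bP_0\sbra{L_{T_{\cbra{a,-b}}}}$ is obtained exactly as in the first identity by splitting $\int_0^\infty\ce^{-qt}\,\cd L_t$ at $T_{\cbra{a,-b}}$ and feeding in the $x=0$ values of $B,C$. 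Multiplying the two, substituting $r_q(y)=r_q(0)-h_q(-y)$, and letting $q\to 0+$ reproduces the displayed expression, the denominator collapsing to $h^B(a+b)=h(a+b)+h(-a-b)$ by the same mechanism that produced $h^B(a)$ in the second identity. The main obstacle is the bookkeeping in this last step: one must carefully track which cross-terms remain (the linear-in-$h$ survivors) and which are killed by an extra $1/r_q(0)$ factor, then reorganize the survivors into the four-product form on the right-hand side.
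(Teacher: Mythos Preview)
The paper does not supply its own proof of this lemma: it simply states that the proof is in~\cite[Lemmas 3.4 and 6.2]{me} and uses the result as input to Theorem~\ref{Thm:limit-P0}. So there is no in-paper argument to compare against line by line.

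That said, your approach is correct and is the standard route to these identities (and almost certainly the one taken in the cited reference). The mechanism---set up a linear system for the joint Laplace transforms $\bP_x\sbra{\ce^{-qT_{S}};X_{T_S}=y}$ over the finite set $S$ via the strong Markov property, solve, rewrite every $r_q(y)$ as $r_q(0)-h_q(-y)$, and let $q\to 0+$ using recurrence ($r_q(0)\to\infty$) to kill the $O(1/r_q(0))$ cross-terms---is exactly how such formulas are obtained; your first two computations check out in detail. Two small remarks on the write-up: in the first identity you should note that $X_{T_a}=a$ on $\cbra{T_a<\infty}$ (single points are hit exactly under~\ref{item:assumption}), which is what justifies $\bP_0\sbra{\ce^{-qT_a}\int_{0}^{\infty}\ce^{-qs}\,\cd L_s\circ\theta_{T_a}}=\bP_0\sbra{\ce^{-qT_a}}\,r_q(-a)$; and in the third identity it is cleaner not to compute $\bP_0\sbra{L_{T_{\cbra{a,-b}}}}$ and $\bP_x(T_{\cbra{a,-b}}<T_0)$ separately and multiply, but to observe that the \emph{product} is exactly the $r_q(0)$-leading coefficient of the numerator of $B+C$ in your $3\times 3$ Cramer solution (the denominator's leading coefficient being $\bP_0\sbra{L_{T_{\cbra{a,-b}}}}$ by the same computation as your first identity), which keeps the bookkeeping under control.
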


\begin{proof}[Proof of Theorem~\ref{Thm:limit-P0}]
  For \(s>0\),
  we define \(d_s=\inf\cbra{u>s\colon X_u=0}\).
  We also define \(G=\cbra{g_s\colon g_s\ne d_s, s>0}\).

  \noindent (\ref{Thm-item:limit-P0-exp})
  For any \(q>0\), we have
  \begin{align}
    \bP_0\sbra{F_t\circ k_{\bm{e}_q-g_{\bm{e}_q}}\circ \theta_{g_{\bm{e}_q}}}
     & =\bP_0\sbra*{\int_0^\infty q\ce^{-qu}F_t\circ k_{u-{g_u}}\circ \theta_{g_u}\,
    \cd u}                                                                           \\
     & = \bP_0\sbra*{\sum_{s\in G}\ce^{-qs} \int_s^{d_s} q\ce^{-q(u-s)}F_t
      \circ k_{u-s}\circ \theta_{s}\, \cd u}.
  \end{align}
  Using the compensation formula in excursion theory (see e.g.,
  Bertoin~\cite[Corollary IV.11]{MR1406564}), we obtain
  \begin{align}
    \bP_0\sbra{F_t\circ k_{\bm{e}_q-g_{\bm{e}_q}}\circ \theta_{g_{\bm{e}_q}}}
    =\bP_0 \sbra*{\int_0^\infty \ce^{-qs}\, \cd L_s}
    n\sbra*{\int_0^{T_0} q\ce^{-qu} F_t 1_{\cbra{u>t}}\, \cd u}.
  \end{align}
  By~\eqref{eq:regularity-of-L} and the Markov property of the excursion measure \(n\),
  it holds that
  \begin{align}
    \bP_0\sbra{F_t\circ k_{\bm{e}_q-g_{\bm{e}_q}}\circ \theta_{g_{\bm{e}_q}}}
     & = r_q(0) n\sbra{F_t; t<\bm{e}_q<T_0}                       \\
     & = r_q(0)\ce^{-qt}n\sbra{F_t \bP_{X_t}\sbra{T_0>\bm{e}_q}}.
  \end{align}
  It follows from~\eqref{eq:-qT_0} that
  \begin{align}
    \bP_{X_t}\sbra{T_0>\bm{e}_q}=1-\bP_{X_t}\sbra{\ce^{-qT_0}}
    = 1-\frac{r_q(-X_t)}{r_q(0)}=\frac{h_q(X_t)}{r_q(0)}.
  \end{align}
  Hence we have
  \begin{align}
    \bP_0\sbra{F_t\circ k_{\bm{e}_q-g_{\bm{e}_q}}\circ \theta_{g_{\bm{e}_q}}}
    = \ce^{-qt}n\sbra{F_t h_q(X_t)}.
  \end{align}
  By~\eqref{eq:h_q},~\eqref{eq:theta} and (\ref{Lem-item:h^S-repre}) of
  Lemma~\ref{Lem:h},
  it holds that
  \begin{align}
    h_q(X_t) & \le h_q(X_t)+h_q(-X_t)
    =\frac{2}{\pi}\int_0^\infty
    \Re\rbra*{\frac{1-\cos \lambda x}{q+\varPsi(\lambda)}} \, \cd \lambda \\
             & \le \frac{2}{\pi}\int_0^\infty
    \Re\rbra*{\frac{1-\cos \lambda x}{\varPsi(\lambda)}} \, \cd \lambda
    = h(X_t)+h(-X_t).
  \end{align}
  By Lemmas~\ref{Lem:harmonic} and~\ref{Lem:integ-h-},
  the function
  \(h(X_t)+h(-X_t)\) is integrable with respect to the measure \(n\).
  Thus we may apply the dominated convergence theorem
  to deduce
  \begin{align}
    \lim_{q\to 0+}\bP_0\sbra{F_t\circ k_{\bm{e}_q-g_{\bm{e}_q}}\circ \theta_{g_{\bm{e}_q}}}
    = \lim_{q\to 0+}\ce^{-qt}n\sbra{F_t h_q(X_t)}
    =n\sbra{F_t h(X_t)}
    = \bP_0^{(0)}\sbra{F_t}.
  \end{align}

  \noindent (\ref{Thm-item:limit-P0-hitting})
  For \(a\in \bR\setminus\cbra{0}\), we have
  \begin{align}
    \bP_0\sbra{F_t\circ k_{{T_a}-g_{{T_a}}}\circ \theta_{g_{{T_a}}}}
    =\bP_0\sbra*{\sum_{s\in G} 1_{\cbra{s<T_a<d_s}}
      F_t\circ k_{{T_a}-s}\circ \theta_{s}}.
  \end{align}
  Using the compensation formula in excursion theory,
  the Markov property of the excursion measure \(n\)
  and Lemma~\ref{Lem:hitting-h-repre},
  it holds that
  \begin{align}
    \bP_0\sbra{F_t\circ k_{{T_a}-g_{{T_a}}}\circ \theta_{g_{{T_a}}}}
     & =\bP_0\sbra*{\int_0^{T_a}\, \cd L_s}
    n\sbra{F_t; t<T_a<T_0}                                      \\
     & = \bP_0\sbra{L_{T_a}}n\sbra{F_t\bP_{X_t}(T_a<T_0);t<T_a} \\
     & = n\sbra{F_t ( h(-a)+h(X_t)-h(X_t-a)); t<T_a}.
  \end{align}
  Since \(h\) is subadditive,
  we have
  \begin{align}
    h(-a)+h(X_t)-h(X_t-a)\le h(X_t)+h(-X_t),
  \end{align}
  which is integrable with respect to the measure \(n\).
  Thus we may apply the dominated convergence theorem to deduce
  \begin{align}
    \lim_{a\to\pm\infty}
    \bP_0\sbra{F_t\circ k_{{T_a}-g_{{T_a}}}\circ \theta_{g_{{T_a}}}}
    = n\sbra{F_t h^{(\pm 1)}(X_t)}
    =\bP_0^{(\pm 1)}\sbra{F_t},
  \end{align}
  here we used (\ref{Lem-item:h-diff-infty}) of Lemma~\ref{Lem:h}.

  (\ref{Thm-item:limit-meas-twohitting})
  By the same discussion as the proof of (\ref{Thm-item:limit-P0-hitting}),
  it holds that
  \begin{align}
    \bP_0\sbra{F_t\circ k_{{T_{\cbra{a,-b}}}-g_{T_{\cbra{a,-b}}}}
    \circ \theta_{g_{T_{\cbra{a,-b}}}}}
    = \bP_0\sbra{L_{T_{\cbra{a,-b}}}}n\sbra{F_t\bP_{X_t}(T_{\cbra{a,-b}}<T_0)
    ;t<T_{\cbra{a,-b}}}.
  \end{align}
  By Lemma~\ref{Lem:hitting-h-repre} and by the dominated convergence theorem,
  we obtain the desired result. (We omit the details.)
\end{proof}

\noindent\textbf{Acknowledgements}
The author would like
to express his deep gratitude to Professor Kouji Yano
for his helpful advice and encouragement.
This work was supported by
JSPS Open Partnership Joint Research Projects grant no. JPJSBP120209921.

\noindent\textbf{Data Availability}
Data sharing is not applicable to this article as no datasets
were generated or analyzed during the current study.

\section*{Declarations}
\textbf{Conflict of interest}
The authors have no competing interests to declare that are relevant to the content of
this article.

\end{document}